\newtheorem{thm}{Theorem}[section]
\newtheorem{cor}[thm]{Corollary}
\newtheorem{lem}[thm]{Lemma}
\newtheorem{prop}[thm]{Proposition}
\theoremstyle{definition}
\newtheorem{defin}[thm]{Definition}
\newtheorem{rem}[thm]{Remark}
\numberwithin{equation}{section}
\begin{document}


\baselineskip=17pt


\title{Some extensions of the Brouwer fixed point theorem}

\author[J.~Mai]{Jiehua Mai}
\address{School of Mathematics and Quantitative
Economics, Guangxi University of Finance and Economics, Nanning, Guangxi, 530003, P. R. China \&
 Institute of Mathematics, Shantou University, Shantou, Guangdong, 515063, P. R. China}
\email{jiehuamai@163.com; jhmai@stu.edu.cn}

\author[E.~Shi]{Enhui Shi}
\thanks{*Corresponding author}
\address{School of Mathematics and Sciences, Soochow University, Suzhou, Jiangsu 215006, China}
\email{ehshi@suda.edu.cn}

\author[K.~Yan]{Kesong Yan*}
\address{School of Mathematics and Statistics, Hainan Normal
University, Haikou, Hainan, 571158, P. R. China}
\email{ksyan@mail.ustc.edu.cn}

\author[F.~Zeng]{Fanping Zeng}
\address{School of Mathematics and Quantitative
Economics, Guangxi University of Finance and Economics, Nanning, Guangxi, 530003, P. R. China}
\email{fpzeng@gxu.edu.cn}

\begin{abstract}
We study the existence of fixed points for continuous maps $f$ from an $n$-ball $X$ in $\mathbb R^n$ to $\mathbb R^n$ with $n\geq 1$.
We show that $f$ has a fixed point if, for some absolute retract
$Y\subset\partial X$, $f(Y)\subset X$  and $\partial X-Y$ is an $(f, X)$-blockading set. For $n\geq 2$, let $D$ be an $n$-ball in $X$ and
$Y$ be an $(n-1)$-ball in $\partial X$. Relying on the result just mentioned,
we show the existence of a fixed point of $f$, if $D$ and $Y$ are well placed and behave well under  $f$, and ${\rm deg}(f_D)=-{\rm deg}(f_{\partial Y})$, where $f_D=f|D: D
\rightarrow \mathbb{R}^n$ and $f_{\partial Y}=f|\partial Y: \partial Y \rightarrow \partial Y$.
The degree ${\rm deg}(f_D)$ of $f_D$ is explicitly defined and some elementary properties of which
are investigated. These results extend the  Brouwer fixed point theorem.
\end{abstract}

\keywords{Brouwer fixed point theorem, fixed point, sphere, ball, retract, degree of map}
\subjclass[2010]{55M20, 55M25, 54H20}

\maketitle

\pagestyle{myheadings} \markboth{}{}

\section{Introduction}

In 1912, Brouwer obtain the
following fixed point theorem  by proving that homotopy mappings on a sphere have the same degree \cite{Bro12a}.
\begin{thm}[\cite{Bro12a}]\label{Brouwer}
Every continuous map $f$ from an $n$-ball $X$ to itself always has a fixed point\;\!.
\end{thm}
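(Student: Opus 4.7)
The plan is to argue by contradiction and reduce the statement to the impossibility of retracting an $n$-ball onto its boundary, which itself rests on the homotopy invariance of degree on the sphere---the ingredient emphasized just before the theorem. Since every $n$-ball is homeomorphic to the standard closed unit ball $D^n\subset\mathbb R^n$ and the existence of a fixed point is preserved under homeomorphism, I may assume $X=D^n$.

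Suppose for contradiction that $f\colon X\to X$ has no fixed point, so $f(x)\neq x$ for every $x\in X$. I would then construct a continuous retraction $r\colon X\to\partial X$ by sending each $x$ to the unique point of $\partial X$ on the ray that starts at $f(x)$, passes through $x$, and continues beyond $x$. Concretely, writing $r(x)=x+t(x)\bigl(x-f(x)\bigr)$ with $t(x)\geq 0$ determined by $\|r(x)\|=1$ shows that $r$ depends continuously on $x$ (the relevant root of the resulting quadratic in $t$ varies continuously with $f(x)$ and $x$), and the construction forces $r(x)=x$ whenever $x\in\partial X$, so $r$ is indeed a retraction.

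With $r$ in hand, the contradiction is immediate. The inclusion $\iota\colon\partial X\hookrightarrow X$ is null-homotopic via the straight-line contraction $H(x,s)=(1-s)x$, so the composition $r\circ H$ is a homotopy between $\mathrm{id}_{\partial X}=r\circ\iota$ and the constant map $x\mapsto r(0)$, viewed as self-maps of $\partial X$. By the homotopy invariance of degree invoked in the sentence preceding the theorem, these two maps must share the same degree; but $\deg(\mathrm{id}_{\partial X})=1$ while every constant self-map of $\partial X$ has degree $0$, a contradiction.

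I expect the main obstacle to be the degree-theoretic step rather than the geometric construction: one must develop enough machinery on the sphere $\partial X=S^{n-1}$ to establish both the homotopy invariance of degree and the nontriviality $\deg(\mathrm{id}_{\partial X})=1$, and this is the substantive topological content Brouwer contributed. By contrast, the construction and continuity of $r$ are elementary. For the base case $n=1$ I would bypass degree theory entirely and apply the intermediate value theorem to $g(x)=f(x)-x$ on the compact interval $X$.
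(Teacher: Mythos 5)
The paper does not prove this theorem at all: it is quoted as Brouwer's classical result, with the introduction merely noting that Brouwer derived it from the fact that homotopic self-maps of a sphere have equal degree. Your argument is the standard (and correct) no-retraction proof, and it uses precisely that ingredient, so it is entirely consistent with the route the paper attributes to Brouwer. The construction of $r$, the null-homotopy $H(x,s)=(1-s)x$ giving $\mathrm{id}_{\partial X}\simeq\mathrm{const}$, and the degree comparison $1\neq 0$ are all sound; the separate treatment of $n=1$ by the intermediate value theorem is a reasonable precaution since $S^0$ is disconnected. The only step a careful reader would want spelled out is why $r(x)=x$ for $x\in\partial X$: parametrizing the line as $x+t\bigl(x-f(x)\bigr)$, the quadratic $\|x+t(x-f(x))\|^2=1$ has roots with product $(\|x\|^2-1)/\|x-f(x)\|^2=0$ and the second root equals $-2\langle x,\,x-f(x)\rangle/\|x-f(x)\|^2\le 0$ by Cauchy--Schwarz (with equality forcing $f(x)=x$), so $t=0$ is indeed the unique nonnegative root. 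With that detail supplied, the proof is complete.
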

Nowadays, this elegant theorem has become one of the most well known
and useful principles in mathematics. It has a wide range of
applications in the fields of pure mathematics and applied
mathematics. Due to its importance in theory and application,
people's efforts to reprove or generalize this theorem have never
stopped for over a hundred years. For example, Knaster, Kuratowski
and Mazurkiewicz  gave a proof of the Brouwer fixed point theorem
based on Sperner's combinatorial lemma \cite{KKM29}. One may refer
to \cite{Hi76} and \cite{HW65} for the proofs based on differential
topology or algebraic topology. Milnor gave an analytic proof of
this theorem \cite{Mi78} and Rogers' proof only used advanced
calculus \cite{Rog80}. The Brouwer fixed point theorem was
generalized to continuous selfmaps of compact triangulable spaces
which has the same rational homology groups as a point \cite[p.
208]{Arm83} and to continuous selfmaps on compact convex subsets of
Banach spaces by Schauder \cite{Sch30} and of locally convex
topological vector spaces by Tychonoff \cite{Ty35}. Kakutani
generalized the theorem to multifunctions \cite{Ka41}, and
Glicksberg \cite{Gl52} and  Fan \cite{Fan52} further generalized
Kakutani's result to locally convex topological vector spaces. One
may consult \cite{Park99}  for a detailed introduction to all kinds
of equivalent formulations and extensions of the Brouwer fixed point
theorem. Some recent proofs and extensions of Theorem \ref{Brouwer}
can be found in \cite{ Bi04, Ma05, Mi02, Sub18}.
\medskip

 The following generalization
is mentioned in \cite{HL68}, which is an immediate consequence of
Brouwer's fixed point theorem and Bing's retraction \cite{Bi64}.
\begin{thm}[\cite{HL68}]\label{boundary}
Let $X$ be an $n$-ball in $\mathbb R^n$ and $f:X\rightarrow \mathbb R^n$ be continuous. If
$f(\partial X)\subset X$, then $f$ has a fixed point in $X$.
\end{thm}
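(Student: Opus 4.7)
The statement is said to follow from Brouwer's theorem together with Bing's retraction, so my plan is to reduce to Theorem \ref{Brouwer} by post\-composing $f$ with a suitable retraction $r: \mathbb{R}^n \to X$ and then using the boundary hypothesis to upgrade fixed points of $r\circ f$ to fixed points of $f$.

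The first step is to invoke the retraction produced by Bing in \cite{Bi64}: for any topological $n$-ball $X\subset\mathbb{R}^n$ there is a continuous retraction $r:\mathbb{R}^n\to X$ with the property that $r(\mathbb{R}^n\setminus X)\subset\partial X$ (so points outside $X$ are sent to the boundary, while points of $X$ are fixed). This is exactly the feature that a nearest-point projection gives automatically for convex bodies, but which is nontrivial for a general $n$-ball because $X$ need not be convex; this is why Bing's theorem is needed rather than an ad hoc projection.

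The second step is to form the composition $g=r\circ f:X\to X$, which is continuous and maps $X$ into $X$. Theorem \ref{Brouwer} (Brouwer) then yields a point $x_0\in X$ with $g(x_0)=x_0$, i.e.\ $r(f(x_0))=x_0$.

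The final step is to verify $f(x_0)=x_0$ using the hypothesis $f(\partial X)\subset X$. I split into two cases: if $f(x_0)\in X$, then $r$ fixes $f(x_0)$, hence $x_0=r(f(x_0))=f(x_0)$ and we are done. Otherwise $f(x_0)\notin X$, so by the defining property of Bing's retraction $x_0=r(f(x_0))\in\partial X$; but then the hypothesis gives $f(x_0)\in f(\partial X)\subset X$, contradicting $f(x_0)\notin X$. Hence the second case is impossible and $x_0$ is the desired fixed point. The only step that is not entirely routine is the appeal to Bing's retraction, and since that is quoted from \cite{Bi64} the argument here is essentially bookkeeping.
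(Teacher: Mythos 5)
Your argument is correct and is exactly the route the paper has in mind: it cites this theorem as an immediate consequence of Brouwer's theorem and Bing's retraction, and the paper's proof of the generalization (Theorem \ref{main1'}) follows the same pattern of extending a retraction onto $X$ by the identity, composing with $f$, and comparing fixed point sets. The only detail you elide is that Bing's theorem gives a retraction of $\mathbb{R}^n-\{p\}$ onto $\partial X$, from which your $r$ is obtained by pasting with the identity on $X$; this is routine.
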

Here an {\it $n$-ball} $X$ in $\mathbb R^n$ means that $X$ is an embedding of the unit closed ball
$B^n$ of $\mathbb R^n$. We should note that the embedding $X$ of $B^n$ may be very wild as the Alexander horned sphere illustrated \cite[p.176]{HY61}.
\medskip

The aim of the paper is to extend Theorem \ref{boundary} to the case when $f(\partial X)\not\subset X$; that is, we try to find some
conditions which ensure the existence of a fixed point of $f$ even if $f$ only maps a part of $\partial X$ into $X$.
\medskip

In Section 3, we introduce the notion of blockading set.
Intuitively, for a  ball $X\subset\mathbb R^n$ and a continuous map
$f:X\rightarrow \mathbb R^n$, an $(f, X)$-blockading set $Y$ of
$\partial X$ will forbid some points of $X$ from passing across
$\partial X$, but points of $Y$ may be mapped outside $X$ by $f$
(see Definition \ref{block} for the details). Using this notion, we
obtain the following theorem.

\begin{thm} \label {main1} Let $X$ be an $n$-ball
in $\Bbb R^n$, $n \geq 1$, $Y\subset \partial X$ be an absolute
retract, and $f: X \rightarrow \mathbb{R}^n$ be a continuous map. If
$f(Y) \subset X$, and $\partial X-Y$ is an $(f, X)$-blockading set,
then $f$ has a fixed point in $X$.
\end{thm}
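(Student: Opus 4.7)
The strategy is to reduce Theorem \ref{main1} to Theorem \ref{boundary} (or directly to Brouwer's theorem) by composing $f$ with a retraction of $\mathbb R^n$ onto $X$. Since $X$ is an $n$-ball in $\mathbb R^n$, Bing's retraction theorem supplies a retraction $\rho:\mathbb R^n \to X$; one can arrange (and I will assume) that $\rho^{-1}(\operatorname{int} X) = \operatorname{int} X$, so that points of $\mathbb R^n \setminus X$ are sent into $\partial X$. Set $g = \rho \circ f : X \to X$. Brouwer's fixed point theorem then produces $x_0 \in X$ with $g(x_0) = x_0$, i.e.\ $\rho(f(x_0)) = x_0$.

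The next step is to argue that $x_0$ is in fact a fixed point of $f$, i.e.\ that $f(x_0) \in X$ (whence $\rho(f(x_0)) = f(x_0) = x_0$). Suppose for contradiction $f(x_0) \notin X$. The distinguishing property of $\rho$ then forces $x_0 \in \partial X$. The hypothesis $f(Y) \subset X$ rules out $x_0 \in Y$, so $x_0 \in \partial X - Y$, and the $(f,X)$-blockading hypothesis on $\partial X - Y$ should now supply the desired contradiction: intuitively, the blockading property prevents points of $X$ from ``crossing'' $\partial X$ through $\partial X - Y$, and in particular forbids a boundary fixed point of $\rho \circ f$ sitting in $\partial X - Y$ whose $f$-image lies outside $X$.

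The main obstacle, and the reason I would reread the Section~3 definition carefully, is exactly this last deduction. It is plausible that the definition of $(f,X)$-blockading is phrased so that the step is a short verification; if not, the absolute retract hypothesis on $Y$ offers an alternative route. Since $Y$ is an AR and closed in the metric space $\partial X$, there is a retraction $r:\partial X \to Y$. One can then replace $f$ on $\partial X - Y$ by $f \circ r$ and extend to a continuous $\widetilde f : X \to \mathbb R^n$ agreeing with $f$ on $Y$ and satisfying $\widetilde f(\partial X) = f(Y) \subset X$; Theorem \ref{boundary} produces a fixed point of $\widetilde f$, and the blockading hypothesis is then used to show that this fixed point lies in the locus where $\widetilde f = f$, hence is a fixed point of $f$. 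Either way the crux is converting the geometric blockading hypothesis into the exclusion of the boundary case $x_0 \in \partial X - Y$ with $f(x_0) \notin X$.
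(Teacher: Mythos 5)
Both of your routes have a genuine gap, and in both cases it is exactly the step you yourself flagged as the crux. In the first route you obtain a Brouwer fixed point $x_0$ of $g=\rho\circ f$, where $\rho$ retracts $\mathbb R^n$ onto $X$ with $\rho(\mathbb R^n-X)\subset\partial X$, and you reduce to excluding the case $x_0\in\partial X-Y$ with $f(x_0)\notin X$. But Definition \ref{block} constrains only a neighborhood of the \emph{preimage} $f^{-1}(\partial X-Y)$; it says nothing about points that merely lie in $\partial X-Y$, and this case cannot be excluded. Concretely, let $X=B^2$, let $Y$ be the closed upper semicircle, and let $f$ map $X$ homeomorphically onto a thin disk $T$ that starts at the origin, leaves $B^2$ through the upper semicircle, bends around outside $B^2$, and ends at $(0,-2)$, arranged so that $f(Y)$ stays near the origin and $f(0,-1)=(0,-2)$. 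Then $f(Y)\subset X$ and $f(X)\cap(\partial X-Y)=\emptyset$, so $\partial X-Y$ is automatically blockading, yet $(0,-1)$ is a fixed point of $\rho\circ f$ (for the radial retraction) and not of $f$. So the implication ``every fixed point of $g$ is a fixed point of $f$'' is false for your $g$, and Brouwer may hand you a spurious one. The second route fails earlier: the theorem does not assume $Y$ closed in $\partial X$ (an open $(n-1)$-ball in $\partial X$ is an absolute retract and is allowed; the paper's proof explicitly accommodates non-closed $Y$), and a non-closed subset of a Hausdorff space is never a retract of it, so the retraction $r:\partial X\to Y$ you invoke need not exist; even when it does, the final claim that the fixed point of $\widetilde f$ lies where $\widetilde f=f$ is again the unproved crux.

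The paper's proof places the retraction differently, and this is the missing idea. Since $Y=\partial X\cap\bigl((\mathbb R^n-X)\cup Y\bigr)$ is closed in $(\mathbb R^n-X)\cup Y$ and is an absolute retract, there is a retraction $\alpha:(\mathbb R^n-X)\cup Y\to Y$; one sets $\beta|_X=\mathrm{id}$, $\beta=\alpha$ off $X$, and $h=\beta\circ f$. Because $\beta$ sends $\mathbb R^n-X$ into $Y$ rather than into all of $\partial X$, every fixed point of $h$ is automatically a fixed point of $f$: if $h(y)=y$ and $f(y)\notin X$, then $y=\alpha(f(y))\in Y$, whence $f(y)\in f(Y)\subset X$, a contradiction. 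The blockading hypothesis is spent on an entirely different task, namely the continuity of $h$: $\beta$ is discontinuous precisely along $\partial X-Y$, and near any $x\in f^{-1}(\partial X-Y)$ the blockading neighborhood keeps $f$ inside $X$, where $\beta=\mathrm{id}$, so $h=f$ there. The two hypotheses thus do two separate jobs---one for continuity, one for identifying fixed points of $h$ with fixed points of $f$---rather than combining into the single exclusion argument you were seeking, and Bing's retraction theorem is not needed at all.
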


Noting that $\partial X-Y$ is always an $(f, X)$-blockading set provided that $f(X)\cap (\partial X-Y)=\emptyset$,
the following corollary is immediate.

\begin{cor} Let $X$ be an
$n$-ball in $\Bbb R^n$, $n \geq 1$, $Y\subset \partial X$ be an
absolute retract, and $f: X \rightarrow \mathbb{R}^n$ be a
continuous map. If $f(Y) \subset X$, and $f(X) \cap (\partial
X-Y)=\emptyset$, then $f$ has a fixed point.
\end{cor}

In Section 2, for an injective map $f$ from an $n$-ball $D$ to
$\mathbb R^n$  which contains $D$ or to another $n$-ball $Y$ which
contains $D$ and is contained in a topological space, we introduce
the notion of degree of $f$, written by ${\rm deg}(f)$. This is a
supplement of several classical definitions of degree of a map. Some
elementary properties of this notion are investigated. Specially, it
can be shown that ${\rm deg}(f)\in\{1, -1\}$ for the definitions
here.
\medskip

In Section 4, relying on Theorem \ref{main1} and the notions of degree mentioned above, we obtain the following results.

\begin{thm} \label{main2} Let $X$ be an $n$-ball
in $\mathbb{R}^n$, $n \geq 2$, $Y$ be an $(n-1)$-ball in $\partial
X$, and $D$ be an $n$-ball in $X$ such that $E\equiv
\partial D \cap \partial X$ is an $(n-1)$-ball in
$\stackrel{\circ}Y$. Let $f: X \rightarrow \mathbb{R}^n$ be a
continuous map such that $f$ is bijective on $D$, and
$f^{-1}(\partial X-Y)=\stackrel{\circ}Y$. Let $f_D=f|D: D
\rightarrow \mathbb{R}^n$ and $f_{\partial Y}=f|\partial Y: \partial
Y \rightarrow \partial Y$. If
$\mathrm{deg}(f_D)=-\mathrm{deg}(f_{\partial Y})$,  then $f$ has a fixed point.
\end{thm}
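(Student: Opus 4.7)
The plan is to reduce Theorem~\ref{main2} to Theorem~\ref{main1}, applied with $Y$ itself taken as the absolute retract in $\partial X$. First I would check the two easier hypotheses of Theorem~\ref{main1}: the $(n-1)$-ball $Y$ is an absolute retract, and $f(Y)\subset X$. For the latter, split $Y=\stackrel{\circ}{Y}\cup\partial Y$; on $\stackrel{\circ}{Y}$ the assumption $f^{-1}(\partial X-Y)=\stackrel{\circ}{Y}$ gives $f(\stackrel{\circ}{Y})\subset\partial X-Y\subset\partial X\subset X$, while on $\partial Y$ the assumption that $f_{\partial Y}$ sends $\partial Y$ into $\partial Y$ gives $f(\partial Y)\subset\partial Y\subset X$.

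The heart of the proof is then to verify that $\partial X-Y$ is an $(f,X)$-blockading set in the sense introduced in Section~3. This is where the $n$-ball $D$, the bijectivity of $f_D$, and the sign relation $\mathrm{deg}(f_D)=-\mathrm{deg}(f_{\partial Y})$ all enter. My approach is to observe first that $f(D)$ is itself an $n$-ball, being a continuous injective image of $D$, and that its boundary sphere splits as $f(E)\cup f(\,\overline{\partial D-E}\,)$ with $f(E)\subset\partial X-Y$ (from $E\subset\stackrel{\circ}{Y}$ together with $f^{-1}(\partial X-Y)=\stackrel{\circ}{Y}$), while $f(\,\overline{\partial D-E}\,)$ is disjoint from $\partial X-Y$. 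Using the degree machinery developed in Section~2 for injective maps of $n$-balls, the sign of $\mathrm{deg}(f_D)$ encodes from which side the ball $f(D)$ meets $\partial X-Y$ along $f(E)$, whereas $\mathrm{deg}(f_{\partial Y})$ encodes how $f_{\partial Y}$ wraps $\partial Y$ onto itself. The requirement that these two signs be opposite is precisely what forces $f(D)$ to lie on the $X$-side rather than the exterior side of $\partial X-Y$, so that no point of $X$ can cross $\partial X$ through $\partial X-Y$ other than in the manner permitted for a blockading set.

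Once $\partial X-Y$ is seen to be $(f,X)$-blockading, Theorem~\ref{main1} immediately supplies a fixed point of $f$ in $X$. The main obstacle I anticipate is precisely this middle step: converting the algebraic identity $\mathrm{deg}(f_D)=-\mathrm{deg}(f_{\partial Y})$ into the topological control on $f$ demanded by the definition of blockading set. Carrying this out cleanly will likely combine the elementary properties of $\mathrm{deg}$ from Section~2 with a careful analysis of how $D$, $E$, and $\partial Y$ are nested inside $\partial X$, perhaps through a winding-number or local-degree argument at a test point in $\partial X-Y$ that exploits the decomposition $\partial X=Y\cup\overline{\partial X-Y}$ and the fact that on $\overline{\partial D-E}$ the map $f$ already takes values away from $\partial X-Y$.
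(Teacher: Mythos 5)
Your overall strategy is the same as the paper's: apply Theorem \ref{main1} with the $(n-1)$-ball $Y$ as the absolute retract and $\partial X-Y$ as the candidate blockading set, using the degree hypothesis to decide on which side of $\partial X$ the ball $f(D)$ lies. The verification that $f(Y)\subset X$ is correct. But the proposal has two genuine gaps, only the first of which you acknowledge. The first is that the implication
\[
\mathrm{deg}(f_D)=-\mathrm{deg}(f_{\partial Y})\ \Longrightarrow\ f(D-E)\subset\stackrel{\circ}X
\]
is the real content of the theorem, and your sketch does not establish it. In the paper this takes most of Section 4: Proposition \ref{degree-img} flattens an $(n-1)$-ball of $\partial X$ containing $E\cup f(E)$ into $\mathbb{R}^{n-1}\subset\mathbb{R}^n$ and uses Lemma \ref{ball-degree} to get $\mathrm{deg}(f_D)=\pm\mathrm{deg}(f_E)$ according to the side on which $f(D-E)$ lies; Proposition \ref{deg-int} then proves $\mathrm{deg}(f_E)=-\mathrm{deg}(f_{\partial Y})$ by composing $f$ with an orientation-reversing involution of $\partial X$ and a Bing retraction onto $Y$, producing a self-map of $(Y,\partial Y)$ whose degree is computed both on $\partial Y$ (Lemma \ref{deg-bound}) and on an interior ball (Lemma \ref{equivalence}). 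A ``winding-number or local-degree argument at a test point'' is not obviously available, because $\partial X$ may be wild (an Alexander horned sphere), which is precisely why the paper builds the pseudo-extension degree of Section 2 rather than working with ambient extensions.

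The second gap is that even after one knows $f(D)\subset X$, this only controls $f$ near the small ball $E\subset\stackrel{\circ}Y$, whereas Definition \ref{block} requires an open neighborhood of the \emph{entire} preimage $f^{-1}(\partial X-Y)=\stackrel{\circ}Y$ to be mapped into $X$. Your sketch says nothing about how the containment propagates from a neighborhood of $E$ to a neighborhood of all of $\stackrel{\circ}Y$. The paper supplies this in Theorem \ref{ball}: one builds a connected open neighborhood $U$ of $\stackrel{\circ}Y$ in $X$ with $U\cap\partial X=\stackrel{\circ}Y$ and $f(U-\stackrel{\circ}Y)\cap\partial X=\emptyset$; since $U-\stackrel{\circ}Y$ is connected and a nonempty piece of it (inside $\stackrel{\circ}D$) maps into $\stackrel{\circ}X$, all of $f(U-\stackrel{\circ}Y)$ lies in $\stackrel{\circ}X$. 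Without some such connectedness or propagation argument your verification of the blockading property is incomplete. (A minor inaccuracy along the way: $f\bigl(\overline{\partial D-E}\bigr)$ is not disjoint from $\partial X-Y$, since it contains $f(\partial E)\subset f(\stackrel{\circ}Y)\subset\partial X-Y$; only $f(\partial D-E)$ is.)
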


The following theorem is a corollary of Theorem \ref{main2}.

\begin{thm} Let $X$ be an $n$-ball
in $\mathbb{R}^n$, $n \geq 2$, $Y$ be an $(n-1)$-ball in $\partial
X$, and $f: X \rightarrow \mathbb{R}^n$ be a continuous injection
such that $f(Y)=\partial X-\stackrel{\circ}Y$. Let $f_{\partial
Y}=f|\partial Y: \partial Y \rightarrow \partial Y$. If
$\mathrm{deg}(f)=-\mathrm{deg}(f_{\partial Y})$, then $f$ has a
fixed point.
\end{thm}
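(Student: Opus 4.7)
The plan is to deduce this theorem from Theorem \ref{main2} by carving out a thin ``collar slab'' $D$ inside $X$.

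First, I would verify the boundary conditions required by Theorem \ref{main2}. Both $Y$ and $\partial X-\stackrel{\circ}Y$ are $(n-1)$-balls, and $f|Y$ is a continuous injection from the compact space $Y$ onto $\partial X-\stackrel{\circ}Y$, hence a homeomorphism of these two balls. Such a homeomorphism must carry $\partial Y$ onto $\partial(\partial X-\stackrel{\circ}Y)=\partial Y$ (so $f_{\partial Y}$ is indeed a well-defined self-map of $\partial Y$) and $\stackrel{\circ}Y$ onto $\stackrel{\circ}(\partial X-\stackrel{\circ}Y)=\partial X-Y$. The injectivity of $f$ on all of $X$ then upgrades $f(\stackrel{\circ}Y)=\partial X-Y$ to $f^{-1}(\partial X-Y)=\stackrel{\circ}Y$, one of the hypotheses of Theorem \ref{main2}.

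Second, I would construct $D$. Topologically $X\cong B^n$, so Brown's collar theorem supplies a collar $c:\partial X\times[0,1)\to X$. Choose any $(n-1)$-ball $E\subset\stackrel{\circ}Y$ and set $D:=c(E\times[0,\tfrac{1}{2}])$. Then $D$ is homeomorphic to $B^{n-1}\times[0,1]\cong B^n$, so $D$ is an $n$-ball in $X$, and by construction $\partial D\cap\partial X=c(E\times\{0\})=E$ is an $(n-1)$-ball in $\stackrel{\circ}Y$, exactly as Theorem \ref{main2} requires. Because $f$ is a continuous injection on $X$, $f|D$ is a continuous injection of an $n$-ball into $\mathbb{R}^n$, hence a bijection onto its image, so $\mathrm{deg}(f_D)$ is defined. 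Note that this step uses only the intrinsic topology of $X$ as a manifold with boundary, so it is insensitive to wild embeddings such as the Alexander horned one.

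Third, I would appeal to a restriction property of the degree established in Section 2: for a continuous injection $g:X\to\mathbb{R}^n$ and a sub-$n$-ball $D\subset X$ with the orientation inherited from $\mathbb{R}^n$, one has $\mathrm{deg}(g|D)=\mathrm{deg}(g)$. Applied to $g=f$, this gives $\mathrm{deg}(f_D)=\mathrm{deg}(f)=-\mathrm{deg}(f_{\partial Y})$, completing the list of hypotheses of Theorem \ref{main2}, which then produces a fixed point of $f$. The main obstacle is precisely this naturality/restriction property of the degree under passing to a sub-$n$-ball; once that elementary property is available from Section 2, the remainder is the collar construction and the straightforward unpacking of the assumption $f(Y)=\partial X-\stackrel{\circ}Y$.
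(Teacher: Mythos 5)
Your proposal is correct and follows essentially the same route as the paper: choose a sub-$n$-ball $D\subset X$ meeting $\partial X$ in an $(n-1)$-ball inside $\stackrel{\circ}Y$, note that injectivity of $f$ yields both bijectivity on $D$ and $f^{-1}(\partial X-\stackrel{\circ}Y)... $ er, $f^{-1}(\partial X-Y)=\stackrel{\circ}Y$, invoke the restriction property (Lemma \ref{restrict}) to get $\mathrm{deg}(f_D)=\mathrm{deg}(f)$, and apply Theorem \ref{main2'}. You supply more detail than the paper does (the collar construction of $D$ and the verification that $f|Y$ carries $\partial Y$ onto $\partial Y$), but the argument is the same.
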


\section{Degree of a map}
In this section, we will recall the classical definitions of degree of a map.
Then for an injective map $f$ from an $n$-ball $D$ to $\mathbb R^n$ containing $D$ or to
another ball $Y$ which contains $D$ and is contained in a topological space, we introduce the notion of degree of $f$ and investigate
some of their elementary properties. These will be used in establishing the main results.
\medskip

\subsection{Classical definitions of degree of a map}
For any $n\in\Bbb N$\;\!, \,let $\Bbb R^n$ be the
$n$\;\!-dimensional Euclidean space, let $\mathbf{0}$ denote the
origin of $\Bbb R^n$, and let $d_E$ be the Euclidean metric on $\Bbb
R^n$. For any $\mathbf{x} \in \Bbb R^n$, write
$\|\mathbf{x}\|=d_E(\mathbf{x},\textbf{0})$. For any $\mathbf{x} \in
\Bbb R^n$ and any $r>0$, write $B^n(\mathbf{x},r)=\{\mathbf{y} \in
\Bbb R^n:\|\mathbf{y}-\mathbf{x}\|\leq r\}$. Let
$B^n=B^n(\mathbf{0}, 1)$ and $S^{n-1}=\{\mathbf{y} \in \Bbb
R^n:\|\mathbf{y}\|=1\}$. The sets $B^n$ and $S^{n-1}$ are called the
{\it\textbf{unit $n$-ball}} \,and the {\it\textbf{unit
$(n\!-\!1)$-sphere}}\;\!,\, respectively. Note that $S^0=\{-1,1\}$
contains only two points. Each space homeomorphic to $B^n$
($B^n\!-S^{\,n-1}$, $S^{\,n-1}$, respectively) is called an
{\it\textbf{$n$-ball}} ({\it\textbf{open $n$-ball}},
{\it\textbf{$(n-1)$-sphere}}, respectively)\;\!. Each $1$-\;\!ball
is called an {\it\textbf{arc}}\;\!, \,each $1$-\;\!sphere is called
a {\it\textbf{circle}}\;\!, \,and each $2$\;\!-ball is called a
{\it\textbf{disk}}\;\!. Usually, the points $(r_1,\cdots,r_n)\in\Bbb
R^n$ and $(r_1,\cdots,r_n,0)\in\Bbb R^{n+1}$ are regarded to be the
same. This means that\;\! $\Bbb R^n=\Bbb R^n\times\{0\}\subset\Bbb
R^{n+1}$, \,$B^n=B^n\times\{0\}\subset B^{n+1}$ \,and
$S^{n-1}=S^{n-1}\times\{0\}\subset S^{n}$. For any $n$-ball $X$ and
any homeomorphism $h:B^n\to X$,\, write \,$\partial X=h(S^{\,n-1})$
\,and \,$\stackrel{\circ}X=X-\partial X$, \,called the
{\it\textbf{boundary}} \,and the {\it\textbf{interior}} \,of \,$X$ ,
respectively. Note that \,$\partial X$\;\! and \,$\stackrel{\circ}X$
are independent of the choice of the homeomorphism $h:B^n\to X$.
\medskip

Recall that if $\widetilde{H}_n(S)$ is the $n$-th reduced (singular) homology group
of an $n$-sphere $S$, then  $\widetilde{H}_n(S)\cong \mathbb Z$;
if $H_n(Y,\,\partial Y)$ is the $n$-th relative (singular) homology
group of the $n$-ball $Y$, then $H_n(Y,\,\partial Y)\cong \mathbb Z$.

\medskip
The following definition can be seen in \cite[p.288, p.293]{Mau80}.

\begin{defin}\label{class-deg}(1) Let $S$ be a $n$-sphere, $n \geq 0$,
$\widetilde{H}_n(S)$ be the $n$-th reduced (singular) homology group
of $S$ , and $f: S \rightarrow S$ be a continuous map. Then $f$
induces a homomorphism $f_*: \widetilde{H}_n(S) \rightarrow
\widetilde{H}_n(S)$, and there is an integer $\lambda_f$ such that
$f_*(\alpha)=\lambda_f \alpha$ for any $\alpha \in
\widetilde{H}_n(S)$. This integer $\lambda_f$ is called the {\it
\textbf{degree}} of the map $f$ and we write
$\mathrm{deg}(f)=\lambda_f$.

\vspace{3mm} (2) Let $Y$ be an $n$-ball, $n \geq 1$,
$H_n(Y,\,\partial Y)$ be the $n$-th relative (singular) homology
group of $(Y,\,\partial Y)$, and $f: (Y,\,\partial Y) \rightarrow
(Y,\,\partial Y)$ be a continuous map. Then $f$ induces a
homomorphism $f_*: H_n(Y,\,\partial Y) \rightarrow H_n(Y,\,\partial
Y)$, and there is an integer $\lambda_f$ such that
$f_*(\alpha)=\lambda_f \alpha$ for any $\alpha \in H_n(Y,
\partial Y)$. This integer is called the {\it \textbf{degree}} of
$f$ and we write $\mathrm{deg}(f)=\lambda_f$.

\medskip The following five lemmas are well known (see e.g.
\cite{Mau80}).

\begin{lem} \label{2.2} Let $S$ be a $n$-sphere,
$n \geq 0$, and $f, g: S \rightarrow S$ be continuous maps. Then
$\mathrm{deg}(f)=\mathrm{deg}(g)$ if and only if $f$ and $g$ are
homotopic; ${\rm deg}(f\circ g)={\rm deg}(f)\cdot {\rm deg}(g)$.
\end{lem}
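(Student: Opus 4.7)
Lemma 2.2 combines three classical assertions, and the plan is to dispatch them in order of increasing difficulty.

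First, the easy direction of the biconditional, \emph{homotopic implies equal degree}: if $H : S \times [0,1] \to S$ is a homotopy from $f$ to $g$, the homotopy invariance of singular homology yields $f_{*} = g_{*}$ on $\widetilde{H}_{n}(S)$, so by Definition 2.1(1) the two degrees coincide.

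Second, \emph{multiplicativity of degree}: functoriality of the induced map on homology gives $(f \circ g)_{*} = f_{*} \circ g_{*}$. Fixing a generator $\alpha$ of $\widetilde{H}_{n}(S) \cong \mathbb{Z}$, I would compute
\[
\deg(f\circ g)\,\alpha \;=\; (f\circ g)_{*}(\alpha) \;=\; f_{*}\bigl(g_{*}(\alpha)\bigr) \;=\; f_{*}\bigl(\deg(g)\,\alpha\bigr) \;=\; \deg(f)\deg(g)\,\alpha,
\]
so $\deg(f \circ g) = \deg(f)\deg(g)$.

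Third, and hardest, the converse of the biconditional, \emph{equal degree implies homotopic}. For $n=0$ the sphere $S^{0}$ has only four self-maps, which I would inspect by hand. For $n \geq 1$ the plan is to route through homotopy groups: because $S^{n}$ is $(n-1)$-connected, the Hurewicz theorem furnishes an isomorphism $h : \pi_{n}(S^{n}) \to H_{n}(S^{n};\mathbb{Z}) \cong \mathbb{Z}$ under which a self-map of degree $d$ represents the class $d\cdot[\mathrm{id}_{S^{n}}]$. Hence two self-maps of equal degree represent the same based homotopy class, and free homotopy follows from transitivity of the action of self-homeomorphisms of $S^{n}$ on $S^{n}$, which allows one to promote a based homotopy to a free one.

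The main obstacle is this last step, the Hopf classification theorem. An equivalent classical route proceeds via simplicial approximation followed by elementary homotopy moves on simplicial maps; I prefer the Hurewicz approach because we already have the homological framework in place from Definition 2.1, so the translation between $\pi_n(S^n)$ and $\widetilde{H}_n(S^n)$ is the only non-trivial input needed.
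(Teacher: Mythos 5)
The paper does not actually prove this lemma: it is listed among ``the following five lemmas are well known'' with only a citation to Maunder, so there is no argument of the paper's to compare yours against. Your three-part decomposition is the standard one. The first two parts (homotopy invariance of $f_*$ gives one direction of the biconditional; functoriality $(f\circ g)_*=f_*\circ g_*$ gives multiplicativity) are complete and correct. For the converse direction with $n\geq 1$, your route through the Hurewicz isomorphism is the usual proof of the Hopf degree theorem; the only care needed is the passage between based and free homotopy classes: one first slides $g$ by a free homotopy so that $f$ and $g$ agree at a basepoint, then applies the Hurewicz computation, and injectivity of the map from based to free classes holds because $\pi_1(S^n)$ is trivial for $n\geq 2$ and acts trivially on $\pi_n(S^n)$ for $n=1$. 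Your phrase about ``promoting a based homotopy to a free one'' has this backwards --- a based homotopy already is a free homotopy; the real work is in basing the maps --- but the idea is recoverable.

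The one genuine gap is the case $n=0$, which you defer to inspection: inspection in fact refutes the statement there. The two constant maps $S^0\to S^0$ (one with value $1$, one with value $-1$) both induce the zero map on $\widetilde{H}_0(S^0)\cong\mathbb{Z}$ and hence both have degree $0$, yet they are not homotopic, since any homotopy restricted to $\{x\}\times[0,1]$ maps a connected set into the discrete space $S^0$ and is therefore constant. So ``equal degree implies homotopic'' fails for $n=0$; this is a defect of the lemma as stated (the Hopf classification is a theorem for $n\geq 1$ only) rather than of your $n\geq 1$ argument, and it is harmless for the paper, which only ever invokes the easy direction and multiplicativity for $0$-spheres. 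You should either restrict the converse to $n\geq 1$ or record the counterexample explicitly instead of asserting that the case goes through by hand.
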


\begin{lem} \label{deg-bound} Let $Y$ be an $n$-ball,
$n \geq 1$, and $f: (Y, \partial Y) \rightarrow (Y, \partial Y)$ be
a continuous map. Then $\mathrm{deg}(f)=\mathrm{deg}(f|\partial
Y)$.
\end{lem}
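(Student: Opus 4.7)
The strategy is a standard application of the long exact sequence of the pair $(Y,\partial Y)$ together with naturality of the connecting homomorphism.

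First, since the $n$-ball $Y$ is contractible, $\widetilde{H}_k(Y) = 0$ for all $k \geq 0$. The relevant portion of the long exact sequence of the pair (in reduced singular homology) reads
$$\widetilde{H}_n(Y) \longrightarrow H_n(Y,\partial Y) \xrightarrow{\ \partial_*\ } \widetilde{H}_{n-1}(\partial Y) \longrightarrow \widetilde{H}_{n-1}(Y),$$
and the two outer groups vanish. Hence the connecting homomorphism $\partial_*$ is an isomorphism $\mathbb{Z} \to \mathbb{Z}$ (valid also in the case $n=1$, where $\partial Y = S^0$ and $\widetilde{H}_0(S^0) \cong \mathbb{Z}$, which is precisely the convention used in Definition \ref{class-deg}(1)).

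Next I would apply naturality of $\partial_*$ to the map of pairs $f:(Y,\partial Y) \to (Y,\partial Y)$. This produces a commutative square whose top and bottom arrows are $\partial_*$ and whose vertical arrows are $f_*$ on $H_n(Y,\partial Y)$ and $(f|\partial Y)_*$ on $\widetilde{H}_{n-1}(\partial Y)$. Both vertical maps are multiplication by an integer on $\mathbb{Z}$, and commutativity together with the invertibility of the horizontal arrows $\partial_*$ forces those two integers to coincide. By Definition \ref{class-deg}, this integer is $\deg(f)$ on the left and $\deg(f|\partial Y)$ on the right, which is exactly the conclusion.

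There is no substantive obstacle here; the argument is a pure diagram chase once the long exact sequence is written down. The only point that calls for a moment of care is the bookkeeping in the base case $n=1$, where one must take $\partial Y$ to be the $0$-sphere $S^0$ and track reduced rather than unreduced $H_0$, but the naturality argument goes through unchanged.
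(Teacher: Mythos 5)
Your argument is correct: the paper gives no proof of this lemma, simply citing it as well known from Maunder, and your derivation via the long exact sequence of the pair $(Y,\partial Y)$ in reduced homology and the naturality of the connecting homomorphism $\partial_*$ is exactly the standard textbook proof being invoked. Your attention to the case $n=1$ (using $\widetilde{H}_0(S^0)\cong\mathbb{Z}$, consistent with Definition \ref{class-deg}(1)) is the right bookkeeping and the argument goes through.
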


\begin{lem} Let $Y$ be an $n$-ball, $n
\geq 1$, and $f, g: (Y, \partial Y) \rightarrow (Y, \partial Y)$ be
continuous maps. Then $\mathrm{deg}(f)=\mathrm{deg}(g)$ if and only
if $f$ and $g$ are homotopic (that is, there exists a homotopy $F:(Y
\times [0,1], \partial Y \times [0,1]) \rightarrow (Y,
\partial Y)$ from $f$ to $g$).
\end{lem}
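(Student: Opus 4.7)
The plan is to establish the two directions separately. For the ``only if'' direction, a homotopy of pairs $F:(Y\times I,\,\partial Y\times I)\to(Y,\,\partial Y)$ between $f$ and $g$ induces the same homomorphism on $H_n(Y,\partial Y)$ by homotopy invariance of relative singular homology, so $\deg(f)=\deg(g)$ directly from Definition~\ref{class-deg}(2). This direction is purely formal.

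For the ``if'' direction, I assume $\deg(f)=\deg(g)$ and proceed in two reductions. First, Lemma~\ref{deg-bound} gives $\deg(f|\partial Y)=\deg(f)=\deg(g)=\deg(g|\partial Y)$, and since $\partial Y$ is an $(n-1)$-sphere, Lemma~\ref{2.2} supplies a homotopy $h_t:\partial Y\to\partial Y$ from $f|\partial Y$ to $g|\partial Y$. Second, I upgrade $h_t$ to a homotopy of pairs starting at $f$: since $(Y,\partial Y)$ is homeomorphic to the CW pair $(B^n,S^{n-1})$, it has the homotopy extension property, so applied to the map $f:Y\to Y$ and the homotopy $h_t$ of $f|\partial Y$, HEP yields a homotopy $F_t:Y\to Y$ with $F_0=f$ and $F_t|\partial Y=h_t\subset\partial Y$. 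Thus $f$ is homotopic, as a map of pairs, to $f':=F_1$, which agrees with $g$ on the boundary.

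With this reduction in hand, I fix any homeomorphism of pairs $\eta:(Y,\partial Y)\to(B^n,S^{n-1})$ and transport $f'$ and $g$ across it to obtain $\tilde f',\tilde g:(B^n,S^{n-1})\to(B^n,S^{n-1})$ that coincide on $S^{n-1}$. The straight-line homotopy
\begin{equation*}
\tilde G_t(x)=(1-t)\,\tilde f'(x)+t\,\tilde g(x)
\end{equation*}
lies in $B^n$ by convexity, and is constant in $t$ (hence still lies in $S^{n-1}$) when $x\in S^{n-1}$, so it is a homotopy of pairs. Pulling $\tilde G_t$ back by $\eta^{-1}$ produces a homotopy of pairs from $f'$ to $g$, and concatenating this with the earlier homotopy $f\simeq f'$ yields the required $F:(Y\times I,\partial Y\times I)\to(Y,\partial Y)$.

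The step that looks most delicate is the homotopy extension: one might initially worry that a wild embedding of $Y$ in $\mathbb{R}^n$ could obstruct extensions, but HEP is an intrinsic property of the topological pair $(Y,\partial Y)$, which is homeomorphic to the CW pair $(B^n,S^{n-1})$, so no embedding issue arises. Everything else is a routine bootstrapping of the sphere case already provided by Lemma~\ref{2.2}.
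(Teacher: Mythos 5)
Your argument is correct, but there is nothing in the paper to compare it against: the paper does not prove this lemma at all, listing it among ``the following five lemmas are well known (see e.g.\ \cite{Mau80})''. Your proof is the standard one and fills that gap cleanly. The ``only if'' direction is indeed immediate from homotopy invariance of relative homology. For the ``if'' direction, your three-step reduction --- pass to the boundary sphere via Lemma~\ref{deg-bound}, invoke the Hopf classification (Lemma~\ref{2.2}) to get a homotopy on $\partial Y$, extend it over $Y$ by the homotopy extension property of the CW pair $(B^n,S^{n-1})$, and finish with the straight-line homotopy in $B^n$ between maps agreeing on $S^{n-1}$ --- is exactly the textbook route, and your remark that HEP is intrinsic to the pair and hence immune to wild embeddings is the right thing to say.

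One caveat worth recording, though it is inherited from the statement rather than introduced by you: for $n=1$ the lemma as stated (and Lemma~\ref{2.2} at $n=0$, which you invoke for $\partial Y=S^0$) fails in the degree-zero case. The two constant maps of an arc $Y$ onto its two distinct endpoints are both maps of pairs of degree $0$, yet they are not homotopic through maps of pairs, since a homotopy would trace a path in the discrete space $\partial Y$. Your proof breaks at precisely the step ``Lemma~\ref{2.2} supplies a homotopy $h_t$'', which is where it should break. The statement is correct as written for $n\geq 2$, and for $n=1$ whenever the common degree is $\pm 1$; since the paper never actually uses this lemma downstream, nothing else is affected.
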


\begin{lem} Let $S$ and $T$ be two
$n$-spheres, $n \geq 0$, and $f: S \rightarrow S$ and $g: T
\rightarrow T$ be two continuous maps. If there exists a
homeomorphism $h: S \rightarrow T$ such that $g \circ h=h \circ f$,
then $\mathrm{deg}(f)=\mathrm{deg}(g)$.
\end{lem}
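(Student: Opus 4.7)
The plan is to use the functoriality of the reduced singular homology functor $\widetilde{H}_n$, together with the fact that the degree is defined as the multiplication constant on $\widetilde{H}_n \cong \mathbb{Z}$. First, I would apply $\widetilde{H}_n$ to the hypothesis $g \circ h = h \circ f$ to obtain the identity $g_* \circ h_* = h_* \circ f_*$ of group homomorphisms between $\widetilde{H}_n(S)$ and $\widetilde{H}_n(T)$. Since $h$ is a homeomorphism, $h_* : \widetilde{H}_n(S) \to \widetilde{H}_n(T)$ is an isomorphism; moreover both groups are isomorphic to $\mathbb{Z}$ by the recollection preceding Definition \ref{class-deg}.

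Next, I would fix a generator $\alpha$ of $\widetilde{H}_n(S)$ and set $\beta = h_*(\alpha)$, so that $\beta$ is a generator of $\widetilde{H}_n(T)$. By Definition \ref{class-deg}(1) we have $f_*(\alpha) = \mathrm{deg}(f)\cdot\alpha$ and $g_*(\beta) = \mathrm{deg}(g)\cdot\beta$. Applying $h_*$ to the first equality and using the commutativity $h_* f_* = g_* h_*$ derived above yields
\[
\mathrm{deg}(f)\cdot\beta \;=\; h_*\bigl(\mathrm{deg}(f)\cdot\alpha\bigr) \;=\; h_*f_*(\alpha) \;=\; g_*h_*(\alpha) \;=\; g_*(\beta) \;=\; \mathrm{deg}(g)\cdot\beta.
\]
Since $\beta$ is a generator of the free abelian group $\widetilde{H}_n(T) \cong \mathbb{Z}$, it is in particular non-torsion, and cancellation gives $\mathrm{deg}(f) = \mathrm{deg}(g)$.

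There is no genuine obstacle — the argument is a direct application of functoriality. The only subtle point worth noting is the boundary case $n = 0$, where $S^0 = \{-1,1\}$ consists of two points and the usual (unreduced) zeroth homology would be $\mathbb{Z}\oplus\mathbb{Z}$; however, the \emph{reduced} group $\widetilde{H}_0(S^0) \cong \mathbb{Z}$ is still cyclic, so the argument above applies verbatim without modification.
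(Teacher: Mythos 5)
Your proof is correct: the functoriality argument $g_*\circ h_*=h_*\circ f_*$ together with the fact that $h_*$ carries a generator of $\widetilde{H}_n(S)\cong\mathbb{Z}$ to a generator of $\widetilde{H}_n(T)$ immediately forces $\mathrm{deg}(f)=\mathrm{deg}(g)$, and your remark that the reduced group handles the $n=0$ case is exactly the right observation. The paper gives no proof of this lemma, merely citing it as well known from Maunder, and your argument is precisely the standard one that citation points to.
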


\begin{lem}\label{conj-deg} Let $V$ and $W$ be two
$n$-balls, $n \geq 1$, and $f: (V, \partial V) \rightarrow (V,
\partial V)$ and $g: (W, \partial W) \rightarrow (W, \partial W)$ be two continuous maps. If there
exists a homeomorphism $h: V \rightarrow W$ such that $g \circ h=h
\circ f$, then $\mathrm{deg}(f)=\mathrm{deg}(g)$.
\end{lem}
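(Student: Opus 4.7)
The plan is to use the naturality of the relative homology functor and the fact that a homeomorphism between $n$-balls is automatically a homeomorphism of pairs.

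First I would verify that $h$ carries $\partial V$ onto $\partial W$. This is the only non-formal step: since the boundary of an $n$-ball is a topological invariant (independent of the choice of the parametrizing homeomorphism with $B^n$, as noted in the excerpt), any homeomorphism between $n$-balls must map boundary to boundary. Concretely, if $\varphi : B^n \to V$ and $\psi: B^n \to W$ are parametrizations, then $\psi^{-1}\circ h \circ \varphi$ is a self-homeomorphism of $B^n$, and invariance of domain (or any standard characterization of $\partial B^n$) forces $S^{n-1}$ to be preserved; transporting back gives $h(\partial V)=\partial W$. Consequently $h$ is a homeomorphism of pairs $(V,\partial V)\to (W,\partial W)$.

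Next I would apply singular homology. The homeomorphism of pairs induces an isomorphism $h_\ast : H_n(V,\partial V)\to H_n(W,\partial W)$, and by functoriality the hypothesis $g\circ h = h\circ f$ yields the commutative identity $g_\ast\circ h_\ast = h_\ast\circ f_\ast$ between homomorphisms $H_n(V,\partial V)\to H_n(W,\partial W)$. Fix a generator $\alpha$ of $H_n(V,\partial V)\cong \mathbb Z$; then $h_\ast(\alpha)$ is a generator of $H_n(W,\partial W)\cong\mathbb Z$. By Definition~\ref{class-deg}(2) applied on each side we have
\[
h_\ast(f_\ast(\alpha)) \;=\; h_\ast(\mathrm{deg}(f)\,\alpha) \;=\; \mathrm{deg}(f)\cdot h_\ast(\alpha),
\]
\[
g_\ast(h_\ast(\alpha)) \;=\; \mathrm{deg}(g)\cdot h_\ast(\alpha).
\]
Since these two elements coincide and $h_\ast(\alpha)$ generates an infinite cyclic group, we conclude $\mathrm{deg}(f)=\mathrm{deg}(g)$.

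The only genuine obstacle is the boundary preservation of $h$; once that is in place, the argument is the standard naturality computation used in the proof of the sphere version (and is essentially the same idea that underlies the previous lemma in the excerpt). No special hypothesis on smoothness, orientability, or the wild/tameness of the embedding is needed, since everything takes place at the level of singular homology.
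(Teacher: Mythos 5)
Your proof is correct. The paper does not prove this lemma at all --- it is listed among five facts stated as ``well known (see e.g.\ \cite{Mau80})'' --- and your argument (topological invariance of the boundary, so $h$ is a homeomorphism of pairs, followed by the naturality computation $g_*\circ h_*=h_*\circ f_*$ on a generator of $H_n(V,\partial V)\cong\mathbb Z$) is exactly the standard justification the citation points to, so there is nothing to compare beyond noting that you have supplied the omitted details correctly.
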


\subsection{Extended definitions of degree of a map}

Let $f: \Bbb R^n \rightarrow \Bbb R^n$ be a continuous map, $n \geq
1$. Write $\Bbb R_{\,\infty}^{\,n}=\Bbb R^n \cup \{\infty\}$. Then
$\Bbb R_{\,\infty}^{\,n}$ is an $n$-ball. If $\lim_{\|x\|\rightarrow
\infty}f(x)=\infty$, then we say that $f$ is {\it
\textbf{$\infty$-extensible}}, and we can define a continuous map
$\widehat{f}: \Bbb R_{\,\infty}^{\,n} \rightarrow \Bbb
R_{\,\infty}^{\,n}$ by $\widehat{f}\,|\Bbb R^n=f$ and
$\widehat{f}(\infty)=\infty$, which is called the {\it
\textbf{natural extension}} of $f$.

\begin{defin}\label{inf-ext}
 Let $f: \Bbb R^n \rightarrow \Bbb R^n$ be an $\infty$-extensible
map with $n \geq 1$ and $\widehat{f}(\infty)=\infty$ be its natural extension.
Then we call the number $\mathrm{deg}(\widehat{f})$  the
{\it \textbf{degree}} of $f$, which is written by $\mathrm{deg}(f)$.
\end{defin}

 Let $f$ be a homeomorphism of an $n$-sphere $S$
with $n \geq 0$, or of an $n$-ball $Y$, or of $\mathbb{R}^n$ with $n
\geq 1$. If $\mathrm{deg}(f)=1$ (resp. $\mathrm{deg}(f)=-1$), then
$f$ is said to be {\it \textbf{orientation preserving}} (resp.
{\it\textbf{orientation reversing}}).
\end{defin}

\begin{rem}
Note that if $A$ is an
arc and $\partial A=\{u,v\}$, then $A$ is a $1$-ball, $\{u,v\}$ is a
$0$-sphere, and a bijection\, $f:\{u,v\}\to\{u,v\}$ is orientation
preserving (resp. orientation reversing) if and only if $f(u)=u$
(resp. $f(u)=v$)\;\!.
\end{rem}

\begin{defin} \label{bij} Let $X$ and $Y$ be two
topological spaces, $V \subset X$, and $f: X \rightarrow Y$ be a
map. If $f|V$ is injective, and $f^{-1}(f(V))=V$, then we say that
$f$ is {\it \textbf{bijective on}} $V$.
\end{defin}

Note that, in Definition \ref{bij}, if $f:X\rightarrow Y$ itself is injective,
then for any $V\subset X$, $f$ is always bijective on $V$.

\begin{lem}\label{equal} Let $f: \Bbb R^n
\rightarrow \Bbb R^n$ and $g: \Bbb R^n \rightarrow \Bbb R^n$ be two
$\infty$-extensible continuous maps, $n \geq 1$. If there exists an
$n$-ball $V$ in $\Bbb R^n$ such that $f|V=g|V$, and both $f$ and $g$
are bijective on $V$, then

\vspace{3mm} $(1)$\ $\mathrm{deg}(f)=\mathrm{deg}(g)$;

\vspace{3mm} $(2)$\ $\mathrm{deg}(f) \in \{1, -1\}$.
\end{lem}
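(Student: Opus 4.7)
The plan is to reduce both $\mathrm{deg}(\widehat f)$ and $\mathrm{deg}(\widehat g)$ to a common local degree at a single point in $V$, exploiting that $f$ and $g$ agree on $V$.

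First I would observe that, because $V$ is a compact $n$-ball and $f|V$ is a continuous injection, $f|V: V\to W:=f(V)$ is a homeomorphism, so $W$ is an $n$-ball; and since $f|V=g|V$, also $W=g(V)$ and $g|V$ is the same homeomorphism. The hypothesis that $f$ and $g$ are bijective on $V$ then gives $f^{-1}(W)=V=g^{-1}(W)$. Because $W$ is bounded in $\mathbb{R}^n$ while $\widehat f(\infty)=\widehat g(\infty)=\infty$, these identities pass to the natural extensions: $\widehat f^{-1}(W)=V=\widehat g^{-1}(W)$; and since $f|V$, being a homeomorphism between $n$-balls, carries manifold interior onto manifold interior (using invariance of domain), one in fact gets $\widehat f^{-1}(\stackrel{\circ}W)=\stackrel{\circ}V=\widehat g^{-1}(\stackrel{\circ}W)$.

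Next I would pick any $p\in\stackrel{\circ}W$ and set $q:=(f|V)^{-1}(p)\in\stackrel{\circ}V$; by the preceding step, $\widehat f^{-1}(p)=\{q\}=\widehat g^{-1}(p)$. Applying the standard local-degree decomposition for self-maps of the $n$-sphere $\mathbb{R}^n_\infty$, I obtain
\[
\mathrm{deg}(\widehat f)=\sum_{q'\in\widehat f^{-1}(p)}\mathrm{deg}_{q'}(\widehat f)=\mathrm{deg}_q(\widehat f),
\]
and analogously $\mathrm{deg}(\widehat g)=\mathrm{deg}_q(\widehat g)$.

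Finally, by excision the local degree $\mathrm{deg}_q(\widehat f)$ equals the integer by which the induced map $(f|V)_*: H_n(V,V-\{q\})\to H_n(W,W-\{p\})$ acts on a generator. Since $f|V$ is a homeomorphism between $n$-balls, this induced map is an isomorphism $\mathbb{Z}\cong\mathbb{Z}$, so $\mathrm{deg}_q(\widehat f)\in\{1,-1\}$; the identical computation, with the same homeomorphism, gives $\mathrm{deg}_q(\widehat g)$ because $g|V=f|V$. Consequently $\mathrm{deg}(f)=\mathrm{deg}_q(\widehat f)=\mathrm{deg}_q(\widehat g)=\mathrm{deg}(g)\in\{1,-1\}$, proving both (1) and (2). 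The main technical point is setting up the excision cleanly so that $\widehat f_*$ on the local homology of $\mathbb{R}^n_\infty$ at $q$ is identified with $(f|V)_*$ on $H_n(V,V-\{q\})$; the identity $\widehat f^{-1}(\stackrel{\circ}W)=\stackrel{\circ}V$ established above is precisely what makes this identification routine, and after that the rest of the argument is essentially automatic.
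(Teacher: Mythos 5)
Your proof is correct, but it follows a genuinely different route from the paper's. You compute $\mathrm{deg}(\widehat f)$ as a local degree at a point $p\in\stackrel{\circ}{W}$ having the single preimage $q=(f|V)^{-1}(p)$, via the sum-over-preimages formula $\mathrm{deg}(\widehat f)=\sum_{q'\in\widehat f^{-1}(p)}\mathrm{deg}_{q'}(\widehat f)$ and an excision identification of $\mathrm{deg}_q(\widehat f)$ with the isomorphism $(f|V)_*:H_n(V,V-\{q\})\to H_n(W,W-\{p\})$; since $g|V=f|V$ and the bijectivity hypothesis gives $\widehat g^{-1}(p)=\{q\}$ as well, both conclusions $(1)$ and $(2)$ drop out of one computation. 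The paper instead proves $(1)$ by exhibiting an explicit homotopy from $\widehat f$ to $\widehat g$: the two maps are held fixed on $f^{-1}\big(B^n(w,r)\big)\cup\{\infty\}$ (where they already agree) and joined by a straight-line homotopy in a chart identifying $\mathbb R^n_\infty$ minus the open ball around $w$ with the convex set $B^n$ — legitimate precisely because, by bijectivity on $V$, neither map sends a point outside $f^{-1}\big(B^n(w,r)\big)$ into $B^n(w,r)$ — and then invokes homotopy invariance of degree; for $(2)$ it builds a radial pseudo-inverse $\varphi$ of $f$ near $w$ so that $\varphi\circ f$ restricts to the identity on a small ball and is bijective there, whence $\mathrm{deg}(\varphi)\cdot\mathrm{deg}(f)=\mathrm{deg}(id)=1$ by part $(1)$ and multiplicativity. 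Your argument is more economical, but it imports the local-degree machinery (the preimage decomposition and the generator bookkeeping under excision), which the paper deliberately avoids, restricting itself to the homotopy-invariance and multiplicativity properties it lists in Section 2. The points you single out as the technical crux — that $\widehat f^{-1}(\stackrel{\circ}{W})=\stackrel{\circ}{V}=\widehat g^{-1}(\stackrel{\circ}{W})$, that $\stackrel{\circ}{V}$ is open in $\mathbb R^n$ by invariance of domain, and that the generators of the local homology groups are chosen independently of $f$ and $g$ — are exactly the right ones, and all check out.
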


\begin{proof} (1) Let $\widehat{f}: \Bbb
R_{\;\infty}^{\;n} \rightarrow \Bbb R_{\;\infty}^{\;n}$ and
$\widehat{g}: \Bbb R_{\;\infty}^{\;n} \rightarrow \Bbb
R_{\;\infty}^{\;n}$ be the natural extensions of $f$ and $g$,
respectively. Write $W=f(V)$. Take $w \in \stackrel{\circ}W$ and
$r>0$ such that $B^n(w,r) \subset W$. Let $h: \Bbb
R_{\;\infty}^{\;n}-{\stackrel{\circ}{~B^{\;n}}}(w,r) \rightarrow
B^n$ be a homeomorphism. Write $X=f^{-1}\big(B^n(w,r)\big)$. Define
$F: \Bbb R_{\;\infty}^{\;n} \times [0,1] \rightarrow \Bbb
R_{\;\infty}^{\;n}$ by, for any $x \in \Bbb R_{\;\infty}^{\;n}$ and
any $t \in [0,1]$,
$$F(x,t)=\left\{\begin{array}{cl}\widehat{f}(x)=\widehat{g}(x), & \mbox{\ if\ } x \in X \cup \{\infty\};\\
h^{-1}\big((1-t) \cdot h \circ f(x)+t \cdot h \circ g(x)\big), &
\mbox{\ if\ } x \in \Bbb R^n-X.\end{array}\right.$$ Then $F$ is a
homotopy from $\widehat{f}$ to $\widehat{g}$. By Lemma \ref{2.2}, we have
$\mathrm{deg}(f)=\mathrm{deg}(\widehat{f})=\mathrm{deg}(\widehat{g})=\mathrm{deg}(g)$.

\vspace{3mm} (2) Write $v=f^{-1}(w)$. Take $s>0$ such that
$f\big(B^n(v,s)\big) \subset B^n(w,r)$. Define $\varphi: \Bbb R^n
\rightarrow \Bbb R^n$ by $\varphi(y)=f^{-1}(y)$ for any $y \in
B^n(w,r)$ and $\varphi(w+t(z-w))=v+ t(\varphi(z)-v)$ for any $z \in
\partial B^n(w,r)$ and any $t \geq 1$. Then $\varphi$ is an
$\infty$-extensible continuous map. Let $\psi=\varphi \circ f: \Bbb
R^n \rightarrow \Bbb R^n$. Then $\psi$ is also an
$\infty$-extensible continuous map, $\psi|B^n(v,s)=id$, and $\psi$
is bijective on $B^n(v,s)$. Noting that $id_{\Bbb R^n}$ is also
bijective on $B^n(v,s)$, from Lemma 2.2 and the proved conclusion (1) of this
lemma we get \vspace{-1mm}
$$\mathrm{deg}(\varphi) \cdot \mathrm{deg}(f)=\mathrm{deg}(\varphi \circ f)=\mathrm{deg}(\psi)
=\mathrm{deg}(id_{\Bbb R^n})=1.$$\vspace{-1mm} Hence,
$\mathrm{deg}(f)=\mathrm{deg}(\varphi)\in\{1, -1\}$. \end{proof}

\begin{defin} Let $V$ be an $n$-ball in
$\Bbb R^n$, $n \geq 1$, $W \subset \Bbb R^n$, $h: V \rightarrow W$
be a continuous injection, and $f: \Bbb R^n \rightarrow \Bbb R^n$ be
a continuous map. If there exists an $n$-ball $D \subset V$ such
that $f|D=h|D$, then $f$ is called a {\it \textbf{pseudo-extension}}
of $h$. Further, if $f$ is $\infty$-extensible, and  $f$
is bijective on $D$, then $f$ is called an {\it
\textbf{$\infty$-extensible locally bijective pseudo-extension}} of
$h$.
\end{defin}

\begin{lem} \label{existence} Let $V$ be an $n$-ball
in $\Bbb R^n$, $n \geq 1$, $W \subset \Bbb R^n$ and $h: V
\rightarrow W$ be a continuous injection. Then

\vspace{3mm} $(1)$\ $h$ has an $\infty$-extensible locally bijective
pseudo-extension.

\vspace{3mm} $(2)$\ For any given $\{v,u\} \subset
\stackrel{\circ}V$, there exist an $n$-ball $D \subset V$ and an
$\infty$-extensible continuous map $f: \Bbb R^n \rightarrow \Bbb
R^n$ such that $\{v,u\} \subset \stackrel{\circ}D$, $f|D=h|D$ and $f$ is
bijective on $D$.
\end{lem}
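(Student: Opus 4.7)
My plan is to prove (2) directly; part (1) then follows immediately by applying (2) with any pair of distinct points of $\stackrel{\circ}V$.

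\textbf{Choice of $D$.} The interior $\stackrel{\circ}V$ is a connected open subset of $\mathbb R^n$ (by invariance of domain) containing $\{v,u\}$. I would take $D = B^n(c,r_0)$ a standard round closed $n$-ball with $\{v,u\} \subset \stackrel{\circ}D$ and $D \subset \stackrel{\circ}V$ (for $n \ge 2$, obtained by thickening a path in $\stackrel{\circ}V$ joining $v$ to $u$; for $n = 1$, by taking a closed subinterval of $V$). I also fix a slightly larger concentric ball $D_1 = B^n(c,r_1)$, $r_0 < r_1$, with $D_1 \subset \stackrel{\circ}V$. Since $h$ is a continuous injection on the compact set $D_1$, the restriction $h|D_1$ is a homeomorphism onto $E_1 := h(D_1)$; by invariance of domain $\partial E_1 = h(\partial D_1)$ and $E := h(D) \subset \stackrel{\circ}E_1$, so $\delta := \operatorname{dist}(E,\partial E_1) > 0$. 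I set $e_0 := h(c) \in \stackrel{\circ}E$.

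\textbf{Radial extension.} Define $f|D_1 := h|D_1$. For $x \in \mathbb R^n \setminus \stackrel{\circ}D_1$, write $x = c + t\hat\theta$ with $t \ge r_1$ and $\hat\theta \in S^{n-1}$, set $y := c + r_1\hat\theta \in \partial D_1$, and put
\[
f(x) := h(y) + (t - r_1)\cdot\frac{h(y) - e_0}{\|h(y) - e_0\|}.
\]
The two definitions agree on $\partial D_1$, so $f$ is continuous, and $\|f(x) - h(y)\| = t - r_1 \to \infty$ as $\|x\| \to \infty$, giving $\infty$-extensibility. By construction $f|D = h|D$ is injective.

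\textbf{Bijectivity on $D$ and the main obstacle.} It remains to show $f(x) \notin E$ for every $x \notin D$. For $x \in D_1 \setminus D$, $f(x) = h(x) \in E_1 \setminus E$, hence $\notin E$. For $x \in \mathbb R^n \setminus D_1$, $f(x)$ lies on the ray from $e_0$ through $h(y) \in \partial E_1$, past the point $h(y)$, and the issue is to ensure this ray stays off $E$. This is the delicate point: if $E_1$ is wildly embedded, a straight ray from $e_0$ through $h(y)$ need not remain outside $E$, since $E_1$ need not be star-shaped with respect to $e_0$. My plan to resolve this is either (a) to scale the displacement by a sufficiently large constant $M$, so that by $\delta > 0$ and the compactness of $E$ the point $f(x)$ is pushed beyond the diameter of $E$ for every $t > r_1$; or (b) to replace the straight radial ray by a small continuous outward deflection into the open set $\mathbb R^n \setminus E_1 \subset \mathbb R^n \setminus E$, using the bicollar of $\partial D_1$ (available because $D_1$ is a tame round ball) to guarantee continuity in $\hat\theta$. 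Either adjustment yields $f^{-1}(E) = D$, proving (2); part (1) follows by choosing any two distinct points of $\stackrel{\circ}V$.
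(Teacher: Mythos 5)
There is a genuine gap, and it sits exactly at the point you yourself flag as ``the delicate point''; neither of your proposed repairs closes it. (A preliminary issue first: a \emph{round} ball $D=B^n(c,r_0)$ with $\{v,u\}\subset\stackrel{\circ}D$ and $D\subset\stackrel{\circ}V$ need not exist, since $\stackrel{\circ}V$ need not be convex --- take $V$ a thin bent tube with $v$ and $u$ near its two ends; thickening a path gives only a topological ball, whereas your radial coordinates $x=c+t\hat\theta$ require roundness. This is repairable, so it is not the main problem.) The main problem: in fix (a), replacing $t-r_1$ by $M(t-r_1)$ merely reparametrizes the ray $\{\,h(y)+s\,(h(y)-e_0)/\|h(y)-e_0\|:\ s>0\,\}$; its point set is independent of $M$, and the claim that $f(x)$ is ``pushed beyond the diameter of $E$ for every $t>r_1$'' cannot hold because continuity at $\partial D_1$ forces the displacement to tend to $0$ as $t\to r_1^{+}$. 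If $E_1=h(D_1)$ is, say, a thickened spiral in $\mathbb{R}^2$ with $e_0$ at its inner end, the outward ray from a suitable $h(y)\in\partial E_1$ re-enters $E$, so $f$ fails to be bijective on $D$ for every $M$. In fix (b), what you actually need is a continuous push-off of $\partial E_1$ into $\mathbb{R}^n\setminus E_1$ \emph{in the image}, i.e.\ an exterior collar of the $(n-1)$-sphere $\partial E_1$, which may be wildly embedded (an Alexander horned sphere for $n=3$) and hence not collared from outside; the bicollar of $\partial D_1$ lives in the domain, and $h$ carries only its inner half into $E_1$. (One can push off by applying $h$ itself on a slightly larger round ball $B^n(c,r_2)\subset\stackrel{\circ}V$, but then one faces the identical unsolved problem of continuing from the possibly wild sphere $h(\partial B^n(c,r_2))$ out to $\infty$ while avoiding $E$.)

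The paper's proof sidesteps the wildness of the image entirely by arranging the set on which bijectivity is required to have a \emph{round} image. It first extends $h$ radially from a small round ball $B^n(v,r)\subset V$ about the single point $v$, chooses a round ball $B^n(w,s)\subset h(B^n(v,r))$ about $w=h(v)$, and sets $E=h^{-1}(B^n(w,s))$: the radial extension centered at $w$ automatically misses $B^n(w,s)$, the image of $E$, because a round ball is star-shaped about its center. Only afterwards is the small ball $E$ enlarged to a topological (not round) ball $D$ with $\{v,u\}\subset\stackrel{\circ}D$, via an ambient homeomorphism $\eta$ of $\mathbb{R}^n$ supported in $\stackrel{\circ}V$ together with a matching homeomorphism $\xi$ on the image side, taking $f=\xi\circ g\circ\eta^{-1}$. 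Reorganizing your argument along these lines --- round image first, then ambient stretching --- is what is needed to complete the proof.
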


\begin{proof} Clearly, (2) implies (1). Thus it
suffices to prove (2). Take an $r>0$ such that $B^n(v, r) \subset
V$. Write $w=h(v)$. Define $g: \Bbb R^n \rightarrow \Bbb R^n$ by
$g|B^n(v,r)=h|B^n(v,r)$ and
$$g\big(v+t(z-v)\big)=w+ t\big(h(z)-w\big)$$
for any $z \in \partial B^n(v,r)$ and any $t \geq 1$. Then $g$ is an
$\infty$-extensible continuous maps. Take $s>0$ such that $B^n(w,s)
\subset h\big(B^n(v,r)\big)$. Write $E=h^{-1}\big(B^n(w,s)\big)$.
Then $E$ is an $n$-ball, $E \subset B^n(v,r)$, $g|E=h|E$, and $g$ is
bijective on $E$. Take a homeomorphism $\eta: \Bbb R^n \rightarrow
\Bbb R^n$ such that $\eta|(\Bbb R^n-\stackrel{\circ}V )=id$ and
$\eta(\stackrel{\circ}E) \supset \{v,u\}$. Write $D=\eta(E)$. Then
$D$ is an $n$-ball, and $\{v,u\} \subset \stackrel{\circ}D$. Take a
homeomorphism $\xi: \Bbb R^n \rightarrow \Bbb R^n$ such that
$\xi|W=h \circ \eta \circ h^{-1}$ and $\xi|(\Bbb
R^n-\stackrel{\circ}W)=id$. Let $f=\xi \circ g \circ \eta^{-1}: \Bbb
R^n \rightarrow \Bbb R^n$. Then $f$ is an $\infty$-extensible
continuous map. Consider any $y \in \Bbb R^n$. If $y \in D$, then
$\eta^{-1}(y) \in E$, which implies $g \circ \eta^{-1}(y)=h \circ
\eta^{-1}(y) \in h(E) \subset W$, and hence
$$f(y)=\xi \circ g \circ \eta^{-1}(y)=h \circ \eta \circ h^{-1} \circ h \circ \eta^{-1}(y)=h(y).$$
Thus we get $f|D=h|D$. If $y \notin D$, then $\eta^{-1}(y) \notin
E$, which implies $g \circ \eta^{-1}(y) \notin g(E)$, and hence
\begin{eqnarray*}
f(y) &=& \xi \circ g \circ \eta^{-1}(y) \in \xi\left(\Bbb
R^n-W\right) \cup \xi\big(W-g(E)\big)\\
&=& \left(\Bbb R^n-W\right) \cup \big(W-\xi \circ g(E)\big)=\Bbb
R^n-\xi \circ  h(E)\\
&=& \Bbb R^n-h \circ \eta \circ h^{-1} \circ h(E)=\Bbb R^n-h(D)=\Bbb
R^n-f(D).
\end{eqnarray*}
Thus $f$ is bijective on $D$.
\end{proof}

\begin{lem} \label{equality} Let $V$ be an $n$-ball in
$\Bbb R^n$, $n \geq 1$, $W \subset \Bbb R^n$, and $h: V \rightarrow
W$ be a continuous injection. Then any two $\infty$-extensible
locally bijective pseudo-extensions of $h$ have the same degree $1$
or $-1$.
\end{lem}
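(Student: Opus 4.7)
The plan is to reduce the comparison of two pseudo-extensions $f_1,f_2$ of $h$ (with respective ``bijection balls'' $D_1,D_2\subset V$) to two applications of Lemma \ref{equal}(1), by producing a third pseudo-extension $g$ whose bijection ball meets the interiors of both $D_1$ and $D_2$. The flexibility in Lemma \ref{existence}(2) is exactly what lets us build such a $g$.

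First I would pick points $v_i \in \stackrel{\circ}{D_i}$ for $i=1,2$. Since each $\stackrel{\circ}{D_i}$ is open in $\Bbb R^n$ and is contained in $V$, and since $\partial V$ has empty interior in $\Bbb R^n$, each $v_i$ lies in $\stackrel{\circ}V$. Applying Lemma \ref{existence}(2) to the pair $\{v_1,v_2\}\subset \stackrel{\circ}V$ produces an $n$-ball $D_0\subset V$ with $\{v_1,v_2\}\subset \stackrel{\circ}{D_0}$ and an $\infty$-extensible continuous map $g:\Bbb R^n\to\Bbb R^n$ with $g|D_0=h|D_0$ that is bijective on $D_0$.

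Next, for each $i\in\{1,2\}$ the intersection $\stackrel{\circ}{D_0}\cap \stackrel{\circ}{D_i}$ is an open neighborhood of $v_i$ in $\Bbb R^n$, so there exists $r_i>0$ with $B_i:=B^n(v_i,r_i)\subset D_0\cap D_i$. On $B_i$ we have
\[
 f_i|B_i \;=\; h|B_i \;=\; g|B_i,
\]
and a brief check of Definition \ref{bij} shows that bijectivity on a set is inherited by any subset, so both $f_i$ and $g$ are bijective on $B_i$. Since $B_i$ is an $n$-ball in $\Bbb R^n$, Lemma \ref{equal}(1) applied with $V:=B_i$ gives $\mathrm{deg}(f_i)=\mathrm{deg}(g)$ for $i=1,2$, hence $\mathrm{deg}(f_1)=\mathrm{deg}(f_2)$. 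That this common value lies in $\{1,-1\}$ is immediate from Lemma \ref{equal}(2).

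The only real subtlety is the step where $\stackrel{\circ}{D_1}$ and $\stackrel{\circ}{D_2}$ need not meet, so one cannot just compare $f_1$ and $f_2$ directly on a single small ball; the bridge ball $D_0$ from Lemma \ref{existence}(2), which simultaneously contains prescribed points of both interiors, is exactly what bypasses this obstacle. Everything else is a straightforward bookkeeping application of the already-established lemmas.
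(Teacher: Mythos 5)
Your proof is correct and follows essentially the same route as the paper's: pick interior points $v_1,v_2$ of the two bijection balls, use Lemma \ref{existence}(2) to build a bridging pseudo-extension whose bijection ball contains both points in its interior, and then apply Lemma \ref{equal} on small balls around each $v_i$ to chain the degrees together. Your extra remarks (that $v_i\in\stackrel{\circ}V$ and that bijectivity in the sense of Definition \ref{bij} passes to subsets) are correct and only make explicit what the paper leaves implicit.
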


\begin{proof} Let $f_1: \Bbb R^n \rightarrow
\Bbb R^n$ and $f_2: \Bbb R^n \rightarrow \Bbb R^n$ be two
$\infty$-extensible locally bijective pseudo-extensions of $h$. Then
for $j=1, 2$, there exists an $n$-ball $D_j \subset V$ such that
$f_j|D_j=h|D_j$, and $f_j$ is bijective on $D_j$. Take a point $v_j
\in \stackrel{\circ}{D_j}$. By Lemma \ref{existence}, there exist an $n$-ball
$D \subset V$ and an $\infty$-extensible continuous map $f: \Bbb R^n
\rightarrow \Bbb R^n$ such that $\{v_1, v_2\} \subset
\stackrel{\circ}D$, $f|D=h|D$, and $f$ is bijective on this $n$-ball
$D$. Take $\varepsilon>0$ such that $B^n(v_j, \varepsilon) \subset D
\cap D_j$ for $j=1,2$. From Lemma \ref{equal}, we get
$\mathrm{deg}(f_1)=\mathrm{deg}(f)=\mathrm{deg}(f_2) \in \{1, -1\}$.
\end{proof}

Lemma \ref{existence}-(1) and Lemma \ref{equality} make the following definition reasonable.

\begin{defin}\label{degree-ext}Let $V$ be an $n$-ball in
$\Bbb R^n$, $n \geq 1$, $W \subset \Bbb R^n$, and $h: V \rightarrow
W$ be a continuous injection. Take an $\infty$-extensible locally
bijective pseudo-extension $f$ of $h$. Define the {\it
\textbf{degree}} of $h$ to be $\mathrm{deg}(h)=\mathrm{deg}(f)$, and
if $\mathrm{deg}(h)=1$ (resp. $\mathrm{deg}(h)=-1$) then we say that
$h$ is {\it \textbf{orientation preserving}} (resp. {\it
\textbf{orientation reversing}}).
\end{defin}

\begin{rem}\label{rem1} $(1)$\ Let $W' \subset \Bbb R^n$ and let $h': V
\rightarrow W'$ be a continuous injection such that $h'(v)=h(v)$ for
any $v \in V$. Then by Definition \ref{degree-ext} we have
$\mathrm{deg}(h')=\mathrm{deg}(f)=\mathrm{deg}(h)$. Thus, when we
discuss degrees, the maps $h'$ and $h$ can be regard as the same.

\vspace{3mm} $(2)$ Let $X$ be an $n$-ball in $\Bbb R^n$, $n \geq 2$,
and $Y$ be an $(n-1)$-ball in $\partial X$. Then

\vspace{3mm} $(a)$ $X$ is said to be {\it\textbf{benign}} in
$\mathbb{R}^n$ if $\Bbb R_{\;\infty}^{\;n}-\stackrel{\circ}X$ is
also an $n$-ball.

\vspace{3mm} $(b)$ $Y$ is said to be {\it\textbf{benign}} in
$\partial X$ if $\partial X-\stackrel{\circ}Y$ is also an
$(n-1)$-ball.

\vspace{3mm} \noindent It is easy to see that the following three
conditions are equivalent:

\vspace{3mm} $(c)$ $X$ is benign $n$-ball in $\mathbb{R}^n$;

\vspace{3mm} $(d)$ There exists a homeomorphism $\eta_0: B^n
\rightarrow X$ which can be extended to a homeomorphism $\eta: \Bbb R^n
\rightarrow \Bbb R^n$;

\vspace{3mm} $(e)$ Every homeomorphism $\eta_0: B^n \rightarrow X$
can be extended to a homeomorphism $\eta: \Bbb R^n \rightarrow \Bbb R^n$.

\vspace{3mm}\noindent In Definition \ref{degree-ext}, if $h: V \rightarrow W$ is a
homeomorphism and $h$ can be extended to a homeomorphism $\eta: \Bbb R^n
\rightarrow \Bbb R^n$, then we can define the degree of $h$ to be
$\mathrm{deg}(h)=\mathrm{deg}(\eta)$. However, for $n \geq 3$, we
know there exist $n$-balls in $\Bbb R^n$ which are not benign. For
example, if $W$ is a $3$-ball in $\Bbb R^3$ and $\partial W$ is an
Alexander horned sphere, then $W$ is not benign \cite[p.
385]{Mun75}. Thus it is possible that the homeomorphism $h: V
\rightarrow W$ cannot be extended to a homeomorphism of $\Bbb R^n$.
Luckily, even if $h: V \rightarrow W$ cannot be extended to a
homeomorphism of $\Bbb R^n$, we can still choose an
$\infty$-extensible locally bijective pseudo-extension $f$ of $h$
and define the degree of $h$ as $\mathrm{deg}(h)=\mathrm{deg}(f)$.

\end{rem}

\begin{lem}\label{restrict} Let $V' \subset V$ be two
$n$-balls in $\Bbb R^n$, $n \geq 1$, $W \subset \Bbb R^n$, and let
$h: V \rightarrow W$ be a continuous injection. Let $h'=h|V': V'
\rightarrow W$. Then $\mathrm{deg}(h')=\mathrm{deg}(h)$.
\end{lem}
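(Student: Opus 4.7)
The plan is to exploit the fact that the degree of a continuous injection from an $n$-ball into $\mathbb R^n$ is defined via \emph{any} $\infty$-extensible locally bijective pseudo-extension (Definition \ref{degree-ext}), whose value is independent of the choice by Lemma \ref{equality}. So it suffices to exhibit a single map $f$ that simultaneously serves as such a pseudo-extension for both $h$ and $h'$.

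First I would pick any interior point $v\in\stackrel{\circ}{V'}$ (such a point exists since $V'$ is an $n$-ball) and apply Lemma \ref{existence}(2) to the continuous injection $h':V'\to W$. This produces an $n$-ball $D\subset V'$ with $v\in\stackrel{\circ}{D}$, together with an $\infty$-extensible continuous map $f:\mathbb R^n\to\mathbb R^n$ such that $f|D=h'|D$ and $f$ is bijective on $D$. By construction $f$ is an $\infty$-extensible locally bijective pseudo-extension of $h'$, so Definition \ref{degree-ext} gives $\mathrm{deg}(h')=\mathrm{deg}(f)$.

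Next I would observe that the very same map $f$ is also an $\infty$-extensible locally bijective pseudo-extension of $h$. Indeed, since $V'\subset V$, we have $D\subset V$, and for every $x\in D$ the equality $h'(x)=h(x)$ yields $f(x)=h'(x)=h(x)$, i.e.\ $f|D=h|D$. Together with the already established facts that $f$ is $\infty$-extensible and bijective on $D$, this shows $f$ qualifies as a pseudo-extension of $h$ in the sense required by Definition \ref{degree-ext}. By Lemma \ref{equality}, the degree of $h$ does not depend on which such pseudo-extension is chosen, so $\mathrm{deg}(h)=\mathrm{deg}(f)=\mathrm{deg}(h')$.

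There is essentially no obstacle here; the only thing to be careful about is that the $n$-ball $D$ witnessing the pseudo-extension property for $h'$ is chosen inside $V'$, hence automatically inside $V$, so that it simultaneously witnesses the pseudo-extension property for $h$. The substance of the lemma is really a routine consistency check on Definition \ref{degree-ext}, powered by the nontrivial existence result Lemma \ref{existence} and the well-definedness result Lemma \ref{equality}.
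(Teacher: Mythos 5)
Your proposal is correct and follows essentially the same route as the paper: both observe that an $\infty$-extensible locally bijective pseudo-extension of $h'$ (whose existence is guaranteed by Lemma \ref{existence}) is automatically one of $h$, since the witnessing ball $D\subset V'$ lies in $V$ and $h|D=h'|D$, and then both appeal to the well-definedness of the degree. Your write-up is merely more explicit about invoking Lemma \ref{existence}(2) and Lemma \ref{equality}, which the paper leaves implicit.
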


\begin{proof} Let $f: \Bbb R^n \rightarrow \Bbb
R^n$ be an $\infty$-extensible locally bijective pseudo-extensions
of $h'$. Then $f$ is also an $\infty$-extensible locally bijective
pseudo-extension of $h$. Thus we have
$\mathrm{deg}(h')=\mathrm{deg}(f)=\mathrm{deg}(h)$.
\end{proof}

\begin{lem} \label{composition} Let $V, W$ and $X$ be
$n$-balls in $\Bbb R^n$, and let $h: V \rightarrow W$ and $\eta: W
\rightarrow X$ be two continuous injections. Then $\mathrm{deg}(\eta
\circ h)=\mathrm{deg}(\eta) \cdot \mathrm{deg}(h)$.
\end{lem}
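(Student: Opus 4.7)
The plan is to reduce the multiplicativity of the extended degree to the known multiplicativity of degree on an $n$-sphere (Lemma \ref{2.2}). First I pick $\infty$-extensible locally bijective pseudo-extensions $\widetilde h,\widetilde\eta:\mathbb R^n\to\mathbb R^n$ of $h$ and $\eta$ respectively, so that by Definition \ref{degree-ext} and Lemma \ref{equality} one has $\mathrm{deg}(h)=\mathrm{deg}(\widetilde h)$ and $\mathrm{deg}(\eta)=\mathrm{deg}(\widetilde\eta)$. Since each of these maps sends points near $\infty$ to points near $\infty$, the composite $\widetilde\eta\circ\widetilde h$ is again $\infty$-extensible and its natural extension coincides with $\widehat{\widetilde\eta}\circ\widehat{\widetilde h}$. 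Therefore, once I have shown that $\widetilde\eta\circ\widetilde h$ is itself a locally bijective pseudo-extension of $\eta\circ h$, Definition \ref{inf-ext}, Definition \ref{degree-ext} and Lemma \ref{2.2} will deliver
\[
\mathrm{deg}(\eta\circ h)=\mathrm{deg}(\widetilde\eta\circ\widetilde h)=\mathrm{deg}(\widehat{\widetilde\eta}\circ\widehat{\widetilde h})=\mathrm{deg}(\widehat{\widetilde\eta})\cdot\mathrm{deg}(\widehat{\widetilde h})=\mathrm{deg}(\eta)\cdot\mathrm{deg}(h).
\]

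I expect the main obstacle to be producing a single $n$-ball $D\subset V$ on which $\widetilde\eta\circ\widetilde h$ both agrees with $\eta\circ h$ and is bijective in the sense of Definition \ref{bij}. Each pseudo-extension supplies such a ball only separately, namely $D_h\subset V$ for $\widetilde h$ and $D_\eta\subset W$ for $\widetilde\eta$, with no a priori reason for $\widetilde h(D_h)=h(D_h)$ to meet $D_\eta$. To synchronize them, I fix any $v\in\stackrel{\circ}{D_h}$ and observe by invariance of domain that $h(\stackrel{\circ}{D_h})$ is open in $\mathbb R^n$ and contained in $W$, so $w:=h(v)\in\stackrel{\circ}W$. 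Now Lemma \ref{existence}\,(2), applied to $\eta:W\to X$ with this specific interior point $w$, lets me \emph{choose} $\widetilde\eta$ so that $w\in\stackrel{\circ}{D_\eta}$.

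With $\widetilde\eta$ so chosen, continuity of $\widetilde h$ at $v$ furnishes an $\varepsilon>0$ with $D:=B^n(v,\varepsilon)\subset\stackrel{\circ}{D_h}$ and $\widetilde h(D)\subset\stackrel{\circ}{D_\eta}\subset D_\eta$. Then $D$ is an $n$-ball contained in $V$, and on $D$ one reads off $\widetilde\eta\circ\widetilde h=\widetilde\eta\circ h=\eta\circ h$. The bijectivity of $\widetilde\eta\circ\widetilde h$ on $D$ then reduces to a routine verification: injectivity on $D$ follows from that of $\widetilde h$ on $D_h\supset D$ composed with that of $\widetilde\eta$ on $D_\eta\supset\widetilde h(D)$, while the identity $(\widetilde\eta\circ\widetilde h)^{-1}((\widetilde\eta\circ\widetilde h)(D))=D$ follows from the chain
\[
\widetilde h^{-1}\bigl(\widetilde\eta^{-1}(\widetilde\eta(\widetilde h(D)))\bigr)=\widetilde h^{-1}(\widetilde h(D))=D,
\]
using bijectivity of $\widetilde\eta$ on $D_\eta$ for the first equality and of $\widetilde h$ on $D_h$ for the second. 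This confirms that $\widetilde\eta\circ\widetilde h$ is an $\infty$-extensible locally bijective pseudo-extension of $\eta\circ h$, and the displayed chain of equalities above then completes the proof.
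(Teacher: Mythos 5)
Your proof is correct and follows essentially the same route as the paper's: choose $\infty$-extensible locally bijective pseudo-extensions of $h$ and $\eta$, arrange (via Lemma \ref{existence}) for the second one's distinguished ball to sit over the image of the first one's, check that the composite is then an $\infty$-extensible locally bijective pseudo-extension of $\eta\circ h$, and conclude by Lemma \ref{2.2}. The only difference is cosmetic: the paper takes the ball $E$ directly inside $f(D)$ and labels the verification ``clearly,'' whereas you synchronize by shrinking $D_h$ around a point $v$ with $h(v)\in\stackrel{\circ}{D_\eta}$ and spell out the bijectivity check.
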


\begin{proof} Let $f: \Bbb R^n \rightarrow \Bbb
R^n$ be an $\infty$-extensible locally bijective pseudo-extensions
of $h$. Then there is an $n$-ball $D \subset V$ such that $f|D=h|D$,
and $f$ is bijective on $D$. By Lemma \ref{existence}, there exist an $n$-ball
$E \subset f(D)$ and an $\infty$-extensible continuous map $g: \Bbb
R^n \rightarrow \Bbb R^n$ such that $g|E=\eta|E$, and $g$ is
bijective on $E$. Clearly, $g \circ f$ is an $\infty$-extensible
locally bijective pseudo-extensions of $\eta \circ h$. Hence, by Lemma \ref{2.2},
$\mathrm{deg}(\eta \circ h)=\mathrm{deg}(g \circ f)=\mathrm{deg}(g)
\cdot \mathrm{deg}(f)=\mathrm{deg}(\eta) \cdot \mathrm{deg}(h)$.
\end{proof}

\begin{lem}\label{inverse} Let $V$ and $W$ be
$n$-balls in $\Bbb R^n$, and $h: V \rightarrow W$ be a
homeomorphism. Then $\mathrm{deg}(h^{-1})=\mathrm{deg}(h) \in \{1,
-1\}$.
\end{lem}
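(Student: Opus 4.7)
The plan is to deduce this directly from the composition law (Lemma \ref{composition}) by computing the degree of the identity map and then using the constraint that degrees lie in $\{1,-1\}$.

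First I would verify that $\mathrm{deg}(\mathrm{id}_V)=1$. The map $\mathrm{id}_{\mathbb R^n}\colon\mathbb R^n\to\mathbb R^n$ is $\infty$-extensible, agrees with $\mathrm{id}_V$ on the $n$-ball $V\subset V$ itself, and is obviously bijective on $V$. Hence $\mathrm{id}_{\mathbb R^n}$ is an $\infty$-extensible locally bijective pseudo-extension of $\mathrm{id}_V$, and so by Definition \ref{degree-ext} we have $\mathrm{deg}(\mathrm{id}_V)=\mathrm{deg}(\mathrm{id}_{\mathbb R^n})$. But the natural extension of $\mathrm{id}_{\mathbb R^n}$ is $\mathrm{id}_{\mathbb R^{\,n}_{\,\infty}}$, whose degree equals $1$ (e.g.\ by the homotopy-based Lemma \ref{2.2} or directly from the induced identity homomorphism on homology in Definition \ref{class-deg}).

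Next I would apply Lemma \ref{composition} to the pair $h\colon V\to W$ and $h^{-1}\colon W\to V$, both of which are continuous injections between $n$-balls in $\mathbb R^n$. This gives
\[
\mathrm{deg}(h^{-1})\cdot\mathrm{deg}(h)=\mathrm{deg}(h^{-1}\circ h)=\mathrm{deg}(\mathrm{id}_V)=1.
\]
Finally, Lemma \ref{equal}(2), combined with Definition \ref{degree-ext} (which defines $\mathrm{deg}(h)$ as the degree of an $\infty$-extensible locally bijective pseudo-extension, and similarly for $h^{-1}$), forces both $\mathrm{deg}(h)$ and $\mathrm{deg}(h^{-1})$ to lie in $\{1,-1\}$. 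Two elements of $\{1,-1\}$ whose product is $1$ must coincide, so $\mathrm{deg}(h^{-1})=\mathrm{deg}(h)\in\{1,-1\}$, as required.

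There is essentially no obstacle here: the argument is a formal consequence of the composition law plus the observation about $\mathrm{id}_V$. The only place to be slightly careful is confirming that Lemma \ref{composition} applies to $h^{-1}$, which it does because $W$ and $V$ are both $n$-balls in $\mathbb R^n$ and $h^{-1}$ is a continuous injection between them.
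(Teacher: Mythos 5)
Your proposal is correct and follows essentially the same route as the paper: apply Lemma \ref{composition} to get $\mathrm{deg}(h^{-1})\cdot\mathrm{deg}(h)=\mathrm{deg}(\mathrm{id}|V)=1$, then invoke Lemma \ref{equal}(2) to conclude both degrees lie in $\{1,-1\}$ and hence coincide. Your extra verification that $\mathrm{deg}(\mathrm{id}_V)=1$ via the pseudo-extension $\mathrm{id}_{\mathbb R^n}$ is a detail the paper leaves implicit, but the argument is the same.
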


\begin{proof} By Lemma \ref{composition}, we have
$\mathrm{deg}(h^{-1}) \cdot \mathrm{deg}(h)=\mathrm{deg}(h^{-1}\circ
h)=\mathrm{deg}(id|V)=1$. Then the conclusion follows from (2) of Lemma \ref{equal}.
\end{proof}

\begin{lem} \label{deg-conj} Let $V, W, X$ and $X'$
be $n$-balls in $\Bbb R^n$ with $V \cup W \subset X$, $n \geq 1$.
Let $h: V \rightarrow W$ be a continuous injection, and $\eta: X
\rightarrow X'$ be a homeomorphism. Write $V'=\eta(V)$ and
$W'=\eta(W)$. Put $h'=\eta \circ h \circ \eta^{-1}|V': V'
\rightarrow W'$. Then $\mathrm{deg}(h')=\mathrm{deg}(h)$.
\end{lem}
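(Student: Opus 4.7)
The plan is to reduce the statement to the multiplicativity of degree under composition (Lemma \ref{composition}) together with its invariance under restriction (Lemma \ref{restrict}) and inversion (Lemma \ref{inverse}). The unknown degree $\mathrm{deg}(h')$ will split off two factors coming from $\eta$, which I will then recognize as mutually inverse.

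First I will set $\eta_V=\eta|V\colon V\to V'$, $\eta_W=\eta|W\colon W\to W'$, and $\eta_X=\eta|X\colon X\to X'$; all three are homeomorphisms between $n$-balls in $\mathbb{R}^n$, since $V'=\eta(V)$ and $W'=\eta(W)$ lie in the $n$-ball $X'\subset\mathbb{R}^n$. The identity $h'=\eta_W\circ h\circ \eta_V^{-1}$ follows directly from the definition of $h'$ together with $h(V)\subset W$. Two applications of Lemma \ref{composition} (valid because every ball in sight sits inside $\mathbb{R}^n$) then give
\[
\mathrm{deg}(h')=\mathrm{deg}(\eta_W)\cdot\mathrm{deg}(h)\cdot\mathrm{deg}(\eta_V^{-1}),
\]
and Lemma \ref{inverse} lets me rewrite this as $\mathrm{deg}(h')=\mathrm{deg}(\eta_W)\cdot\mathrm{deg}(\eta_V)\cdot\mathrm{deg}(h)$.

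The remaining task is to show that the prefactor $\mathrm{deg}(\eta_W)\cdot\mathrm{deg}(\eta_V)$ equals $1$. Since Lemma \ref{inverse} places both factors in $\{1,-1\}$, it suffices to prove $\mathrm{deg}(\eta_V)=\mathrm{deg}(\eta_W)$. This is where the hypothesis $V\cup W\subset X$ does its work: applying Lemma \ref{restrict} to the inclusions $V\subset X$ and $W\subset X$, viewing $\eta_X$ as a continuous injection into $\mathbb{R}^n$ whose restrictions to $V$ and $W$ are $\eta_V$ and $\eta_W$ respectively, yields $\mathrm{deg}(\eta_V)=\mathrm{deg}(\eta_X)=\mathrm{deg}(\eta_W)$, and the conclusion follows.

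The only point that requires care — and what might at first look like an obstacle — is the worry that $\eta$ need not extend to a self-homeomorphism of $\mathbb{R}^n$ when $X$ is a wild $n$-ball (cf.\ Remark \ref{rem1}). The pseudo-extension framework of Section~2 circumvents this entirely: $\mathrm{deg}(\eta_X)$ is defined through any $\infty$-extensible locally bijective pseudo-extension of $\eta_X$, whose existence is guaranteed by Lemma \ref{existence} regardless of whether $\eta_X$ itself extends globally. Once this is noted, the argument is pure bookkeeping with the degree lemmas already established.
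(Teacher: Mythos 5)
Your proof is correct and follows essentially the same route as the paper's: both decompose $h'=(\eta|W)\circ h\circ(\eta^{-1}|V')$ and then combine Lemma \ref{composition}, Lemma \ref{restrict} (using $V\cup W\subset X$ to identify both restricted degrees with $\mathrm{deg}(\eta)$), and Lemma \ref{inverse} to cancel the two $\eta$-factors. The only difference is trivial bookkeeping in the order these lemmas are invoked.
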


\begin{proof} By Lemmas \ref{composition}, \ref{restrict} and \ref{inverse}, we
have $\mathrm{deg}(h')=\mathrm{deg}(\eta|W) \cdot \mathrm{deg}(h)
\cdot \mathrm{deg}(\eta^{-1}|V')=\mathrm{deg}(\eta) \cdot
\mathrm{deg}(h) \cdot \mathrm{deg}(\eta^{-1})=\mathrm{deg}(h)$. Thus
$\mathrm{deg}(h')=\mathrm{deg}(h)$.
\end{proof}

\begin{lem}\label{uniform} Let $f: \Bbb R^{n}
\rightarrow \Bbb R^n$ be an $\infty$-extensible continuous map, $n
\geq 1$, and $V$ be an $n$-ball in $\Bbb R^n$. Suppose that $f$ is
bijective on $V$. Write $h=f|V: V \rightarrow \Bbb R^n$. Let
$\mathrm{deg}(f)$ be defined as in Definition \ref{inf-ext}, and let
$\mathrm{deg}(h)$ be defined as in Definition \ref{degree-ext}. Then

\vspace{3mm} $(1)$\ $\mathrm{deg}(f)=\mathrm{deg}(h) \in \{1, -1\}$;

\vspace{3mm} $(2)$\ If $W$ is also an $n$-ball in $\mathbb{R}^n$,
$f$ is also bijective on $W$, and put $g=f|W: W \rightarrow
\mathbb{R}^n$, then $\mathrm{deg}(g)=\mathrm{deg}(h)$.
\end{lem}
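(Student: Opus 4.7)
The plan is to observe that $f$ itself is the natural ``self-pseudo-extension'' of its restrictions $h = f|V$ and $g = f|W$, after which both parts collapse to a direct appeal to Definition \ref{degree-ext} together with Lemma \ref{equal}(2) and Lemma \ref{equality}.

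For part (1), I would first check that $f$ qualifies as an $\infty$-extensible locally bijective pseudo-extension of $h = f|V$. This is immediate from the hypothesis: taking the $n$-ball $D$ in the definition of pseudo-extension to be $V$ itself, we have $f|D = h|D$ by construction, $f$ is $\infty$-extensible by assumption, and $f$ is bijective on $D = V$ by assumption. Hence, by Definition \ref{degree-ext} (whose well-definedness is guaranteed by Lemmas \ref{existence}(1) and \ref{equality}), one is entitled to compute $\mathrm{deg}(h)$ using this particular pseudo-extension, giving $\mathrm{deg}(h) = \mathrm{deg}(f)$. The fact that this common value lies in $\{1,-1\}$ then follows by applying Lemma \ref{equal}(2) with $g := f$, since $f|V = f|V$ and both (copies of) $f$ are bijective on $V$.

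For part (2), the argument is entirely parallel: since $f$ is bijective on the $n$-ball $W$ and $f|W = g|W$ trivially, $f$ is again an $\infty$-extensible locally bijective pseudo-extension, this time of $g = f|W$ (with $D = W$). Therefore $\mathrm{deg}(g) = \mathrm{deg}(f)$ by Definition \ref{degree-ext}. Combining with the equality $\mathrm{deg}(h) = \mathrm{deg}(f)$ established in (1) yields $\mathrm{deg}(g) = \mathrm{deg}(h)$.

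There is no real obstacle here; the only subtlety worth flagging in the write-up is that Definition \ref{degree-ext} produces a well-defined number \emph{independent of the chosen pseudo-extension} (this is exactly the content of Lemma \ref{equality}), so one is free to pick $f$ itself as the pseudo-extension of $h$ (respectively of $g$) rather than having to go through the explicit construction used in Lemma \ref{existence}. Once this is noted, parts (1) and (2) are one-line deductions from the relevant definitions and Lemma \ref{equal}(2).
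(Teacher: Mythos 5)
Your proposal is correct and follows essentially the same route as the paper: both observe that $f$ is itself an $\infty$-extensible locally bijective pseudo-extension of $h$ (and of $g$), so Definition \ref{degree-ext} gives $\mathrm{deg}(h)=\mathrm{deg}(f)=\mathrm{deg}(g)$ directly. The only cosmetic difference is that the paper cites Lemma \ref{inverse} for the membership in $\{1,-1\}$ while you cite Lemma \ref{equal}(2); both are valid.
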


\begin{proof} \ (1)\ Since $f$ itself is an
$\infty$-extensible locally bijective pseudo-extensions of $h$, by
Definition \ref{degree-ext}, we have $\mathrm{deg}(f)=\mathrm{deg}(h)$. By Lemma
\ref{inverse}, we have $\mathrm{deg}(h) \in \{1, -1\}$.

\vspace{3mm} (2)\ By the conclusion (1) of this lemma, we get
$\mathrm{deg}(g)=\mathrm{deg}(f)=\mathrm{deg}(h)$.
\end{proof}

 Clearly, there exist a continuous map $f: \mathbb{R}^n
\rightarrow \mathbb{R}^n$ and two $n$-balls $V$ and $W$ in
$\mathbb{R}^n$ such that $f$ is bijective on $V \cup W$, but
$\mathrm{deg}(f|W)=-\mathrm{deg}(f|V)$. Hence, in Lemma \ref{uniform}, the
condition that $f: \mathbb{R}^n \rightarrow \mathbb{R}^n$ is
$\infty$-extensible cannot be removed.

\begin{lem} \label{equivalence} Let $Y$ be an $n$-ball
in $\Bbb R^n$, $n \geq 1$, $V$ be an $n$-ball in
$\stackrel{\circ}Y$, and $f: (Y, \partial Y) \rightarrow (Y,
\partial Y)$ be a continuous map. Suppose that $f$ is bijective on
$V$. Put $h=f|V: V \rightarrow Y$. Let $\mathrm{deg}(h)$ be defined
as in Definition \ref{degree-ext}, and let $\mathrm{deg}(f)$ be defined as in
(2) of Definition \ref{class-deg}. Then $\mathrm{deg}(f)=\mathrm{deg}(h)$.
\end{lem}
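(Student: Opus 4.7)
The plan is to reduce to the case $Y = B^n$, construct the explicit radial extension $\widetilde f$ of $f$, establish $\mathrm{deg}(\widetilde f) = \mathrm{deg}(f)$ via a quotient-map intertwining, and then use that $\widetilde f$ is an $\infty$-extensible locally bijective pseudo-extension of $h$ restricted to a small sub-ball. For the reduction, fix a homeomorphism $\psi: Y \to B^n$ and set $\bar f = \psi \circ f \circ \psi^{-1}$, $\bar V = \psi(V)$; Lemma~\ref{conj-deg} gives $\mathrm{deg}(f) = \mathrm{deg}(\bar f)$, and Lemma~\ref{deg-conj} (with $X = Y$, $X' = B^n$, $\eta = \psi$) gives $\mathrm{deg}(h) = \mathrm{deg}(\bar f|\bar V)$, so I may henceforth assume $Y = B^n$.

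Next I define $\widetilde f: \mathbb R^n \to \mathbb R^n$ by $\widetilde f(x) = f(x)$ for $\|x\| \leq 1$ and $\widetilde f(x) = \|x\|\, f(x/\|x\|)$ for $\|x\| \geq 1$. Since $f(S^{n-1}) \subset S^{n-1}$, the two formulas agree on $S^{n-1}$ and $\|\widetilde f(x)\| = \|x\|$ for $\|x\| \geq 1$; hence $\widetilde f$ is continuous and $\infty$-extensible, with $\widehat{\widetilde f}(\infty) = \infty$. To show $\mathrm{deg}(\widetilde f) = \mathrm{deg}(f)$, introduce the quotient map $q: \mathbb R_\infty^n \to B^n/S^{n-1}$ which identifies every point of $\mathbb R_\infty^n - \stackrel{\circ}{B^n}$ with $[S^{n-1}]$. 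Since $B^n/S^{n-1}$ is homeomorphic to $S^n$ and $q$ collapses the contractible set $\mathbb R_\infty^n - \stackrel{\circ}{B^n}$ to a point, $\mathrm{deg}(q) = \pm 1$. The relative map $f$ descends to $f^\#: B^n/S^{n-1} \to B^n/S^{n-1}$ with $\mathrm{deg}(f^\#) = \mathrm{deg}(f)$ via the standard identification $H_n(B^n, S^{n-1}) \cong \widetilde H_n(B^n/S^{n-1})$; a direct region-by-region check (using $\|\widetilde f(x)\| \geq 1$ for $\|x\| \geq 1$) yields $q \circ \widehat{\widetilde f} = f^\# \circ q$, which by Lemma~\ref{2.2} forces $\mathrm{deg}(\widetilde f) = \mathrm{deg}(f^\#) = \mathrm{deg}(f)$.

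Next I pick a small bijectivity ball $D$. Invariance of domain makes $f(\stackrel{\circ}V)$ open in $\mathbb R^n$ and hence not contained in the $(n-1)$-dimensional $S^{n-1}$; pick $v \in \stackrel{\circ}V$ with $f(v) \in \stackrel{\circ}{B^n}$, and choose an $n$-ball $D \subset \stackrel{\circ}V$ about $v$ small enough that $f(D) \subset \stackrel{\circ}{B^n}$. Because $f$ is bijective on $V$ and $D \subset V$, $f^{-1}(f(D)) \subset f^{-1}(f(V)) = V$, and injectivity of $f|V$ forces this to equal $D$; so $f$ is bijective on $D$. Since $\|\widetilde f(x)\| \geq 1$ for $x \notin \stackrel{\circ}{B^n}$ while $f(D) \subset \stackrel{\circ}{B^n}$, no such $x$ is carried by $\widetilde f$ into $f(D)$; combined with $\widetilde f|B^n = f$ this gives $\widetilde f^{-1}(\widetilde f(D)) = D$, so $\widetilde f$ is bijective on $D$ with $\widetilde f|D = f|D$. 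Thus $\widetilde f$ is an $\infty$-extensible locally bijective pseudo-extension of $f|D$; by Definition~\ref{degree-ext}, $\mathrm{deg}(f|D) = \mathrm{deg}(\widetilde f)$, and by Lemma~\ref{restrict}, $\mathrm{deg}(h) = \mathrm{deg}(f|D)$. Combining with the preceding paragraph, $\mathrm{deg}(h) = \mathrm{deg}(\widetilde f) = \mathrm{deg}(f)$.

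The hard part will be the equality $\mathrm{deg}(\widetilde f) = \mathrm{deg}(f)$, which bridges the $\infty$-extensible degree (Definition~\ref{inf-ext}, via $\widehat{\widetilde f}: \mathbb R_\infty^n \to \mathbb R_\infty^n$) and the relative-homology degree (Definition~\ref{class-deg}(2) on $(B^n, S^{n-1})$). The quotient-map intertwining supplies the bridge, but verifying $q \circ \widehat{\widetilde f} = f^\# \circ q$ requires a careful case analysis on $\stackrel{\circ}{B^n}$, $S^{n-1}$, $\mathbb R^n - B^n$, and $\{\infty\}$, together with the standard identification $H_n(B^n, S^{n-1}) \cong \widetilde H_n(S^n)$.
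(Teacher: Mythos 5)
Your proof is correct, and its skeleton matches the paper's: reduce to $Y=B^n$ by conjugation, bridge the relative-homology degree of Definition \ref{class-deg}(2) to the $\infty$-extensible degree of Definition \ref{inf-ext} via a map that collapses a contractible set to a point and hence induces an isomorphism on $n$-th homology, and finally recognize the resulting map on $\Bbb R^n$ as an $\infty$-extensible locally bijective pseudo-extension of $h$. The difference is in how the bridge is built. The paper transports $f'$ onto $\Bbb R^{\,n}_{\,\infty}$ by conjugating with a map $g:(B^n,\partial B^n)\to(\Bbb R^{\,n}_{\,\infty},\{\infty\})$ that stretches the open ball radially onto $\Bbb R^n$ and sends $\partial B^n$ to $\infty$; choosing the stretching function to be the identity on a ball containing $V'\cup f'(V')$ makes the conjugate $\varphi_0$ agree with $h'$ on all of $V'$. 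You instead leave $f$ untouched on $B^n$, extend it radially outward to $\widetilde f$ on $\Bbb R^n$, and relate $\widetilde f$ to $f$ by a quotient semi-conjugacy onto $B^n/S^{n-1}$ rather than a conjugacy; the price is that you must locate a sub-ball $D$ with $f(D)\subset\stackrel{\circ}{B^n}$ (which you do correctly via invariance of domain), while the gain is that you never need $f'(V')$ to avoid $\partial B^n$ -- a hypothesis the paper's choice of $r$ quietly relies on. Two cosmetic points: Lemma \ref{2.2} as stated concerns self-maps of a single sphere, so to extract $\mathrm{deg}(\widetilde f)=\mathrm{deg}(f^\#)$ from $q\circ\widehat{\widetilde f}=f^\#\circ q$ you should argue directly from the fact that $q_*$ is an isomorphism on $\widetilde H_n$ (which you already observe), exactly as the paper argues from $g_*$; and the detour through Lemma \ref{restrict} is unnecessary, since $\widetilde f$ together with $D$ is already a pseudo-extension of $h=f|V$ itself.
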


\begin{proof}  Take a homeomorphism $\eta: Y
\rightarrow B^n$. Write $V'=\eta(V)$. Let $f'=\eta \circ f \circ
\eta^{-1}: B^n \rightarrow B^n$, and let $h'=\eta \circ h \circ
\eta^{-1}|V': V' \rightarrow B^n$. Then by Lemma \ref{conj-deg} and Lemma \ref{deg-conj}
we have $\mathrm{deg}(f')=\mathrm{deg}(f)$ and
$\mathrm{deg}(h')=\mathrm{deg}(h)$. Take $0<r<1$ such that $V' \cup
f'(V') \subset B^n(0,r)$. Let $\mu: [0,1) \rightarrow [0,\infty)$ be
a homeomorphism such that $\mu|[0, r]=id$. Define a continuous map
$g: (B^n, \partial B^n) \rightarrow (\Bbb R_{\;\infty}^{\;n},
\{\infty\})$ by $g(\partial B^n)=\{\infty\}$, and $g(t \cdot
x)=\mu(t) \cdot x$ for $x \in \partial B^n$ and $t \in
[0,1)$. Let $\varphi(x)=g \circ f' \circ g^{-1}(x)$ if $x\in\mathbb R^n$ and $\varphi(\infty)=\infty$.
Then $\varphi\circ g=g\circ f'$.  Noting that $g_*: H_n(B^n,
\partial B^n) \rightarrow H_n(\Bbb
R_{\;\infty}^{\;n}, \{\infty\})$ is an isomorphism, we have
$\mathrm{deg}(\varphi)=\mathrm{deg}(f')$. Let $\varphi_1=\varphi:
\Bbb R_{\;\infty}^{\;n} \rightarrow \Bbb R_{\;\infty}^{\;n}$, and
$\varphi_0=\varphi|\Bbb R^n: \Bbb R^n \rightarrow \Bbb R^n$. Then
$\mathrm{deg}(\varphi_0)=\mathrm{deg}(\varphi_1)=\mathrm{deg}(\varphi)$.
Since $\varphi_0|V'=h'$, $\varphi_0$ is an $\infty$-extensible
locally bijective pseudo-extensions of $h'$. Thus
$\mathrm{deg}(h')=\mathrm{deg}(\varphi_0)$, and hence
$\mathrm{deg}(f)=\mathrm{deg}(h)$.
\end{proof}

In Definition \ref{degree-ext}, $V$ is an $n$-ball in
$\mathbb{R}^n$ and $h$ is a continuous injection from $V$ to a
subspace $W$ of $\mathbb{R}^n$. In the following definition, we will
consider $n$-balls in general topological spaces.

\begin{defin}\label{degree-3} (1)\ Let $Y$ and $V$ be
two $n$-balls in some topological space $X$ with $V \subset Y$, $n
\geq 1$, and $h: V \rightarrow Y$ be a continuous injection. Take an
imbedding $\eta: Y \rightarrow \mathbb{R}^n$. Let $Y'=\eta(Y)$,
$V'=\eta(V)$, and let $h'=\eta \circ h \circ \eta^{-1}: V'
\rightarrow Y'$. Then we define the {\it \textbf{degree}} of $h$ to
be $\mathrm{deg}(h)=\mathrm{deg}(h')$. It follows from Lemma \ref{deg-conj}
that the definition of $\mathrm{deg}(h)$ is independent of the
choice of $\eta$.

\vspace{3mm} (2)\ Let $S$ be an $n$-sphere, $n \geq 1$, $V$ be an
$n$-ball in $S$, and $h:V \rightarrow S$ be a continuous injection.
Take an $n$-ball $D \subset V$ such that $S-D-h(D) \neq \emptyset$,
and take an $n$-ball $Y \subset S$ such that $D \cup h(D) \subset
Y$. Let $h_D=h|D: D \rightarrow Y$, and then we define the {\it
\textbf{degree}} of $h$ to be $\mathrm{deg}(h)=\mathrm{deg}(h_D)$.
It is easy to see from Lemma \ref{restrict} and (1) of Remark \ref{rem1} that the
definition of $\mathrm{deg}(h)$ is independent of the choice of $D$
and $Y$.
\end{defin}

\begin{rem} Let $Y$, $Z$, $W$ and $V$ be
$n$-balls in some topological space $X$ with $V \subset Y \cap Z$,
$n \geq 1$. Let $h: V \rightarrow Y$ and $h':V \rightarrow Z$ be
continuous injections. Suppose that $h(V)=h'(V)=W$ and $h(x)=h'(x) \in Y \cap Z$ for
any $x \in V$. Then

\vspace{3mm} $(1)$\ It is possible that
$\mathrm{deg}(h')=-\mathrm{deg}(h)$. For example, if $X$ is a Mobius
strip, then there exist $2$-balls $Y$, $Z$, $V$, $W$ and continuous
injections $h: V \rightarrow Y$ and $h': V \rightarrow Z$ such that
$Y \cup Z=X$, $Y \cap Z \supset V \cup W$, $h(V)=h'(V)=W$,
$h(x)=h'(x)$ for any $x \in V$, and
$\mathrm{deg}(h')=-\mathrm{deg}(h)$. Hence, in Definition \ref{degree-3}, in
general, the definition of the degree of the continuous injection
$h: V \rightarrow Y$ is dependent on the choice of $Y$.

\vspace{3mm} $(2)$\ On the other hand, if there exists an $n$-ball
$Q \subset X$ such that $V \cup W \subset Q \subset Y \cap Z$ or $Q
\supset Y \cup Z$, then by Lemma \ref{restrict} it is easy to show
$\mathrm{deg}(h')=\mathrm{deg}(h)$. Hence, if $V \cup W \subset
\mathbb{R}^n$ and $Y$ is restricted to be in $\mathbb{R}^n$, then
the definition of the degree of the continuous injection $h:V
\rightarrow Y$ is independent of the choice of $Y$.

\vspace{3mm} $(3)$\ Let $M$ be a connected orientable $n$-manifold,
$n \geq 1$, $V$ be a connected $n$-submanifold of $M$, and $h: V
\rightarrow M$ be a continuous injection. Similar to $(2)$ of
Definition \ref{degree-3}, we can also take an $n$-ball $D \subset V$ and an
$n$-ball $Y \subset M$ such that $D \cup h(D) \subset Y$, and then
we define the {\it \textbf{degree}} of $h$ to be
$\mathrm{deg}(h)=\mathrm{deg}(h_D)$. The detail is omitted here.
\end{rem}

\section{Blockading sets and fixed points}

A subset $A$ of a space $X$ is called a {\it
\textbf{retract}} of $X$ if there exists a continuous map $r: X
\rightarrow A$ such that $r(x)=x$ for each $x \in A$, and such a map
$r: X \rightarrow A$ is called a {\it \textbf{retraction}} of $X$ to
$A$. It is easy to show that if $A$ is a retract of a normal space
$X$ then $A$ must be a closed subset of $X$. A normal space $Y$ is
called an {\it \textbf{absolute retract}} if for any normal space
$Z$ and any closed subset $W$ of $Z$, whenever $W$ is homeomorphic
to $Y$ then $W$ is a retract of $Z$ \cite[p. 221]{Mun75}. It is
well known that every ball is an absolute retract. Note that not
all absolute retracts are compact. For example, every open ball is
also an absolute retract. In addition, by the definition, the empty
set $\emptyset$ cannot be an absolute retract.
\medskip

It is well known that each $(n-1)$-sphere $S$ in $\mathbb R^n$ separates
$\mathbb R^n$ into exactly two connected components: one is bounded and denoted by
${\rm Int(S)}$, the other is unbounded and denoted by ${\rm Ext(S)}$ (see e.g. \cite{HY61}).
\medskip

 The following retraction theorem is due to Bing \cite[Theorem 2]{Bi64}.

\begin{thm}\label{bing}
For each $(n-1)$-sphere $S$ in $\mathbb R^n$ and each point $p\in {\rm Int}(S)$,
there is a retraction of $\mathbb R^n-\{p\}$ to $S$.
\end{thm}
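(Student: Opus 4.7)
The goal is to construct a retraction $r\colon \mathbb{R}^n-\{p\}\to S$, and the essential difficulty is that $S$ may be a wild $(n-1)$-sphere (for instance an Alexander horned sphere), so naive radial projection from $p$ need not give a well-defined map, as a ray from $p$ may meet $S$ in more than one point. My plan is to argue homotopically: I would show that the inclusion $\iota\colon S\hookrightarrow \mathbb{R}^n-\{p\}$ is both a closed cofibration and a homotopy equivalence, so that by the classical fact that a cofibration which is a homotopy equivalence is a (strong) deformation retract, the desired retraction exists.

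For the cofibration property, I would invoke that every topological $(n-1)$-manifold is a compact ANR while $\mathbb{R}^n-\{p\}$, being open in $\mathbb{R}^n$, is also an ANR; since $S$ is closed in $\mathbb{R}^n-\{p\}$, the pair $(\mathbb{R}^n-\{p\},S)$ is an ANR-pair and $\iota$ is therefore a closed cofibration. To show that $\iota$ is a homotopy equivalence, I would first observe that $\mathbb{R}^n-\{p\}$ deformation retracts by a straight-line homotopy onto any small round sphere $S_\varepsilon$ centered at $p$ with $S_\varepsilon\subset\mathrm{Int}(S)$; in particular $H_{n-1}(\mathbb{R}^n-\{p\})\cong \mathbb{Z}$ with generator $[S_\varepsilon]$, and analogously for the higher homotopy.

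The main obstacle is then to justify that the class $[S]$ also generates $H_{n-1}(\mathbb{R}^n-\{p\})$, which for a wild $S$ is not at all visible geometrically; here I would invoke Alexander duality in $S^n=\mathbb{R}^n\cup\{\infty\}$ (or equivalently the Jordan--Brouwer separation theorem), reducing the question to counting the complementary domains of $S$, which is exactly two, with $p$ on the bounded side, so that $[S]=\pm[S_\varepsilon]$ in $H_{n-1}(\mathbb{R}^n-\{p\})$. Consequently $\iota_\ast$ is an isomorphism on $H_{n-1}$; for $n\geq 3$ both spaces are $(n-2)$-connected, so Hurewicz together with Whitehead's theorem (applicable since ANRs have the homotopy type of CW complexes) upgrades this to a full homotopy equivalence, while $n=2$ is handled by the same argument on $\pi_1$ and $n=1$ is immediate because $S$ consists of two points lying in the two components of $\mathbb{R}-\{p\}$.
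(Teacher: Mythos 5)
The paper does not prove this statement at all: it is quoted as Theorem~2 of Bing's note \cite{Bi64}, where the retraction is produced by an elementary, explicitly geometric construction. Your route is therefore genuinely different, and if completed it would prove strictly more, namely that $S$ is a \emph{strong deformation retract} of $\mathbb{R}^n-\{p\}$. The structural parts of your argument are sound: a closed ANR subspace of an ANR is a cofibration, a cofibration which is a homotopy equivalence is the inclusion of a strong deformation retract, and the homology Whitehead theorem (both spaces being simply connected of CW homotopy type for $n\ge 3$, with the $\pi_1$ and $n=1$ cases handled separately) correctly reduces everything to showing that $\iota_*\colon H_{n-1}(S)\to H_{n-1}(\mathbb{R}^n-\{p\})$ is an isomorphism.

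The gap is in how you discharge that last point. You say Alexander duality ``reduces the question to counting the complementary domains of $S$,'' and that since there are two, with $p$ on the bounded side, $[S]=\pm[S_\varepsilon]$. That inference is not valid: the Jordan--Brouwer separation statement (two complementary domains, $S$ the boundary of each) says nothing by itself about which multiple of the generator $[S]$ represents in $H_{n-1}(\mathbb{R}^n-\{p\})\cong\mathbb{Z}$; a priori it could be $0$ or $d\cdot[S_\varepsilon]$ with $|d|\ge 2$. What you need is the quantitative refinement that the inclusion of $S$ into $S^n$ minus one point from each complementary domain is a homology isomorphism, and for a wild sphere this is precisely where the entire difficulty of the theorem sits: there is no collar, so the excision and Mayer--Vietoris arguments that settle the tame case do not apply, and one must use the duality isomorphism $\tilde H_{n-1}(S^n-K)\cong\check H{}^{0}(K)$ in its natural (linking-number) form, comparing $K=\{p,\infty\}$ with $K=S\cup\{p,\infty\}$, or quote an equivalent refined Jordan theorem. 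The fact is true and standard, so your proof can be completed, but as written the decisive step is asserted rather than proved; note also that it cannot be recovered from the conclusion without circularity, since the existence of a retraction $r$ with $r\circ\iota=\mathrm{id}_S$ already forces $\iota_*$ to be an isomorphism.
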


\begin{defin}\label{block}Let $X$ be an $n$-ball in
$\Bbb R^n$, $n \geq 1$, and $f: X \rightarrow \mathbb{R}^n$ be a
continuous map. A subset $V$ of $X$ is called an {\it
\textbf{$(f,X)$-blockading set}} if there exists an open
neighborhood $U$ of $f^{-1}(V)$ in $X$ such that $f(U) \subset X$.
\end{defin}

 Clearly, in Definition \ref{block}, if $V=\emptyset$ or $f(X)
\cap V=\emptyset$, then $V$ is an $(f, X)$-blockading set.

\begin{thm} \label {main1'} Let $X$ be an $n$-ball
in $\Bbb R^n$, $n \geq 1$, $Y\subset \partial X$ be an absolute
retract, and $f: X \rightarrow \mathbb{R}^n$ be a continuous map. If
$f(Y) \subset X$, and $\partial X-Y$ is an $(f, X)$-blockading set,
then $f$ has a fixed point in $X$.
\end{thm}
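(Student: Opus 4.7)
The plan is to reduce to the classical Brouwer fixed point theorem (Theorem \ref{Brouwer}) by constructing a continuous self-map $g: X \to X$ every fixed point of which must be a fixed point of $f$.

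First I would produce two auxiliary retractions. Since $Y$ is a (closed) absolute retract in the normal space $\partial X$, the defining property of AR furnishes a retraction $\sigma: \partial X \to Y$. Choosing any point $p \in \stackrel{\circ}X$, Bing's retraction theorem (Theorem \ref{bing}) supplies a retraction $\rho: \mathbb{R}^n - \{p\} \to \partial X$. Composing, $\sigma \circ \rho: \mathbb{R}^n - \{p\} \to Y$ is continuous and restricts to the identity on $Y$.

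Next I would define $g: X \to X$ by
\[
g(x) = \begin{cases} f(x), & \text{if } f(x) \in X, \\ \sigma(\rho(f(x))), & \text{if } f(x) \notin X. \end{cases}
\]
Since $p \in \stackrel{\circ}X$, the case $f(x) \notin X$ ensures $f(x) \neq p$, so $\rho(f(x))$ is well defined, and in that case $g(x) \in Y \subset X$. The main obstacle is verifying the continuity of $g$: the two branches can only conflict at points $x_0$ with $f(x_0) \in \partial X$. When $f(x_0) \in Y$ the two formulas give the common value $f(x_0)$ (since $\sigma(\rho(f(x_0))) = \sigma(f(x_0)) = f(x_0)$), and continuity at $x_0$ then follows from the continuity of $f$, $\rho$, and $\sigma$. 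When $f(x_0) \in \partial X - Y$, the blockading hypothesis provides an open set $U \supset f^{-1}(\partial X - Y)$ with $f(U) \subset X$; then $x_0 \in U$ and $g$ coincides with $f$ on $U$, so $g$ is again continuous at $x_0$. This is exactly the role of the blockading assumption, and I expect this continuity verification to be the one genuinely delicate step.

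With $g: X \to X$ known to be continuous, Theorem \ref{Brouwer} produces a fixed point $x^* \in X$ of $g$. If $f(x^*) \in X$, then $x^* = g(x^*) = f(x^*)$ and we are done. If instead $f(x^*) \notin X$, then $x^* = g(x^*) = \sigma(\rho(f(x^*))) \in Y$, whence $f(x^*) \in f(Y) \subset X$, contradicting $f(x^*) \notin X$. In either case $f$ has a fixed point in $X$.
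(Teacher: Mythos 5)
Your strategy is in essence the paper's own: build a continuous self-map of $X$ that agrees with $f$ wherever $f$ lands in $X$ and otherwise pushes $f(x)$ back into $Y$ by a retraction, use the blockading hypothesis exactly where you use it to get continuity, and use $f(Y)\subset X$ to rule out spurious fixed points of the auxiliary map. Your continuity case analysis and your final dichotomy are correct. There is, however, one genuine gap, at the very first step: the retraction $\sigma:\partial X\to Y$ need not exist. The theorem assumes only that $Y\subset\partial X$ is an absolute retract, and the paper's definition deliberately admits non-closed examples --- it notes explicitly that every \emph{open} ball is an absolute retract. Since a retract of a Hausdorff (indeed normal) space is necessarily closed in it, if $Y$ is, say, an open $(n-1)$-ball in $\partial X$, then there is no retraction of $\partial X$ onto $Y$ at all, and your map $\sigma\circ\rho$ cannot be constructed. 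The parenthetical ``(closed)'' in your first sentence is an unstated extra hypothesis, not a harmless observation; as written, your argument proves the theorem only for $Y$ closed in $\partial X$.

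The repair is the observation with which the paper opens its proof: although $Y$ may fail to be closed in $\partial X$, it is always closed in $(\mathbb{R}^n-X)\cup Y$, because $\overline{Y}\subset\partial X\subset X$ is disjoint from $\mathbb{R}^n-X$, so that $\overline{Y}\cap\big((\mathbb{R}^n-X)\cup Y\big)=Y$. The absolute-retract property therefore applies directly to $Y$ as a closed subset of the normal space $(\mathbb{R}^n-X)\cup Y$ and yields a retraction $\alpha:(\mathbb{R}^n-X)\cup Y\to Y$ in one step, with no detour through $\partial X$ and no appeal to Bing's theorem. Replacing $\sigma\circ\rho\circ f$ by $\alpha\circ f$ in your second branch, the remainder of your argument --- the three continuity cases (with $f(x_0)\in\overline{Y}-Y$ falling under the blockading case, since $\overline{Y}-Y\subset\partial X-Y$) and the concluding fixed-point dichotomy --- goes through verbatim.
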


\begin{proof}
Noting that $Y \subset \partial X$ and $\partial X$ is closed in $\Bbb R^n$,  $Y$ is a closed subset of $(\Bbb R^n-X)\cup Y$, no
matter whether $Y$ is a closed subset of $\partial X$ or not. Since $Y$ is an absolute retract, there is a
retraction $\alpha: (\Bbb R^n-X)\cup Y \rightarrow Y$. Define a map $\beta: \Bbb R^n
\rightarrow X$ by $\beta|(\Bbb R^n-X)\cup Y=\alpha$ and
$\beta|X=id$. Then $\beta$ is not continuous at any point $v \in
\partial X-\overline{Y}$, and $\beta$ may not be continuous at any point $v \in \overline{Y}-Y$,
but $\beta$ is continuous at any point $v \in \Bbb R^n-(\partial
X-Y)$. Let $h=\beta \circ f: X \rightarrow X$. We have
\medskip

\textbf{Claim 3.3.1.}\ $h:X \rightarrow X$ is a continuous map.
\medskip

{\textbf{Proof of Claim 3.3.1.}} \ Consider any point $x \in X$. If
$f(x) \in \Bbb R^n-(\partial X-Y)$, then, since $f$ is continuous at
$x$ and $\beta$ is continuous at $f(x)$, it follows that $h=\beta
\circ f$ is continuous at $x$. If $f(x) \in
\partial X-Y$, then, since $\partial X-Y$ is $(f, X)$-blockading,
there is an open neighborhood $U_x$ of $x$ in $X$ such that $f(U_x)
\subset X$, which with $\beta|X=id$ implies that $h|U_x=\beta \circ
f|U_x=f|U_x$ is continuous. Thus $h$ is continuous at any point $x
\in X$.

\medskip \textbf{Claim 3.3.2.}\
$\mathrm{Fix}(f)=\mathrm{Fix}(h)$.
\medskip

{\textbf{Proof of Claim 3.3.2.}} \ If $x \in \mathrm{Fix}(f)$, then
$f(x)=x \in X$, which with $\beta|X=id$ implies $h(x)=\beta \circ f
(x)=f(x)=x$. Thus $\mathrm{Fix}(f) \subset \mathrm{Fix}(h)$.

\vspace{3mm} Conversely, consider any $y \in \mathrm{Fix}(h)$. If
$f(y) \in \Bbb R^n-X$, then $y=h(y)=\beta \circ f(y) \in \beta(\Bbb
R^n-X)=\alpha(\Bbb R^n-X) \subset Y$. However, from the condition of the
theorem, we get $f(y) \in f(Y)\subset X$, which is a contradiction.
 Thus we must have $f(y) \in X$, which with $\beta|X=id$ implies that $f(y)=\beta
\circ f(y)=h(y)=y$. Hence $\mathrm{Fix}(h) \subset \mathrm{Fix}(f)$
also holds. Claim 3.3.2 is proved.

\vspace{3mm} From Claim 3.3.2, Claim 3.3.1, and Theorem
\ref{Brouwer}, we obtain $\mathrm{Fix}(f)=\mathrm{Fix}(h) \neq
\emptyset$.
\end{proof}

The following corollary is immediate.

\begin{cor} Let $X$ be an
$n$-ball in $\Bbb R^n$, $n \geq 1$, $Y\subset \partial X$ be an
absolute retract, and $f: X \rightarrow \mathbb{R}^n$ be a
continuous map. If $f(Y) \subset X$, and $f(X) \cap (\partial
X-Y)=\emptyset$, then $f$ has a fixed point.
\end{cor}

\vspace{3mm} Considering the case that $Y$ is an $(n-1)$-ball in
$\partial X$, we have the following theorem which will be used in the next section.

\begin{thm}\label{ball} Let $X$ be an $n$-ball
in $\Bbb R^n$, $n \geq 1$, $Y$ be an $(n-1)$-ball in $\partial X$,
and $f: X \rightarrow \mathbb{R}^n$ be a continuous map. If
$f^{-1}(\partial X-Y)=\stackrel{\circ}Y$, and there exist a point $v
\in \stackrel{\circ}Y$ and an open neighborhood $W_v$ of $v$ in $X$
such that $f(W_v) \subset X$, then $f$ has a fixed point.
\end{thm}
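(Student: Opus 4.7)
The plan is to apply Theorem~\ref{main1'} with $Y$ itself playing the role of the absolute retract. Two conditions must be checked: $f(Y)\subset X$, and $\partial X - Y$ is an $(f,X)$-blockading set.

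For the first, the hypothesis $f^{-1}(\partial X - Y)=\stackrel{\circ}Y$ gives $f(\stackrel{\circ}Y)\subset \partial X - Y\subset X$. For $p\in\partial Y$, taking a sequence in $\stackrel{\circ}Y$ converging to $p$ and using continuity of $f$ together with the closedness of $X$ in $\mathbb R^n$ yields $f(p)\in X$; hence $f(Y)\subset X$.

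The substantial step is the blockading condition, namely producing an open neighborhood $U$ of $f^{-1}(\partial X - Y)=\stackrel{\circ}Y$ in $X$ with $f(U)\subset X$. I plan to build $U$ pointwise: for each $p\in\stackrel{\circ}Y$ I seek a small open Euclidean ball $N_p$ about $p$ with $N_p\cap \partial X\subset \stackrel{\circ}Y$, $f(N_p)\cap Y=\emptyset$, and $f(N_p\cap X)\subset X$, and then set $U=\bigcup_p(N_p\cap X)$. The first two constraints are arranged for small enough $N_p$ because $Y$ is closed in $\mathbb R^n$ and $f(p)\in\partial X - Y$ lies in an open subset of $\partial X$. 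Under these constraints, the hypothesis $f^{-1}(\partial X - Y)=\stackrel{\circ}Y$ forces $f(\stackrel{\circ}X\cap N_p)\cap \partial X=\emptyset$; by the Jordan--Brouwer separation theorem $\mathbb R^n-\partial X$ has exactly two connected components, $\stackrel{\circ}X$ and $\mathbb R^n - X$, so each connected component of $\stackrel{\circ}X\cap N_p$ is mapped entirely into one of these two components.

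The main obstacle is ensuring, for every $p\in\stackrel{\circ}Y$, that $N_p$ can be chosen with each component of $\stackrel{\circ}X\cap N_p$ landing in $\stackrel{\circ}X$ rather than in $\mathbb R^n - X$. At $v$ this is immediate: shrink $N_v$ inside $W_v$ and invoke the dichotomy, since $f(W_v)\subset X$ rules out the $\mathbb R^n - X$ alternative. To propagate this to the rest of $\stackrel{\circ}Y$, I will use that $\stackrel{\circ}Y$ is connected---an open $(n-1)$-ball when $n\geq 2$ and the singleton $\{v\}$ when $n=1$, in which case no propagation is needed. Letting $T$ denote the set of such good points, $T$ is open in $\stackrel{\circ}Y$ by construction and contains $v$; I plan to establish closedness of $T$ by the following limit argument: if $p_k\in T$ with $p_k\to p\in \stackrel{\circ}Y$ and some component $C$ of $\stackrel{\circ}X\cap N_p$ were mapped into $\mathbb R^n - X$, then for suitable $k$ the ball $N_{p_k}$ overlaps $C$, producing an $x\in C\cap N_{p_k}$ with $f(x)\in\stackrel{\circ}X$ (from $p_k\in T$) and $f(x)\in\mathbb R^n - X$ (from $x\in C$), a contradiction. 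Connectedness of $\stackrel{\circ}Y$ then yields $T=\stackrel{\circ}Y$, so $U=\bigcup_{p\in\stackrel{\circ}Y}(N_p\cap X)$ satisfies $f(U)\subset \stackrel{\circ}X\cup(\partial X - Y)\subset X$, and Theorem~\ref{main1'} produces the fixed point.
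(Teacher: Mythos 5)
Your overall strategy is the same as the paper's: reduce to Theorem~\ref{main1'} with $Y$ as the absolute retract, the only real work being the construction of an open neighborhood $U$ of $\stackrel{\circ}Y$ in $X$ with $f(U)\subset X$, obtained from the dichotomy $f(x)\in\stackrel{\circ}X$ or $f(x)\in\Bbb R^n-X$ plus a connectedness argument seeded at $v$. Your verification of $f(Y)\subset X$ (including the limit argument at $\partial Y$) and the reduction of the blockading condition to the statement ``every component of $\stackrel{\circ}X\cap N_p$ lands in $\stackrel{\circ}X$'' are fine.

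The gap is in the closedness of $T$. You use round Euclidean balls $N_p$, and for a wildly embedded $X$ (which the paper explicitly allows, cf.\ the Alexander horned sphere) the open set $\stackrel{\circ}X\cap N_p$ can be disconnected, with components $C$ whose closures do not contain $p$ and which stay well away from $p$; this can happen at every scale, so shrinking $N_p$ does not help. For such a $C$ your contradiction fails: there is no reason why $C$ should meet $N_{p_k}$ for any $k$ (the $N_{p_k}$ may be tiny balls clustering at $p$, while the frontier of $C$ lies in $\stackrel{\circ}Y\cap N_p$ only at points far from every $p_k$, about which you have no information yet). The missing ingredient is to replace round balls by neighborhoods adapted to the manifold-with-boundary structure of $X$: every $y\in\partial X$ has arbitrarily small connected open neighborhoods $N$ in $X$ (images of half-balls under a chart identifying a neighborhood of $y$ in $X$ with $\Bbb R^{n-1}\times[0,1)$) for which $N-\partial X$ is \emph{connected}. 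With such neighborhoods the dichotomy applies to the single component $N-\partial X$ and your open--closed argument goes through; this is exactly what the paper does, except that it takes the union $U=\bigcup U_y$ at once, observes that $U-\stackrel{\circ}Y$ is connected, misses $f^{-1}(\partial X)$, and meets $W_v$, and concludes $f(U-\stackrel{\circ}Y)\subset\stackrel{\circ}X$ in a single stroke rather than by propagation along $\stackrel{\circ}Y$.
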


\begin{proof} Note that the condition
$f^{-1}(\partial X-Y)=\stackrel{\circ}Y$ in the theorem is
equivalent to $f(\stackrel{\circ}Y) \subset \partial X-Y$ and
$f(X-\stackrel{\circ}Y) \cap (\partial X-Y)=\emptyset$. Thus, for
each $y \in \stackrel{\circ}Y$, there exists a connected open
neighborhood $U_y$ of $y$ in $X$ such that $U_y \cap
\partial X \subset \stackrel{\circ}Y$ and
$f(U_y-\stackrel{\circ}Y)\cap \partial X=\emptyset$. Write
$U=\bigcup \{U_y: y \in \stackrel{\circ}Y\}$. Then $U$ is a
connected open neighborhood of $\stackrel{\circ}Y$ in $X$, $U \cap
\partial X=\stackrel{\circ}Y$, $f(U-\stackrel{\circ}Y)\cap \partial X
=\emptyset$, and $U-\stackrel{\circ}Y$ is also connected. Since the
connected set $U-\stackrel{\circ}Y$ has a nonempty subset
$(U-\stackrel{\circ}Y) \cap W_v$ satisfying
$f\big((U-\stackrel{\circ}Y)\cap W_v\big) \subset
\stackrel{\circ}X$, we have $f(U- \stackrel{\circ}Y)\subset
\stackrel{\circ}X$, and hence $f(U) \subset X$. Therefore, by
Definition \ref{block}, $\partial X-Y$ is an $(f, X)$-blockading set, and by
Theorem \ref{main1'}, $f$ has a fixed point.
\end{proof}

\vspace{3mm} From Theorem \ref{ball}, we get

\begin{cor} Let $X$ be an
$n$-ball in $\Bbb R^n$, $n \geq 2$, $Y$ be an $(n-1)$-ball in
$\partial X$, and $f: X \rightarrow \mathbb{R}^n$ be a continuous
injection. If $f(Y)=\partial X-\stackrel{\circ}Y$, and there exist a
point $v \in \stackrel{\circ}Y$ and an open neighborhood $W_v$ of
$v$ in $X$ such that $f(W_v) \subset X$, then $f$ has a fixed
point.
\end{cor}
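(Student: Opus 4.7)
My plan is to reduce the corollary directly to Theorem \ref{ball}. The hypothesis of Theorem \ref{ball} requires two things: the existence of a point $v\in\stackrel{\circ}Y$ with an open neighbourhood $W_v$ in $X$ satisfying $f(W_v)\subset X$, which is granted to us verbatim, and the identity $f^{-1}(\partial X-Y)=\stackrel{\circ}Y$, which will be the only non-trivial piece of the argument.

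First I would observe that, since $X$ is compact and $f$ is a continuous injection, the restriction $f|Y\colon Y\to f(Y)=\partial X-\stackrel{\circ}Y$ is a homeomorphism onto its image. In particular $\partial X-\stackrel{\circ}Y$, being homeomorphic to the $(n-1)$-ball $Y$, is itself an $(n-1)$-ball sitting inside the $(n-1)$-sphere $\partial X$; and the two $(n-1)$-balls $Y$ and $\partial X-\stackrel{\circ}Y$ meet exactly along $Y\cap(\partial X-\stackrel{\circ}Y)=\partial Y$, which is the common manifold boundary. Next I would invoke invariance of domain, or equivalently the topological invariance of the manifold boundary of an $(n-1)$-ball (a point is a boundary point precisely when it fails to have a Euclidean neighbourhood, a property preserved by any homeomorphism). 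This forces $f|Y$ to carry manifold boundary to manifold boundary, so $f(\partial Y)=\partial Y$ as subsets of $\partial X$.

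From $f(\partial Y)=\partial Y$, the verification is routine: $f(\stackrel{\circ}Y)=f(Y)-f(\partial Y)=(\partial X-\stackrel{\circ}Y)-\partial Y=\partial X-Y$, so $\stackrel{\circ}Y\subset f^{-1}(\partial X-Y)$. Conversely, if $f(x)\in\partial X-Y\subset f(Y)$, then injectivity gives $x\in Y$, while $x\notin\partial Y$ since $f(\partial Y)=\partial Y\subset Y$ is disjoint from $\partial X-Y$; hence $x\in\stackrel{\circ}Y$. This establishes $f^{-1}(\partial X-Y)=\stackrel{\circ}Y$, and Theorem \ref{ball} then yields a fixed point. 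The only step carrying any real weight is the invariance-of-domain argument that produces $f(\partial Y)=\partial Y$; once that is available, the rest of the proof is pure bookkeeping.
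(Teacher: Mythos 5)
Your proposal is correct and follows essentially the same route as the paper: both reduce the corollary to Theorem \ref{ball} by checking that injectivity together with $f(Y)=\partial X-\stackrel{\circ}Y$ forces $f^{-1}(\partial X-Y)=\stackrel{\circ}Y$. The paper states this implication in a single line without justification, whereas you supply the missing detail (that $f|Y$ is a homeomorphism onto the complementary ball and must preserve the common manifold boundary $\partial Y$, by invariance of the boundary), which is a worthwhile elaboration but not a different argument.
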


\begin{proof} Since $f: X \rightarrow \Bbb R^n$
is a continuous injection, it follows from $f(Y)=\partial
X-\stackrel{\circ}Y$ that $f^{-1}(\partial X-Y)=\stackrel{\circ}Y$.
\end{proof}

\section{Degrees of maps and fixed points}

In this section, we will use several notions of degree of a map given in Section 2 and Theorem \ref{ball} to establish some
fixed point theorems for continuous maps from an $n$-ball to $\mathbb R^n$.
\medskip

The following lemma is obviously true. However, we
still give a simple proof of this lemma, as an example computing the
degrees of some special maps.

\begin{lem} \label{reflect} For given $n \geq 1$,
define the reflection $\gamma_n: \mathbb{R}^n \rightarrow
\mathbb{R}^n$ by $\gamma_n(y, r_n)=(y, -r_n)$ for any $y \in
\mathbb{R}^{n-1}$ and any $r_n \in \mathbb{R}$. Then
$\mathrm{deg}(\gamma_n)=-1$.
\end{lem}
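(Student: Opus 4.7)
The strategy is to peel back Definition~\ref{inf-ext} step by step and reduce $\mathrm{deg}(\gamma_n)$ to the classical degree of a reflection on the sphere $S^{n-1}$, which is a standard computation. Since $\gamma_n$ is a linear isometry of $\mathbb R^n$, one has $\|\gamma_n(x)\|=\|x\|$, so $\gamma_n$ is $\infty$-extensible. Being a homeomorphism of $\mathbb R^n$, it is bijective on every ball; in particular, fix any $n$-ball $V\subset\stackrel{\circ}{B^n}$, and Lemma~\ref{uniform}(1) yields $\mathrm{deg}(\gamma_n)=\mathrm{deg}(\gamma_n|V)\in\{1,-1\}$, the right-hand side interpreted via Definition~\ref{degree-ext}. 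Because $\gamma_n|B^n$ is a self-homeomorphism of the pair $(B^n,S^{n-1})$, Lemma~\ref{equivalence} identifies this extended degree with the classical degree of $\gamma_n|B^n$ in the sense of Definition~\ref{class-deg}(2); Lemma~\ref{deg-bound} then reduces the latter to the classical sphere degree of the restriction. Altogether,
\[
\mathrm{deg}(\gamma_n)=\mathrm{deg}(\gamma_n|S^{n-1}).
\]

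It remains to show that the sphere reflection $r:=\gamma_n|S^{n-1}:S^{n-1}\to S^{n-1}$ has degree $-1$. When $n=1$, the map $r:\{-1,1\}\to\{-1,1\}$ interchanges the two points, and by the remark following Definition~\ref{inf-ext} such a bijection is orientation reversing, so $\mathrm{deg}(r)=-1$.

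For $n\ge 2$, I would use the hemisphere decomposition $S^{n-1}=H_+\cup H_-$ with $H_\pm=\{x\in S^{n-1}:\pm r_n\ge 0\}$, each an $(n-1)$-ball, meeting along the equator $S^{n-2}=S^{n-1}\cap\{r_n=0\}$; under $r$ the equator is fixed pointwise while $H_+$ and $H_-$ are interchanged. A Mayer--Vietoris computation on the cover by the two open hemispheres (equivalently, the suspension isomorphism $\widetilde H_{n-1}(S^{n-1})\cong\widetilde H_{n-2}(S^{n-2})$ together with the fact that swapping the two suspension points changes the sign of the fundamental class) shows that $r_\ast=-\mathrm{id}$ on $\widetilde H_{n-1}(S^{n-1})$, hence $\mathrm{deg}(r)=-1$.

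The principal technical point is the Mayer--Vietoris sign bookkeeping for $n\ge 2$; the reduction of the first paragraph is essentially formal from the Section~2 machinery. In practice the authors may prefer to shortcut the inductive step by invoking the classical fact that any hyperplane reflection of $S^{n-1}$ has degree $-1$ (cf.~\cite{Mau80}).
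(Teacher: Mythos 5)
Your proof is correct, but it takes a genuinely different route from the paper's. The paper never leaves the $n$-sphere $\Bbb R_{\,\infty}^{\,n}$: it chooses a $\gamma_{n+1}$-invariant triangulation $K$ of an $n$-sphere in $\mathbb{R}^{n+1}$, conjugates the natural extension $\widehat{\gamma_n}$ by a homeomorphism $h:\Bbb R_{\,\infty}^{\,n}\to|K|$ intertwining $\gamma_n$ with $\gamma_{n+1}$, and reads off $\beta_*=-\mathrm{id}$ on $H_n(K)$ directly in simplicial homology; the lemma is explicitly offered there as a worked example of such a computation. You instead push the degree down a dimension: Lemma \ref{uniform}, Lemma \ref{equivalence} and Lemma \ref{deg-bound} convert $\mathrm{deg}(\gamma_n)$ into the classical degree of the equatorial reflection of $S^{n-1}$, which you then evaluate by Mayer--Vietoris or the suspension argument (handling $n=1$ by hand via the $0$-sphere swap). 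Your reduction chain checks out: $\gamma_n$ is a linear isometry, hence $\infty$-extensible and bijective on every ball; $\gamma_n|B^n$ is a self-map of the pair $(B^n,S^{n-1})$ with $V\subset\stackrel{\circ}{B^n}$ as Lemma \ref{equivalence} requires; and Remark \ref{rem1}(1) lets you ignore the change of codomain between the two lemmas. What remains is the standard sign computation for a hyperplane reflection of a sphere --- the same classical fact the paper's simplicial computation encodes one dimension up. The trade-off is that your argument leans on the Section 2 comparison machinery and is shorter, while the paper's is self-contained at the level of the $n$-sphere and exhibits the invariant simplicial model explicitly; either is acceptable, though if you keep the Mayer--Vietoris route you should either write out the sign bookkeeping or simply cite the reflection-degree fact from \cite{Mau80} as you suggest.
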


\begin{proof} Take a simplicial complex $K$ in
$\mathbb{R}^{n+1}$ such that if $\bigtriangleup$ is a simplex of $K$
then $\gamma_{n+1}(\bigtriangleup)$ is also a simplex of $K$, and
the polyhedron $|K|$ of $K$ is an $n$-sphere. Take a homeomorphism
$h: \Bbb R_{\;\infty}^{\;n} \rightarrow |K|$ such that $\gamma_{n+1}
\circ h(x)=h \circ \gamma_n(x)$ for any $x \in \mathbb{R}^n$. Define
a map $\beta: |K| \rightarrow |K|$ by $\beta \circ
h(\infty)=h(\infty)$ and $\beta(y)=h \circ \gamma_n \circ h^{-1}(y)$
for any $y \in h(\mathbb{R}^n)$. Then $\beta$ is a homeomorphism,
and both $\beta$ and $\beta^{-1}$ are simplicial maps for the
triangulation $K$ of $|K|$. Let $\beta_*: H_n(K) \rightarrow H_n(K)$
be the isomorphism induced by $\beta$. By computing we can obtain
$\beta_*(\alpha)=-\alpha$ for any $\alpha \in H_n(K)$. Thus
$\mathrm{deg}(\gamma_n)=\mathrm{deg}(\beta)=-1$.
\end{proof}

Let $X$ be a topological space. The {\it\textbf{suspension}} $\Sigma X$ of $X$ is the quotient space of $X\times [-1, 1]$ under
an identification of $X\times \{1\}$ to a point $c_+$ and $X\times\{-1\}$ to a different point $c_-$.
Any $f:X\rightarrow Y$  induces a map $\Sigma f:\Sigma X\rightarrow \Sigma Y$ defined by $\Sigma f(x, t)=(f(x), t)$
for $x\in X$ and $t\in [-1, 1]$.

\medskip
Recall that if $f:X\rightarrow Y$ is continuous, then for each $i$, $f$ induces a homomorphism
$f_i:{\tilde H}_i(X)\rightarrow {\tilde H}_i(Y)$. The following theorem can be seen in \cite[p.29-Theorem 3.2.13]{We14}.

\begin{thm} \label {suspension} (1) For any space $X$ there is an isomorphism $\Sigma: {\tilde H}_{i+1}(\Sigma X)\rightarrow {\tilde H}_i(X)$
for any $i$.

(2) For any $f:X\rightarrow Y$ and $i$, $f_i\circ \Sigma=\Sigma\circ (\Sigma f)_{i+1}$.
\end{thm}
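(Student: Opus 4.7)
The plan is to prove both parts using the Mayer--Vietoris long exact sequence applied to a natural open cover of $\Sigma X$. Set $A=\Sigma X\setminus\{c_-\}$ and $B=\Sigma X\setminus\{c_+\}$; using the quotient description $\Sigma X=X\times[-1,1]/\sim$, one has straight-line deformation retractions of $A$ onto $\{c_+\}$ and of $B$ onto $\{c_-\}$ given by sliding the $t$-coordinate toward $1$ and $-1$ respectively, so both $A$ and $B$ are contractible. The intersection $A\cap B$ is the image of $X\times(-1,1)$ in $\Sigma X$ and deformation retracts onto the equator $X\times\{0\}$, which is canonically homeomorphic to $X$.

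For part $(1)$, I would apply the reduced Mayer--Vietoris sequence for $\Sigma X=A\cup B$, which takes the form
$$\widetilde{H}_{i+1}(A)\oplus \widetilde{H}_{i+1}(B)\to \widetilde{H}_{i+1}(\Sigma X)\xrightarrow{\partial}\widetilde{H}_i(A\cap B)\to \widetilde{H}_i(A)\oplus \widetilde{H}_i(B).$$
Since $A$ and $B$ are contractible, the outer terms vanish and $\partial$ is an isomorphism. Composing with the isomorphism $\widetilde{H}_i(A\cap B)\cong \widetilde{H}_i(X)$ induced by the deformation retraction to the equator produces the required suspension isomorphism $\Sigma:\widetilde{H}_{i+1}(\Sigma X)\to \widetilde{H}_i(X)$.

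For part $(2)$, given $f:X\to Y$, the induced map $\Sigma f$ preserves the $t$-coordinate by construction, so it sends $A_X$ into $A_Y$ and $B_X$ into $B_Y$, and under the canonical identifications its restriction to $A_X\cap B_X$ corresponds to $f:X\to Y$ itself. Naturality of the Mayer--Vietoris connecting homomorphism, which follows from $\partial$ being obtained from a morphism of short exact sequences of singular chain complexes, then yields a commutative square with horizontal maps the suspension isomorphisms and vertical maps $(\Sigma f)_{i+1}$ and $f_i$; this commutativity is precisely $f_i\circ \Sigma=\Sigma\circ (\Sigma f)_{i+1}$. The only step requiring modest care is checking that the deformation retraction used to identify $A\cap B$ with $X$ commutes with $\Sigma f$, which is automatic because $\Sigma f(x,t)=(f(x),t)$ is level-preserving and therefore intertwines the two sliding homotopies. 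No serious obstacle is expected, as this is the classical Mayer--Vietoris derivation of the suspension isomorphism in reduced homology.
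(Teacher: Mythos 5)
Your argument is correct: the Mayer--Vietoris sequence for the open cover $A=\Sigma X\setminus\{c_-\}$, $B=\Sigma X\setminus\{c_+\}$, together with the contractibility of $A$ and $B$ and the identification of $A\cap B$ with the equator, is the standard derivation of the suspension isomorphism, and naturality of the connecting homomorphism gives part (2) since $\Sigma f$ is level-preserving. Note, however, that the paper offers no proof of this statement at all --- it is quoted directly from \cite[Theorem 3.2.13]{We14} as a known result --- so there is no in-paper argument to compare against; your proof simply supplies the classical textbook justification.
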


\begin{lem}\label{equal2} Let $\varphi:
\mathbb{R}^{n-1} \rightarrow \mathbb{R}^{n-1}$ be an
$\infty$-extensible continuous map, $n \geq 2$. Define $f:
\mathbb{R}^n \rightarrow \mathbb{R}^n$ by $f(x, r_n)=(\varphi(x),
r_n)$ for any $x \in \mathbb{R}^{n-1}$ and any $r_n \in \mathbb{R}$.
Then $\mathrm{deg}(f)=\mathrm{deg}(\varphi)$.
\end{lem}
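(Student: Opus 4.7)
The plan is to realise $\widehat{f}$ and the suspension $\Sigma\widehat{\varphi}$ as two different quotients of one and the same product self-map, then to transfer the degree computation via the suspension isomorphism of Theorem \ref{suspension}. By Definition \ref{inf-ext} we have $\deg(f)=\deg(\widehat{f})$ and $\deg(\varphi)=\deg(\widehat{\varphi})$, so it suffices to show $\deg(\widehat{f})=\deg(\widehat{\varphi})$.

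Write $S^{n-1}=\mathbb{R}^{n-1}_{\,\infty}$, fix a homeomorphism $\mu:(-1,1)\to\mathbb{R}$, and define $\widetilde{f}:S^{n-1}\times[-1,1]\to S^{n-1}\times[-1,1]$ by $\widetilde{f}(y,t)=(\widehat{\varphi}(y),t)$. I would consider two continuous surjections out of this product: the suspension quotient $p:S^{n-1}\times[-1,1]\to\Sigma S^{n-1}$ collapsing $S^{n-1}\times\{\pm1\}$ to the cone points $c_\pm$, and the map $q:S^{n-1}\times[-1,1]\to\mathbb{R}^n_{\,\infty}$ given by $q(y,t)=(y,\mu(t))$ for $y\in\mathbb{R}^{n-1}$ and $t\in(-1,1)$, and $q\equiv\infty$ on the closed set $A=(\{\infty\}\times[-1,1])\cup(S^{n-1}\times\{\pm1\})$. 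A case-by-case check shows that $q$ is continuous and that $p\circ\widetilde{f}=\Sigma\widehat{\varphi}\circ p$ and $q\circ\widetilde{f}=\widehat{f}\circ q$. Since $q$ is constant on every fibre of $p$, it factors through $p$, yielding a unique continuous surjection $r:\Sigma S^{n-1}\to\mathbb{R}^n_{\,\infty}$ with $r\circ p=q$; combining the two intertwining relations and using surjectivity of $p$ gives
$$r\circ\Sigma\widehat{\varphi}=\widehat{f}\circ r.$$

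Applying $\tilde H_n$ yields $r_*\circ(\Sigma\widehat{\varphi})_*=\widehat{f}_*\circ r_*$. The crux is that $r_*:\tilde H_n(\Sigma S^{n-1})\to\tilde H_n(\mathbb{R}^n_{\,\infty})$ is an isomorphism; once this is known, the commuting relation on $\mathbb{Z}\cong\tilde H_n$ forces $\deg(\widehat{f})=\deg(\Sigma\widehat{\varphi})$, and Theorem \ref{suspension} identifies the latter with $\deg(\widehat{\varphi})$, so $\deg(f)=\deg(\varphi)$. To see the iso, note that $r$ is a bijection outside its one non-trivial fibre $B=r^{-1}(\infty)$, which is the arc $\{c_-,c_+\}\cup\{p(\infty,t):t\in(-1,1)\}$ homeomorphic to $[-1,1]$ via the $t$-parametrisation and therefore contractible. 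Equipping $S^{n-1}=\mathbb{R}^{n-1}_{\,\infty}$ with the CW structure having one $0$-cell at $\infty$ and one $(n-1)$-cell makes $B$ a subcomplex of $\Sigma S^{n-1}$; the long exact sequence of the good pair $(\Sigma S^{n-1},B)$, combined with the quotient isomorphism $H_n(\Sigma S^{n-1},B)\cong\tilde H_n(\Sigma S^{n-1}/B)=\tilde H_n(\mathbb{R}^n_{\,\infty})$, supplies the required iso, induced by $r$. This final homological step is the main obstacle; everything preceding it is a formal verification of commutativity and elementary quotient-space manipulations.
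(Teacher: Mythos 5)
Your proposal is correct and follows essentially the same route as the paper's proof: both pass to the suspension $\Sigma\mathbb{R}^{n-1}_{\,\infty}$, use Theorem \ref{suspension} to identify $\deg(\widehat{\varphi})$ with $\deg(\Sigma\widehat{\varphi})$, collapse the contractible arc over $\infty$ (your $B$ is the paper's $J$) to get $\mathbb{R}^n_{\,\infty}$ with an induced isomorphism on $\tilde H_n$, and transfer the degree via the intertwining relation with $\widehat{f}$. The only cosmetic difference is that you build the comparison map $r$ in one step as a factorization of $q$ through the suspension quotient, where the paper factors it as the collapse $\psi:K\to K/J$ followed by the homeomorphism $\tilde\eta:K/J\to\mathbb{R}^n_{\,\infty}$.
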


\begin{proof}
Let $\tilde \varphi:\mathbb R^{n-1}_\infty\rightarrow \mathbb R^{n-1}_\infty$
and $\tilde f:\mathbb R^{n}_\infty\rightarrow \mathbb R^{n}_\infty$ be the natural extensions of $\varphi$
and $f$ respectively. Let $K=\Sigma \mathbb R^{n-1}_\infty$ be the suspension of $\mathbb R^{n-1}_\infty$.
By Theorem \ref{suspension}, there is an isomorphism  $\Sigma: {\tilde H}_{n}(K)\rightarrow {\tilde H}_{n-1}(\mathbb R^{n-1}_\infty)$
such that $\tilde\varphi_{n-1}\circ \Sigma=\Sigma\circ (\Sigma \tilde\varphi)_{n}$. This implies ${\rm deg}(\tilde\varphi)={\rm deg}(\Sigma \tilde\varphi)$.

Let $J=\{\infty\}\times [-1, 1]\subset K$ and let $K/J$ be the quotient space by identifying $J$ to a point
and let $\psi:K\rightarrow K/J$ be the quotient map. Since $J$ is contractible, $\psi_n: \tilde H_n(K)\rightarrow \tilde H_n(K/J)$
is an isomorphism by the exactness of the homology sequence and the isomorphism $\tilde H_n(K/J)\cong H_n(K, J)$ (see \cite[p.28-Theorem 3.2.9 (2)]{We14}). By the definition of $\Sigma \tilde\varphi$, there is a map $\xi:K/J\rightarrow K/J$ with
$\psi\circ\Sigma\tilde\varphi=\xi\circ \psi$. Then we have ${\rm deg}(\Sigma \tilde\varphi)={\rm deg}(\xi)$.

Take an increasing homeomorphism $\mu: (-1, 1) \rightarrow \mathbb{R}$.
Define a map $\eta: K
\rightarrow \mathbb R_{\;\infty}^{\;n}$ by
$\eta(J)=\{\infty\}$ and $\eta(y, t)=\big(y, \mu(t)\big)$ for any
$y \in K-J$ and any $t \in  (-1, 1)$. Then $\eta$ induces a
homeomorphism $\tilde\eta:K/J\rightarrow \mathbb R_{\;\infty}^{\;n}$.
It is direct to check that $\tilde f\circ \tilde \eta=\tilde\eta\circ\xi$.
So ${\rm deg}(\xi)={\rm deg}(\tilde f)$.

From the above arguments, we see that ${\rm deg}(\tilde\varphi)={\rm deg}(\tilde f)$, which means
${\rm deg}(\varphi)={\rm deg(f)}$.
\end{proof}

Define projections $p: \Bbb R^n \rightarrow \Bbb
R^{n-1}$ and $q: \Bbb R^n \rightarrow \Bbb R$ by
$$\hspace{48mm} p(y,r)=y, \ \ \ \ \ \ q(y,r)=r, \hspace{56mm} \mbox{(3.1)}$$
for any $y \in \Bbb R^{n-1}$ and any $r \in \Bbb R$. Write $\Bbb
R_+^{\,n}=q^{-1}\big([0, \infty)\big)$, $\Bbb
R_-^{\,n}=q^{-1}\big((-\infty, 0]\big)$. We have

\begin{lem}\label{ball-degree} Let $E$ be an
$(n-1)$-ball in $\mathbb{R}^{n-1}$ and $D$ be an $n$-ball in
$\mathbb{R}_+^{\,n}$ such that $D \cap \mathbb{R}^{n-1}=E$, $n \geq
2$. Let $f: D \rightarrow \mathbb{R}^n$ be a continuous injection
which satisfies $f(D) \cap \mathbb{R}^{n-1}=f(E)$. Write $f_E=f|E: E
\rightarrow \mathbb{R}^{n-1}$. Then

\vspace{3mm} $(1)$\ $f(D) \subset \mathbb{R}_+^{\,n}$ or $f(D)
\subset \mathbb{R}_-^{\,n}$;

\vspace{3mm} $(2)$\ If $f(D) \subset \mathbb{R}_+^{\,n}$ then
$\mathrm{deg}(f)=\mathrm{deg}(f_E)$, and if $f(D) \subset
\mathbb{R}_-^{\,n}$ then $\mathrm{deg}(f)=-\mathrm{deg}(f_E)$.
\end{lem}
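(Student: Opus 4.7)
The plan divides into the two claims.

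For claim (1), note that $\stackrel{\circ}{D}$ is connected (homeomorphic to an open $n$-ball). Using the hypothesis $f(D)\cap\mathbb{R}^{n-1}=f(E)$ together with injectivity, $f(\stackrel{\circ}{D})$ avoids $\mathbb{R}^{n-1}$: any $x\in\stackrel{\circ}{D}$ with $f(x)\in\mathbb{R}^{n-1}$ would satisfy $f(x)\in f(E)$, hence $x\in E\subset\partial D$, a contradiction. So $f(\stackrel{\circ}{D})$ is a connected subset of $\mathbb{R}^n\setminus\mathbb{R}^{n-1}$, which must lie entirely in one of the two open half-spaces. Taking closures and using $\overline{\stackrel{\circ}{D}}=D$ yields $f(D)\subset\mathbb{R}_+^n$ or $f(D)\subset\mathbb{R}_-^n$.

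For claim (2), I first reduce to the case $f(D)\subset\mathbb{R}_+^n$. If $f(D)\subset\mathbb{R}_-^n$, set $g:=\gamma_n\circ f$; then $g(D)\subset\mathbb{R}_+^n$ and $g_E=f_E$ (since $\gamma_n|\mathbb{R}^{n-1}=\mathrm{id}$). Applying Lemma~\ref{composition} with $\eta=\gamma_n|f(D)$, together with Lemma~\ref{reflect}, gives $\deg(g)=\deg(\gamma_n|f(D))\cdot\deg(f)=-\deg(f)$; so establishing $\deg(g)=\deg(g_E)$ in the positive case yields $\deg(f)=-\deg(f_E)$ in the negative case.

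Assume therefore $f(D)\subset\mathbb{R}_+^n$, aiming for $\deg(f)=\deg(f_E)$. Fix $v_0\in\stackrel{\circ}{E}$. A local-flatness observation: for sufficiently small $\rho,\epsilon>0$, the closed Euclidean half-box $B^+:=B^{n-1}(v_0,\rho)\times[0,\epsilon]$ is contained in $D$. Indeed, $\stackrel{\circ}{E}$ is open in $\partial D$ by invariance of domain and lies in the flat hyperplane $\mathbb{R}^{n-1}$, so the slab $T:=B^{n-1}(v_0,\rho)\times(-\epsilon,\epsilon)$ meets $\partial D$ only in $B^{n-1}(v_0,\rho)\times\{0\}$; each of the two connected pieces of $T\setminus\partial D$ is either contained in $\stackrel{\circ}{D}$ or in $\mathbb{R}^n\setminus D$, and the upper piece must lie in $\stackrel{\circ}{D}$ because $D\subset\mathbb{R}_+^n$ and $v_0\in\overline{\stackrel{\circ}{D}}$. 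Set $E_0:=B^{n-1}(v_0,\rho)$, $B^-:=B^{n-1}(v_0,\rho)\times[-\epsilon,0]$, and $B:=B^+\cup B^-$. By Lemma~\ref{existence} applied to $f_E$, and possibly shrinking $\rho$, there is an $\infty$-extensible $\varphi:\mathbb{R}^{n-1}\to\mathbb{R}^{n-1}$ with $\varphi|E_0=f_E|E_0$ and $\varphi$ bijective on $E_0$, so $\deg(\varphi)=\deg(f_E)$. Define $G:\mathbb{R}^n\to\mathbb{R}^n$ by $G(y,r):=(\varphi(y),r)$; $G$ is $\infty$-extensible and bijective on $B^-$, and Lemma~\ref{equal2} gives $\deg(G)=\deg(\varphi)$.

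Now form the hybrid continuous injection $F:B\to\mathbb{R}^n$ by $F|B^+:=f|B^+$ and $F|B^-:=G|B^-$. Continuity at $E_0\times\{0\}$ holds because both prescriptions yield $(f_E(y),0)$, and injectivity follows because $F(B^+)\subset\mathbb{R}_+^n$ and $F(B^-)\subset\mathbb{R}_-^n$ meet only along $\mathbb{R}^{n-1}$, where the common restriction $f_E|E_0=\varphi|E_0$ is injective. The identities now chain together: by Lemma~\ref{restrict} applied to $F:B\to\mathbb{R}^n$, $\deg(F|B^+)=\deg(F)=\deg(F|B^-)$; since $F|B^+=f|B^+$ and $F|B^-=G|B^-$ as maps, $\deg(F|B^+)=\deg(f|B^+)$ and $\deg(F|B^-)=\deg(G|B^-)$; Lemma~\ref{restrict} with $B^+\subset D$ gives $\deg(f|B^+)=\deg(f)$, and Lemma~\ref{uniform} applied to $G$ and $B^-$ gives $\deg(G|B^-)=\deg(G)$. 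Combining, $\deg(f)=\deg(G)=\deg(f_E)$. The main anticipated obstacle is the local-flatness step, since the embedded $n$-ball $D$ may be wild; but because $\partial D$ near $v_0$ lies in the flat $\mathbb{R}^{n-1}$, the component-dichotomy argument is elementary.
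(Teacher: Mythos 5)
Your proof is correct and rests on the same core idea as the paper's: extend $f$ across $\mathbb{R}^{n-1}$ by a product map $(y,r)\mapsto(\varphi(y),r)$ built from an $\infty$-extensible locally bijective pseudo-extension $\varphi$ of $f_E$, and compare degrees using Lemma~\ref{restrict} together with the identity $\deg(\varphi\times \mathrm{id})=\deg(\varphi)$ of Lemma~\ref{equal2}. The differences are in execution. First, you dispose of the $\mathbb{R}_-^{\,n}$ case at the outset by composing with $\gamma_n$ and invoking Lemmas~\ref{composition} and~\ref{reflect}, where the paper carries the sign $\lambda$ through a single computation via $\psi=\gamma_n\circ\phi$; this is cosmetic. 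Second, and more substantively, the paper glues the full prism $P=E\times[-1,0]$ onto $D$ and applies Lemma~\ref{restrict} to $Q=D\cup P$, which tacitly requires $Q$ to be an $n$-ball --- a point that needs a collaring argument when $D$ (or $E$) is wildly embedded. You instead localize to the standard box $B=B^{n-1}(v_0,\rho)\times[-\epsilon,\epsilon]$, whose ball structure is obvious, at the price of the local-flatness argument showing $B^{+}\subset D$; that argument is sound precisely because $\partial D$ near $v_0$ lies in the flat hyperplane $\mathbb{R}^{n-1}$, so the dichotomy for the two components of $T-\partial D$ (lower one in $\mathbb{R}^n-D$ since $D\subset\mathbb{R}_+^{\,n}$, upper one in $\stackrel{\circ}{D}$ since $v_0\in\overline{\stackrel{\circ}{D}}$) is elementary, and bijectivity of $\varphi$ on the ball supplied by Lemma~\ref{existence} passes to the smaller round ball $E_0$. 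Your version also supplies the connectedness argument for part (1), which the paper dismisses as trivial. On balance the two proofs buy the same theorem by the same mechanism, but yours is the more careful at the gluing step.
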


\begin{proof} (1) is trivial.

\vspace{3mm} (2) Let
$$\lambda=\left\{\begin{array}{cl} 1, &
\mbox{\ if\ } f(D) \subset \mathbb{R}_+^{\,n};\\
-1, & \mbox{\ if\ } f(D) \subset
\mathbb{R}_-^{\,n}.\end{array}\right.$$ Write $P=E \times [-1,0]$.
Define $f_P: P \rightarrow \mathbb{R}^n$ by $f_P(x,
r_n)=\big(f_E(x), \lambda r_n\big)$ for any $x \in E \subset
\mathbb{R}^{n-1}$ and any $r_n \in [-1, 0]$. Write $Q=D \cup P$.
Define $f_Q: Q \rightarrow \mathbb{R}^n$ by $f_Q|D=f$ and
$f_Q|P=f_P$. Noting that $f_Q$ is a continuous injection, by Lemma
\ref{restrict} we have $\mathrm{deg}(f)=\mathrm{deg}(f_Q)=\mathrm{deg}(f_P)$.

\vspace{3mm} Let $\varphi: \mathbb{R}^{n-1} \rightarrow
\mathbb{R}^{n-1}$ be an $\infty$-extendible locally bijective
pseudo-extension of $f_E$. Define $\phi=\varphi \times
id_{\mathbb{R}}: \mathbb{R}^n \rightarrow \mathbb{R}^n$ by $\phi(x,
r_n)=(\varphi(x), r_n)$ for any $x \in \mathbb{R}^{n-1}$ and any
$r_n \in \mathbb{R}$. Then by Lemma \ref{equal2} we have
$\mathrm{deg}(\phi)=\mathrm{deg}(\varphi)=\mathrm{deg}(f_E)$.

\vspace{3mm} Let the reflection $\gamma_n: \mathbb{R}^n \rightarrow
\mathbb{R}^n$ be the same as in Lemma \ref{reflect}. Define $\psi:
\mathbb{R}^n \rightarrow \mathbb{R}^n$ by
$$\psi=\left\{\begin{array}{cl} \phi, &
\mbox{\ if\ } f(D) \subset \mathbb{R}_+^{\,n};\\
\gamma_n \circ \phi, & \mbox{\ if\ } f(D) \subset
\mathbb{R}_-^{\,n}.\end{array}\right.$$ Then
$\mathrm{deg}(\psi)=\lambda \cdot \mathrm{deg}(\phi)$, and $\psi$ is
an $\infty$-extensible locally bijective pseudo-extension of $f_P$,
and hence $\mathrm{deg}(\psi)=\mathrm{deg}(f_P)$. To sum up, we get
$\mathrm{deg}(f)=\mathrm{deg}(f_D)=\mathrm{deg}(f_P)=\mathrm{deg}(\psi)=\lambda
\cdot \mathrm{deg}(\phi)=\lambda \cdot \mathrm{deg}(f_E)$.
\end{proof}

\begin{prop} \label{degree-img} Let $X$ be an
$n$-ball in $\mathbb{R}^n$, $n \geq 2$, and $f: X \rightarrow
\mathbb{R}^n$ be a continuous map. Suppose that there exist an
$n$-ball $D$ in $X$ and an $(n-1)$-ball $E$ in $\partial X$ such
that $D \cap \partial X=E$, $f(D) \cap
\partial X=f(E)$, and $f$ is bijective on $D$. Let $f_D=f|D: D
\rightarrow \mathbb{R}^n$ and $f_E=f|E: E \rightarrow \partial X$.
Then

\vspace{3mm} $(1)$\ $f(D-E) \subset \stackrel{\circ}X$ or $f(D-E)
\subset \Bbb R^n-X$;

\vspace{3mm} $(2)$\ If $f(D-E) \subset \stackrel{\circ}X$ then
$\mathrm{deg}(f_D)=\mathrm{deg}(f_E)$, and if $f(D-E) \subset \Bbb
R^n-X$ then $\mathrm{deg}(f_D)=-\mathrm{deg}(f_E)$. $($See (2) of Definition \ref{degree-3}
for the meaning of $\mathrm{deg}(f_E)$.$)$
\end{prop}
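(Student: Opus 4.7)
The plan is to prove (1) by a direct connectedness argument and (2) by reducing to Lemma \ref{ball-degree} via a local straightening near an interior point of $E$.

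For (1), I would first note that $D - E$ is connected. Since $E$ is an $(n-1)$-ball and hence a proper closed subset of the $(n-1)$-sphere $\partial D$, the complement $\partial D - E$ is nonempty, and combined with the connected dense open set $\stackrel{\circ}D$ this makes $D - E = \stackrel{\circ}D \cup (\partial D - E)$ connected; hence so is $f(D-E)$. The hypothesis $f(D) \cap \partial X = f(E)$, combined with the bijectivity of $f$ on $D$, yields $f(D-E) \cap \partial X = \emptyset$. By the Jordan--Brouwer separation theorem recalled in Section 3, $\mathbb{R}^n - \partial X$ decomposes into the two components $\stackrel{\circ}X$ and $\mathbb{R}^n - X$, so the connected set $f(D-E)$ lies entirely in one of them, which proves (1).

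For (2), I would reduce to the flat model of Lemma \ref{ball-degree}. Since both $\mathrm{deg}(f_D)$ and $\mathrm{deg}(f_E)$ are preserved under restriction to sub-balls (Lemma \ref{restrict} for $\mathrm{deg}(f_D)$, and the corresponding reduction built into Definition \ref{degree-3}(2) for $\mathrm{deg}(f_E)$), I may replace $D$ by a sufficiently small sub-$n$-ball $D' \subset D$ around a chosen interior point $v \in \stackrel{\circ}E$, with $E' \equiv D' \cap \partial X$ a small sub-$(n-1)$-ball of $\stackrel{\circ}E$. Next I would fix a homeomorphism of pairs $\alpha : (D', E') \to (B^n \cap \mathbb{R}_+^{\,n}, B^n \cap \mathbb{R}^{n-1})$, which exists because the pair consisting of an $n$-ball with an $(n-1)$-ball on its boundary is unique up to homeomorphism, and a compatible homeomorphism $\beta$ sending a small ball in $\partial X$ containing $f(E')$ into $\mathbb{R}^{n-1}$ and extending so as to embed the adjacent portion of $X$ or $\mathbb{R}^n - X$ into $\mathbb{R}_+^{\,n}$ or $\mathbb{R}_-^{\,n}$ respectively.

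After these straightenings, the conjugated map $\beta \circ f \circ \alpha^{-1}$ fits the hypotheses of Lemma \ref{ball-degree}, with part (1) of the present proposition matching Lemma \ref{ball-degree}(1) ($f(D-E) \subset \stackrel{\circ}X$ corresponds to the straightened image lying in $\mathbb{R}_+^{\,n}$, and similarly for the other case), and Lemma \ref{ball-degree}(2) combined with invariance of degree under conjugation (Lemma \ref{deg-conj}) then delivers $\mathrm{deg}(f_D) = \pm\, \mathrm{deg}(f_E)$ with the correct sign. The principal obstacle is that $\partial X$ may be wildly embedded in $\mathbb{R}^n$ (e.g.\ as an Alexander horned sphere), so no global homeomorphism of $\mathbb{R}^n$ straightens $\partial X$ onto a hyperplane; this is circumvented by the pseudo-extension framework of Definition \ref{degree-ext}, under which the degrees depend only on local data on small sub-balls. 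Sufficient localization in the first step reduces the problem to a purely local comparison, where the needed straightening of $(D', E')$ into $(\mathbb{R}_+^{\,n}, \mathbb{R}^{n-1})$ always exists as an abstract homeomorphism of pairs and the invariance of the degree definitions absorbs the wildness of the ambient embedding.
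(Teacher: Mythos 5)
Your part (1) is correct and is essentially the argument the paper leaves implicit (connectedness of $D-E$, disjointness of $f(D-E)$ from $\partial X$ via injectivity, Jordan--Brouwer). The gap is in part (2). You conjugate $f$ by two \emph{independently chosen} charts: $\alpha$ straightening $(D',E')$ and $\beta$ straightening a neighbourhood of $f(E')$. But both $\mathrm{deg}(f_D)$ (Definition \ref{degree-ext}) and $\mathrm{deg}(f_E)$ (Definition \ref{degree-3}(2)) are computed relative to a single frame containing the domain \emph{and} the image: Lemma \ref{deg-conj} gives invariance only under conjugation by one homeomorphism $\eta$ defined on a ball containing $V\cup W$, and by Lemmas \ref{composition} and \ref{inverse} your two-chart conjugation multiplies the degree by $\mathrm{deg}(\beta)\cdot\mathrm{deg}(\alpha)$. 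Since $\beta$ may always be postcomposed with a reflection, the sign of $\mathrm{deg}(\beta\circ f\circ\alpha^{-1})$ is not determined by your construction; the word ``compatible'' is carrying the entire content of the proposition. The remark following Definition \ref{degree-3} (the M\"obius-strip example) is precisely a warning that comparing boundary degrees through two unrelated charts is ill-defined. The same ambiguity infects $\mathrm{deg}(f_{E'})$ itself: after shrinking $D$ to a small $D'$ about $v$, the balls $E'$ and $f(E')$ are in general disjoint, and Definition \ref{degree-3}(2) requires one $(n-1)$-ball $Y\subset\partial X$ containing both and one embedding of $Y$ into $\mathbb{R}^{n-1}$; your $\alpha|E'$ and $\beta$ are not restrictions of such a single embedding.

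The paper closes exactly this hole: after arranging (WLOG) that $E$ is benign in $\partial D$ and $\partial X-E-f(E)\neq\emptyset$, it chooses one $(n-1)$-ball $Q\subset\partial X$ with $E\cup f(E)\subset Q$ and one embedding $\eta:X\to\mathbb{R}_+^{\,n}$ with $\eta(X)\cap\mathbb{R}^{n-1}=\eta(Q)$, so that $E$ and $f(E)$ are flattened by the same chart and Lemma \ref{ball-degree} applies verbatim; in the case $f(D-E)\subset\mathbb{R}^n-X$, where $f(D)$ leaves the domain of $\eta$, the image is sent onto the slab $f_{E'}(E')\times[-1,0]$ by a homeomorphism $\eta_0$ pinned down by the requirement $\eta_0|f(E)=\eta|f(E)$. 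To rescue your localized version you must make ``compatible'' mean exactly this --- $\alpha$ and $\beta$ restrict to a single embedding of one ball in $\partial X$ containing both $E'$ and $f(E')$ --- at which point you have reconstructed the paper's argument. A secondary issue: your collared extension of $\beta$ into ``the adjacent portion of $\mathbb{R}^n-X$'' is not automatic when $\partial X$ is wildly embedded; the paper avoids this by straightening only the ball $f(D)$ itself, attached along $f(E)$.
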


\begin{proof} (1) It is clear, since $f(D) \cap
\partial X=f(E) \subset \partial X$, and $f$ is bijective on $D$.

\vspace{3mm} (2)\ We may assume that $E$ is benign in $\mathbb \partial D$ (that is $\partial D-\stackrel{\circ}E$
is also an $(n-1)$-ball; see Remark \ref{rem1}), and $\partial X-E-f(E) \neq \emptyset$,
since, if not, we can take an $n$-ball $D_0 \subset D$ and
$(n-1)$-ball $E_0 \subset E$ such that $D_0 \cap \partial X=E_0$,
$\partial X-E_0-f(E_0) \neq \emptyset$, and $\partial
D_0-\stackrel{\circ}E_0$ is an $(n-1)$-ball, and then we can replace
$D$ by $D_0$.

\vspace{3mm} Take an $(n-1)$-ball $Q$ in $\partial X$ such that $E
\cup f(E) \subset Q$ and $\partial X-\stackrel{\circ}Q$ is also an
$(n-1)$-ball, and then take an imbedding $\eta: X \rightarrow
\mathbb{R}_+^{\,n}$ such that $\eta(X) \cap
\mathbb{R}^{n-1}=\eta(Q)$. Write $X'=\eta(X)$, $D'=\eta(D)$, and
$E'=\eta(E)$. Let $f_{E'}=\eta \circ f \eta^{-1}|E': E' \rightarrow
\partial X'$. Then $f_{E'}(E')=\eta \circ f(E) \subset \eta(Q) \subset
\mathbb{R}^{n-1}$.

\vspace{3mm} (a)\ If $f(D-E) \subset \stackrel{\circ}X$, then, put
$f_{D'}=\eta \circ f \circ \eta^{-1}|D': D' \rightarrow X'$. From
Lemmas \ref{deg-conj} and \ref{ball-degree}, we get
$\mathrm{deg}(f_D)=\mathrm{deg}(f_{D'})=\mathrm{deg}(f_{E'})=\mathrm{deg}(f_E)$.

\vspace{3mm} (b)\ If $f(D-E) \subset \Bbb R^n-X$, then, write
$W=f(D)$, $W'=f_{E'}(E') \times [-1, 0]$, and take a homeomorphism
$\eta_0: W \rightarrow W'$ such that $\eta_0|f(E)=\eta|f(E)$. Let
$f_{D'}=\eta_0 \circ f \eta^{-1}|D': D' \rightarrow W'$ (the existence of such $\eta_0$ is implied by
the benignity of $E$ in $\partial D$ and the injectivity of $f|D$; see Remark \ref{rem1}). Then by
Lemmas \ref{deg-conj} and \ref{ball-degree}, and Definition \ref{degree-3}, we get
$\mathrm{deg}(f_D)=\mathrm{deg}(f_{D'})=-\mathrm{deg}(f_{E'})=-\mathrm{deg}(f_E)$.
\end{proof}

\begin{prop}\label{deg-int} Let $X$ be an
$n$-ball in $\mathbb{R}^n$, $n \geq 2$, $Y$ be an $(n-1)$-ball in
$\partial X$, and $D$ be an $n$-ball in $X$ such that $E\equiv
\partial D \cap \partial X$ is an $(n-1)$-ball in
$\stackrel{\circ}Y$. Let $f: X \rightarrow \mathbb{R}^n$ be a
continuous map such that $f(Y) \subset \partial
X-\stackrel{\circ}Y$, $f(\partial Y) \subset \partial Y$, $f(D) \cap
\partial X=f(E) \subset \partial X-Y$, and $f$ is bijective on $D$.
Let $f_D=f|D: D \rightarrow \mathbb{R}^n$ and $f_{\partial
Y}=f|\partial Y: \partial Y \rightarrow
\partial Y$.  Then

\vspace{3mm} $(1)$\ $f(D-E) \subset \stackrel{\circ}X$ or $f(D-E)
\subset \Bbb R^n-X$;

\vspace{3mm} $(2)$\ If $f(D-E) \subset \stackrel{\circ}X$ then
$\mathrm{deg}(f_D)=-\mathrm{deg}(f_{\partial Y})$. If $f(D-E)
\subset \Bbb R^n-X$ then $\mathrm{deg}(f_D)=\mathrm{deg}(f_{\partial
Y})$.
\end{prop}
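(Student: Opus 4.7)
The plan has three stages. First, part (1) is immediate from Proposition~\ref{degree-img}(1): the hypotheses there hold here because $D\cap\partial X=\partial D\cap\partial X=E$, $f(D)\cap\partial X=f(E)$ is assumed, and $f|D$ is bijective; hence $f(D-E)\subset\stackrel{\circ}X$ or $f(D-E)\subset\mathbb{R}^n-X$. Second, Proposition~\ref{degree-img}(2) gives $\mathrm{deg}(f_D)=\mathrm{deg}(f_E)$ in the first case and $\mathrm{deg}(f_D)=-\mathrm{deg}(f_E)$ in the second, so the whole proof of (2) reduces to the single identity
\[
\mathrm{deg}(f_E)=-\mathrm{deg}(f_{\partial Y}).
\]

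The core step is to prove this identity by turning $f|Y$ into a self-map of the $(n-1)$-ball $Y$. Using the bi-collar structure of $\partial Y$ in the sphere $\partial X$, I would produce an $(n-1)$-ball $Q\subset\partial X$ containing $\partial Y\cup E\cup f(E)$ and splitting as $Q=Q_Y\cup Q_{Y^c}$, where $Q_Y\subset Y$ and $Q_{Y^c}\subset\partial X-\stackrel{\circ}Y$ are $(n-1)$-balls glued along the $(n-2)$-ball $Q\cap\partial Y$, with $E\subset Q_Y$ and $f(E)\subset Q_{Y^c}$. Then choose a homeomorphism $\rho:Q_{Y^c}\to Q_Y$ restricting to the identity on $Q\cap\partial Y$, and extend it to a continuous map $\bar\rho:\partial X-\stackrel{\circ}Y\to Y$; the composition $g=\bar\rho\circ(f|Y):Y\to Y$ is a continuous self-map satisfying $g|\partial Y=f_{\partial Y}$ and $g|E=\rho\circ(f|E)$.

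Three computations then close the argument. Lemma~\ref{deg-bound} yields $\mathrm{deg}(g)=\mathrm{deg}(g|\partial Y)=\mathrm{deg}(f_{\partial Y})$. Lemma~\ref{equivalence}, after shrinking $E$ via Lemma~\ref{existence} to a sub-ball on which $g$ is bijective in the sense of Definition~\ref{bij} and then applying Lemma~\ref{restrict}, yields $\mathrm{deg}(g)=\mathrm{deg}(g|E)$. Lemma~\ref{composition}, together with a Euclidean chart identifying $Q$ with an $(n-1)$-ball in $\mathbb{R}^{n-1}$ that carries $Q_Y$ and $Q_{Y^c}$ to opposite closed half-spaces, makes $\rho$ conjugate to the reflection $\gamma_{n-1}$ of Lemma~\ref{reflect}, so that $\mathrm{deg}(\rho)=-1$ and hence $\mathrm{deg}(g|E)=\mathrm{deg}(\rho)\cdot\mathrm{deg}(f_E)=-\mathrm{deg}(f_E)$. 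Combining these three equalities gives $\mathrm{deg}(f_E)=-\mathrm{deg}(f_{\partial Y})$, which in conjunction with the second stage gives exactly the signs claimed in part (2).

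The main obstacle is the construction of $Q$ with its bilateral split $Q=Q_Y\cup Q_{Y^c}$ and of the reflection-like $\rho$: one has to build a bi-collar of $\partial Y$ in $\partial X$ out of the one-sided collars of $\partial Y$ in $Y$ and in $\partial X-\stackrel{\circ}Y$, and then enlarge it so as to swallow the sub-balls $E$ and $f(E)$ while retaining the product coordinate across $\partial Y$ that makes $\rho$ a reflection in some chart. Once $Q$ is in place, the identification $\mathrm{deg}(\rho)=-1$ is a direct application of Lemma~\ref{reflect} in that chart, and the remaining bookkeeping is the standard degree calculus already developed in Section~2.
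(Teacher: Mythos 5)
Your first two stages are correct and coincide with the paper: part (1) and the reduction of part (2) to the single identity $\mathrm{deg}(f_E)=-\mathrm{deg}(f_{\partial Y})$ via Proposition \ref{degree-img} is exactly the paper's Claim 1, and your overall strategy for proving that identity --- compose $f|Y$ with an orientation-reversing map carrying $\partial X-\stackrel{\circ}Y$ back into $Y$ while fixing $\partial Y$ pointwise, then compute the degree of the resulting self-map of $Y$ once on $\partial Y$ and once on a small ball inside $E$ --- is also the paper's. The genuine gap is in your construction of the flip. First, the bi-collar of $\partial Y$ in $\partial X$ need not exist: for $n\geq 4$ the $(n-2)$-sphere $\partial Y$ may be wildly embedded in the $(n-1)$-sphere $\partial X$, in which case $\partial X-\stackrel{\circ}Y$ need not be an $(n-1)$-ball and $\partial Y$ need not be collared from that side, so there is no product coordinate across $\partial Y$ in which $\rho$ becomes the reflection $\gamma_{n-1}$. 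Second, the ball $Q$ as described is internally inconsistent: if $\partial Y\subset Q$ then $Q\cap\partial Y=\partial Y$ is an $(n-2)$-sphere, not an $(n-2)$-ball, and two $(n-1)$-balls $Q_Y\subset Y$ and $Q_{Y^c}\subset\partial X-\stackrel{\circ}Y$ glued along it would form an $(n-1)$-sphere rather than an $(n-1)$-ball. Third, even granting $Q$ and $\rho$, taking $\bar\rho$ to be an arbitrary extension supplied by the AR property gives no control over $\bar\rho^{-1}\bigl(\rho(f(E))\bigr)$, so the bijectivity of $g$ on a sub-ball of $E$ in the sense of Definition \ref{bij} --- which Lemma \ref{equivalence} requires --- is not established; Lemma \ref{existence} constructs pseudo-extensions of a given injection and cannot be used to shrink $E$ until a given global map becomes bijective on it.

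The paper circumvents all three difficulties at once. Instead of reflecting across $\partial Y$, it places a small tame $(n-1)$-ball $V$ in the interior of $W=f(E)$ so that $Z=\partial X-\stackrel{\circ}V$ is an $(n-1)$-ball with $Y\subset\stackrel{\circ}Z$, takes the involution $h$ of $\partial X$ swapping $V$ and $Z$ (conjugate to $\gamma_n|S^{n-1}$, hence of degree $-1$), and then invokes Bing's retraction theorem (Theorem \ref{bing}), which is designed precisely for wild spheres, to obtain a retraction $\alpha:Z\rightarrow Y$ with $\alpha(Z-Y)\subset\partial Y$; the composite $\xi=\alpha\circ\psi\circ f_Y$ is then verified to be bijective on $E_0=f^{-1}\bigl(h^{-1}(E)\bigr)$ because each factor is bijective on the relevant set. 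To salvage your version you would have to either assume $Y$ benign in $\partial X$ (which is not part of the hypotheses) or replace the bi-collar construction by such a global reflection-plus-retraction.
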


\begin{proof} (1)\ The proof is similar to
Proposition \ref{degree-img}.

\vspace{3mm} (2)\ Let $f_E=f|E: E \rightarrow \partial X$. Then the
conclusion (2) of Proposition \ref{degree-img} is still true. Further, we have

\vspace{3mm} {\textbf{Claim 1.}}\
$\mathrm{deg}(f_E)=-\mathrm{deg}(f_{\partial Y})$.

\vspace{3mm} {\textbf{Proof of Claim 1.}}\ Write $W=f(E)$. Then
$W$ is an $(n-1)$-ball in $\partial X-Y$. Take an $(n-1)$-ball $V
\subset \stackrel{\circ}W$ such that $\partial X-\stackrel{\circ}V$
is also an $(n-1)$-ball. Write $Z=\partial X-\stackrel{\circ}V$.
Then $\stackrel{\circ}Z \supset Y$, $\partial Z=\partial V$, and
there exists a homeomorphism $\eta:
\partial X \rightarrow S^{n-1}$ such that $\eta(\partial
V)=S^{n-2}$ (see (2) of Remark \ref{rem1}). Let the reflection $\gamma_n: \mathbb{R}^n \rightarrow
\mathbb{R}^n$ be as in Lemma \ref{reflect}. Let $h'=\gamma_n|S^{n-1}: S^{n-1}
\rightarrow S^{n-1}$, and let $h=\eta^{-1} \circ h' \circ \eta:
\partial X \rightarrow \partial X$. Then $h$ is a homeomorphism,
$h|\partial V=id$, $h(V)=Z$, $h(Z)=V$, and $h^2=id$. Write
$V_0=h^{-1}(E)$. Then $V_0 \subset h^{-1}(\stackrel{\circ}Y)\subset
h^{-1}(\stackrel{\circ}Z)=\stackrel{\circ}V$. Let $h_0=h|V_0: V_0
\rightarrow \partial X$. Then
$\mathrm{deg}(h_0)=\mathrm{deg}(h)=\mathrm{deg}(h')=-1$. Let
$E_0=f^{-1}(V_0)$. Since $f$ is bijective on $D$ and $E_0 \subset E
\subset \partial D$, it follows that $E_0 \subset f^{-1}(V) \subset
f^{-1}(\stackrel{\circ}W)=\stackrel{\circ}E$, and $E_0$ is an
$(n-1)$-ball. Let $f_0=f|E_0: E_0 \rightarrow \partial X$. Then
$f_0$ is an injection, $f_0(E_0)=V_0$, and
$\mathrm{deg}(f_0)=\mathrm{deg}(f_E)$. Let $g=h_0 \circ f_0: E_0
\rightarrow \partial X$. Then $g(E_0)=h_0(V_0)=E$, and \vspace{-1mm}
$$\mathrm{deg}(g)=\mathrm{deg}(h_0) \cdot \mathrm{deg}(f_0)=
\mathrm{deg}(h_0) \cdot \mathrm{deg}(f_E)=-\mathrm{deg}(f_E).$$

It follows from Theorem \ref{bing} that  there is a retraction $\alpha: Z \rightarrow Y$ such that
$\alpha(Z-Y) \subset \partial Y$. Define $\psi: \partial
X-\stackrel{\circ}Y \rightarrow Z$ by $\psi|V=h|V$ and
$\psi|(Z-\stackrel{\circ}Y)=id$. Let $f_Y=f|Y: Y \rightarrow
\partial X-\stackrel{\circ}Y$. Let $\xi=\alpha \circ \psi \circ f_Y: Y \rightarrow
Y$. Since $\psi \circ f_Y(E_0)=\psi \circ
f_0(E_0)=\psi(V_0)=h(V_0)=E$, we have $\xi(E_0)=\alpha \circ \psi
\circ f_Y(E_0)=\alpha(E)=E \subset \stackrel{\circ}Y$, and
$\xi|E_0=\psi \circ f|E_0=h \circ f|E_0=g: E_0 \rightarrow Y \subset
X$. Since $f_Y$ is bijective on $E_0$, $\psi$ is bijective on
$V_0=f_Y(E_0)$, and $\alpha$ is bijective on $\psi \circ
f_Y(E_0)=\psi(V_0)=E$, it follows that $\xi$ is bijective on $E_0$,
and $\mathrm{deg}(\xi|E_0)=\mathrm{deg}(g)$.

\vspace{3mm} Since $f_Y(\partial Y)=f(\partial Y) \subset \partial Y
\subset \stackrel{\circ}Z-\stackrel{\circ}Y$, we have $\psi \circ
f_Y|\partial Y=f|\partial Y$ and $\psi \circ f_Y(\partial
Y)=f(\partial Y) \subset \partial Y$, which with $\alpha|\partial
Y=id$ imply $\xi(\partial Y)=\alpha \circ \psi \circ f_Y(\partial Y)
\subset \partial Y$ and $\xi|\partial Y=\alpha \circ \psi \circ
f_Y|\partial Y=\psi \circ f_Y|\partial Y=f_Y|\partial Y=f|_{\partial
Y}: \partial Y \rightarrow \partial Y$. Hence, by Lemmas \ref{deg-bound} and
\ref{equivalence} we have \vspace{-1mm}
$$\mathrm{deg}(f_{\partial Y})=\mathrm{deg}(\xi|\partial Y)=\mathrm{deg}(\xi)=\mathrm{deg}(\xi|E_0).$$

To sum up, we obtain $\mathrm{deg}(f_{\partial
Y})=\mathrm{deg}(\xi|E_0)=\mathrm{deg}(g)=-\mathrm{deg}(f_E)$. Thus the Claim
1 is proved.

\vspace{3mm} Note that the Claim 1 with the conclusion (2) of
Proposition \ref{degree-img} implies the conclusion (2) of Proposition \ref{deg-int}.
\end{proof}

\begin{thm} \label{main2'}Let $X$ be an $n$-ball
in $\mathbb{R}^n$, $n \geq 2$, $Y$ be an $(n-1)$-ball in $\partial
X$, and $D$ be an $n$-ball in $X$ such that $E\equiv
\partial D \cap \partial X$ is an $(n-1)$-ball in
$\stackrel{\circ}Y$. Let $f: X \rightarrow \mathbb{R}^n$ be a
continuous map such that $f$ is bijective on $D$, and
$f^{-1}(\partial X-Y)=\stackrel{\circ}Y$. Let $f_D=f|D: D
\rightarrow \mathbb{R}^n$ and $f_{\partial Y}=f|\partial Y: \partial
Y \rightarrow \partial Y$. If
$\mathrm{deg}(f_D)=-\mathrm{deg}(f_{\partial Y})$,  then $f$ has a fixed point.
\end{thm}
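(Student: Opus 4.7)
My plan is to deduce the conclusion from Theorem \ref{ball} via Proposition \ref{deg-int}. Since the hypothesis $f^{-1}(\partial X-Y)=\stackrel{\circ}Y$ already matches the first hypothesis of Theorem \ref{ball}, it suffices to produce a point $v \in \stackrel{\circ}Y$ and an open neighborhood $W_v$ of $v$ in $X$ with $f(W_v) \subset X$. I will find such a $W_v$ inside a thin collar of $E$ in $D$.

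The first step is to pass to an $n$-ball $D_0 \subset D$ suited to Proposition \ref{deg-int}. Since $E \subset \stackrel{\circ}Y$ and $f^{-1}(\partial X-Y)=\stackrel{\circ}Y$, one has $f(E) \subset \partial X-Y$, so $f(E)\cap Y=\emptyset$. As $Y$ is compact and $f$ is continuous, $E$ has an open neighborhood $U$ in $X$ with $f(U)\cap Y=\emptyset$. Inside $D \cap U$ I would take a thin collar $D_0$ of $E$: an $n$-ball with $D_0\cap\partial X=\partial D_0\cap\partial X=E$ and such that every point of $\stackrel{\circ}E$ has an $X$-neighborhood contained in $D_0$. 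Then $D_0-E \subset X-\stackrel{\circ}Y$, so $f^{-1}(\partial X-Y)=\stackrel{\circ}Y$ gives $f(D_0-E)\cap(\partial X-Y)=\emptyset$; combined with $f(D_0)\cap Y=\emptyset$, this yields $f(D_0-E)\cap\partial X=\emptyset$, hence $f(D_0)\cap\partial X=f(E)$.

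The remaining hypotheses of Proposition \ref{deg-int} for the pair $(D_0, E)$ now all hold: $f(Y)\subset\partial X-\stackrel{\circ}Y$ follows from $f(\stackrel{\circ}Y)\subset\partial X-Y$ together with $f(\partial Y)\subset\partial Y$, and $f$ is bijective on $D_0$ because $D_0 \subset D$. Lemma \ref{uniform}(2) gives $\mathrm{deg}(f_{D_0})=\mathrm{deg}(f_D)=-\mathrm{deg}(f_{\partial Y})$, so Proposition \ref{deg-int}(2) rules out $f(D_0-E)\subset\mathbb{R}^n-X$ and forces $f(D_0)\subset X$. Picking any $v \in \stackrel{\circ}E \subset \stackrel{\circ}Y$ and the $X$-neighborhood $W_v\subset D_0$ provided by the collar construction, we have $f(W_v) \subset f(D_0) \subset X$, and Theorem \ref{ball} delivers the fixed point. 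The main difficulty is the shrinking step: Proposition \ref{deg-int} requires $f(D)\cap\partial X=f(E)$, and nothing in the raw hypotheses prevents $f(D-E)$ from meeting $Y$; the collar $D_0$ must be thin enough to keep $f(D_0)$ away from $Y$ while still being thick enough to contain an $X$-neighborhood of some interior point of $E$.
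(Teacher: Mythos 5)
Your argument is correct and follows the same overall route as the paper: verify the hypotheses of Proposition \ref{deg-int}, use $\mathrm{deg}(f_D)=-\mathrm{deg}(f_{\partial Y})$ together with $\mathrm{deg}(f_D)\in\{1,-1\}$ to exclude the alternative $f(D_0-E)\subset\Bbb R^n-X$, and feed the resulting neighborhood of a point of $\stackrel{\circ}E$ into Theorem \ref{ball}. The one place you diverge --- shrinking $D$ to a thin collar $D_0$ of $E$ with $f(D_0)\cap Y=\emptyset$ --- is in fact a needed repair rather than an extra precaution: the paper passes from $f(D-E)\cap(\partial X-Y)=\emptyset$ directly to $f(D)\cap\partial X=f(E)$, but the hypothesis $f^{-1}(\partial X-Y)=\stackrel{\circ}Y$ puts no constraint on $f^{-1}(Y)$, so nothing stated prevents $f(D-E)$ from meeting $Y$, and Proposition \ref{deg-int} genuinely requires $f(D)\cap\partial X=f(E)$ (both for the dichotomy in its part (1) and for its degree computation). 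Two small points to tighten. First, the identity $\mathrm{deg}(f|D_0)=\mathrm{deg}(f_D)$ should be cited from Lemma \ref{restrict}; Lemma \ref{uniform}(2) concerns $\infty$-extensible self-maps of $\Bbb R^n$, which your $f:X\to\Bbb R^n$ is not. Second, the existence of the collar $D_0$ containing an $X$-neighborhood of some $v\in\stackrel{\circ}E$ deserves a word: pull back the explicit radial collar of $S^{n-1}$ in $B^n$ through a homeomorphism $B^n\to D$ to get an $n$-ball $D_0=c(E\times[0,\varepsilon])$ with $D_0\cap\partial X=\partial D_0\cap\partial X=E$ and $D_0\subset D\cap U$ (tube lemma), and then use the observation --- also needed, but unstated, for the paper's own choice of $W_v=\stackrel{\circ}D\cup\stackrel{\circ}E$ --- that $\stackrel{\circ}{D_0}\cup\stackrel{\circ}E$ is open in $X$, which follows by invariance of domain since $\partial D_0-\stackrel{\circ}E$ is closed and misses $\stackrel{\circ}E$.
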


\begin{proof} In the proof of Theorem \ref{ball}, we have
indicated that the condition $f^{-1}(\partial
X-Y)=\stackrel{\circ}Y$ is equivalent to $f(\stackrel{\circ}Y)
\subset \partial X-Y$, $f(\partial Y) \subset
\partial Y$, and $f(X-\stackrel{\circ}Y) \cap (\partial
X-Y)=\emptyset$. Thus $f(D-E) \cap (\partial X-Y) \subset
f(X-\stackrel{\circ}Y) \cap (\partial X-Y)=\emptyset$, and hence
$f(D) \cap \partial X=f(E) \subset f(\stackrel{\circ}Y) \subset
\partial X-Y$. By Proposition \ref{deg-int}, we have $f(D-E) \subset
\stackrel{\circ}X$. Take a point $v \in \stackrel{\circ}E$ and put
$W_v=\stackrel{\circ}D \cup \stackrel{\circ}E$. Then $W_v$ is an
open neighborhood of $v$ in $X$, and $f(W_v) \subset X$. Therefore,
by Theorem \ref{ball}, $\mathrm{Fix}(f) \neq \emptyset$.
\end{proof}

\begin{thm} \label{injec} Let $X$ be an $n$-ball
in $\mathbb{R}^n$, $n \geq 2$, $Y$ be an $(n-1)$-ball in $\partial
X$, and $f: X \rightarrow \mathbb{R}^n$ be a continuous injection
such that $f(Y)=\partial X-\stackrel{\circ}Y$. Let $f_{\partial
Y}=f|\partial Y: \partial Y \rightarrow \partial Y$. If
$\mathrm{deg}(f)=-\mathrm{deg}(f_{\partial Y})$, then $f$ has a
fixed point.
\end{thm}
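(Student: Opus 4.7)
The plan is to deduce this theorem directly from Theorem~\ref{main2'} (which is Theorem~\ref{main2}) by producing a suitable $n$-ball $D$ inside $X$ and then verifying all its hypotheses. The first task is to convert the condition $f(Y)=\partial X-\stackrel{\circ}Y$ into the preimage condition $f^{-1}(\partial X-Y)=\stackrel{\circ}Y$ required by Theorem~\ref{main2'}. Since $Y$ is compact and $f|Y$ is a continuous injection into the Hausdorff space $\mathbb{R}^n$, the restriction $f|Y:Y\to f(Y)=\partial X-\stackrel{\circ}Y$ is a homeomorphism. Consequently $\partial X-\stackrel{\circ}Y$ is also an $(n-1)$-ball, its topological boundary in $\partial X$ equals $\partial Y$, so $f(\partial Y)=\partial Y$ and $f(\stackrel{\circ}Y)=\partial X-Y$. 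Injectivity of $f$ on all of $X$ then forbids any point of $X-Y$ from being mapped into $f(Y)\supset \partial X-Y$, which gives the desired equation $f^{-1}(\partial X-Y)=\stackrel{\circ}Y$.

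Next I would fix any point $v\in\stackrel{\circ}Y$ and construct a small $n$-ball $D\subset X$ containing $v$ in its interior such that $E\equiv\partial D\cap\partial X$ is an $(n-1)$-ball contained in $\stackrel{\circ}Y$. Using any homeomorphism $h:B^n\to X$ together with a standard half-space chart for $B^n$ at the boundary point $h^{-1}(v)\in S^{n-1}$, one takes a small half-ball around $h^{-1}(v)$ whose flat face lies in $h^{-1}(\stackrel{\circ}Y)$, and then pushes it forward by $h$ to obtain $D$. Since $f$ is injective on $X$, it is automatically bijective on $D$; and by Lemma~\ref{restrict} applied to the injection $f:X\to\mathbb{R}^n$ with the sub-ball $D\subset X$, we have $\mathrm{deg}(f_D)=\mathrm{deg}(f|D)=\mathrm{deg}(f)$. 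Combined with the hypothesis $\mathrm{deg}(f)=-\mathrm{deg}(f_{\partial Y})$, this delivers $\mathrm{deg}(f_D)=-\mathrm{deg}(f_{\partial Y})$.

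With all these verifications in place, every hypothesis of Theorem~\ref{main2'} is satisfied, and we conclude that $f$ has a fixed point. The only step that requires any real attention is the local construction of the half-ball $D$, but this is a routine manoeuvre using the manifold-with-boundary structure of the topological $n$-ball $X$ at interior points of $Y$; notably, it does not require $X$ or $Y$ to be benign in the sense of Remark~\ref{rem1}, which is important since the theorem must cover wild embeddings such as those bounded by the Alexander horned sphere.
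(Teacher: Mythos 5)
Your proposal is correct and follows essentially the same route as the paper's own proof: choose a small $n$-ball $D$ meeting $\partial X$ in an $(n-1)$-ball inside $\stackrel{\circ}Y$, use injectivity to get bijectivity on $D$ and $f^{-1}(\partial X-Y)=\stackrel{\circ}Y$, apply Lemma~\ref{restrict} to get $\mathrm{deg}(f_D)=\mathrm{deg}(f)$, and invoke Theorem~\ref{main2'}. You merely supply details (the homeomorphism argument for $f(\partial Y)=\partial Y$ and the explicit half-ball construction of $D$) that the paper leaves implicit.
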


\begin{proof} Let $D$ be an $n$-ball in $X$ such
that $\partial D \cap \partial X$ is an $(n-1)$-ball in
$\stackrel{\circ}Y$. Since $f$ is an injection, $f$ is bijective on
$D$ and $f^{-1}(\partial X-Y)=\stackrel{\circ}Y$. Let $f_D=f|D: D \rightarrow \mathbb{R}^n$. By Lemma \ref{restrict},
we have $\mathrm{deg}(f)=\mathrm{deg}(f_D)$. Hence, by Theorem \ref{main2'}, $f$ has a fixed point.
\end{proof}

Though the following corollary is implied by the Brouwer Lemma: if $f: \mathbb{R}^2
\rightarrow \mathbb{R}^2$ is an orientation preserving
homeomorphism  and has a periodic point, then it has a fixed point \cite{Bro12b},
it is still interesting to give a proof using the theorems here.

\begin{cor} Let $Y$ be an arc in
$\mathbb{R}^2$, $\partial Y=\{u, v\}$, and $f: \mathbb{R}^2
\rightarrow \mathbb{R}^2$ be an orientation preserving homeomorphism
such that $f(u)=v$, $f(v)=u$, and $f(Y) \cap Y =\partial Y$. Then
$f$ has a fixed point.
\end{cor}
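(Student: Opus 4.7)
The plan is to construct a $2$-ball $X$ whose boundary is the Jordan curve $Y\cup f(Y)$, and then verify that the hypotheses of Theorem \ref{injec} hold for the restriction of $f$ to $X$, with $Y$ playing the role of the $(n-1)$-ball in $\partial X$.

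First, I would show that $Y\cup f(Y)$ is a Jordan curve in $\mathbb R^2$. Indeed, $Y$ is an arc from $u$ to $v$, while $f(Y)$ is the image of $Y$ under the homeomorphism $f$, hence also an arc, with endpoints $\{f(u),f(v)\}=\{v,u\}$. By the hypothesis $f(Y)\cap Y=\partial Y=\{u,v\}$, the two arcs share only these endpoints, so concatenating a parametrization of $Y$ from $u$ to $v$ with a parametrization of $f(Y)$ from $v$ back to $u$ gives a continuous injection of $S^1$ with image $Y\cup f(Y)$. By the Jordan--Sch\"onflies theorem I let $X$ be the closure of the bounded component of $\mathbb R^2-(Y\cup f(Y))$; then $X$ is a $2$-ball with $\partial X=Y\cup f(Y)$, and $Y$ is a $1$-ball in $\partial X$.

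Next I would check the remaining hypotheses of Theorem \ref{injec}. Since $\{u,v\}\subset f(Y)$ (because $f$ swaps $u$ and $v$), one computes
\[
\partial X-\stackrel{\circ}Y=(Y\cup f(Y))-(Y-\{u,v\})=f(Y)\cup\{u,v\}=f(Y),
\]
so $f(Y)=\partial X-\stackrel{\circ}Y$. The restriction $f|X\colon X\to\mathbb R^2$ is a continuous injection since $f$ is a homeomorphism of $\mathbb R^2$. The map $f_{\partial Y}\colon\{u,v\}\to\{u,v\}$ swaps $u$ and $v$, so by Remark 2.8 it is orientation reversing, i.e.\ $\mathrm{deg}(f_{\partial Y})=-1$. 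On the other hand, $f$ is a homeomorphism of $\mathbb R^2$, hence proper and so $\infty$-extensible in the sense of Definition \ref{inf-ext}; the assumption that $f$ is orientation preserving therefore gives $\mathrm{deg}(f)=1$. Applying Lemma \ref{uniform} to the $\infty$-extensible map $f$ and the $2$-ball $X$ on which $f$ is (globally) bijective, we get $\mathrm{deg}(f|X)=\mathrm{deg}(f)=1=-\mathrm{deg}(f_{\partial Y})$.

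All the hypotheses of Theorem \ref{injec} are then satisfied for $f|X\colon X\to\mathbb R^2$, and so $f|X$ (and therefore $f$) has a fixed point. I do not foresee any serious obstacle: the only slightly non-cosmetic step is verifying that $\partial X=Y\cup f(Y)$ really equals the Jordan curve obtained from the two arcs, which relies solely on $f(Y)\cap Y=\{u,v\}$ and the identification of the degrees via Lemma \ref{uniform}.
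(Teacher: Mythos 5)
Your proposal is correct and follows essentially the same route as the paper's own proof: construct the disc $X$ with $\partial X=Y\cup f(Y)$, observe $\mathrm{deg}(f|X)=\mathrm{deg}(f)=1$ and $\mathrm{deg}(f_{\partial Y})=-1$, and apply Theorem \ref{injec}. You merely supply details the paper leaves implicit (the Jordan--Sch\"onflies step and the appeal to Lemma \ref{uniform}), and these are all sound.
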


\begin{proof} Clearly, there is a disc $X$ in
$\mathbb{R}^2$ such that $\partial X=f(Y) \cup Y$. Let $f_X=f|X: X
\rightarrow \mathbb{R}^2$ and $f_{\partial Y}=f|\partial Y:
\partial Y \rightarrow \partial Y$. Then $\mathrm{deg}(f_X)=\mathrm{deg}(f)=1$ and
$\mathrm{deg}(f_{\partial Y})=-1$. Therefore, by Theorem \ref{injec}, $f$
has a fixed point.
\end{proof}

\subsection*{Acknowledgements}
Jiehua Mai and Fanping Zeng are supported by NNSF of China (Grant
No. 12261006) and Project of Guangxi First Class Disciplines of
Statistics (No. GJKY-2022-01); Enhui Shi is supported by NNSF of
China (Grant No. 12271388); Kesong Yan is supported by NNSF of China
(Grant No. 12171175).

\end{document}